\newtheorem{theorem}{Theorem}[section]
\newtheorem{corollary}[theorem]{Corollary}
\newtheorem{definition}[theorem]{Definition}
\newtheorem{lemma}[theorem]{Lemma}
\newtheorem{proposition}[theorem]{Proposition}
\theoremstyle{theremark}
\newtheorem{remark}[theorem]{Remark}
\numberwithin{equation}{section}
\DeclareMathAlphabet\mathbfcal{OMS}{cmsy}{b}{n}
\newcommand{\abs}[1]{\left| #1 \right|}
\newcommand{\norm}[1]{\| #1 \|}
\newcommand{\Bignorm}[1]{\Big\| #1 \Big\|}
\newcommand{\ang}[2]{ \langle #1 , #2  \rangle}
\newcommand{\bigang}[2]{ \big< #1 , #2  \big>}
\newcommand{\scp}[2]{ \left( #1 , #2  \right)}
\newcommand{\bigscp}[2]{\big( #1 , #2 \big)}
\newcommand{\meano}[1]{{\langle #1 \rangle}_{\Omega}}
\newcommand{\meang}[1]{{\langle #1 \rangle}_{\Gamma}}
\newcommand{\mean}[2]{\textnormal{mean}\scp{#1}{#2}}
\newcommand{\R}{\mathbb R}
\newcommand{\N}{\mathbb N}
\newcommand{\n}{\mathbf{n}}
\newcommand{\intO}{\int_\Omega}
\newcommand{\intG}{\int_\Gamma}
\newcommand{\dtau}{\;\mathrm d\tau}
\newcommand{\dx}{\;\mathrm{d}x}
\newcommand{\dt}{\;\mathrm dt}
\newcommand{\ds}{\;\mathrm ds}
\newcommand{\dxs}{\;\mathrm{d}x\;\mathrm{d}s}
\newcommand{\dGs}{\;\mathrm{d}\Ga\;\mathrm{d}s}
\newcommand{\dG}{\;\mathrm d\Ga}
\newcommand{\ddt}{\frac{\mathrm d}{\mathrm dt}}
\newcommand{\delt}{\partial_{t}}
\newcommand{\delth}{\partial_{t}^{h}}
\newcommand{\deln}{\partial_\n}
\newcommand{\Grad}{\nabla}
\newcommand{\Lap}{\Delta}
\newcommand{\Div}{\textnormal{div}}
\newcommand{\Gradg}{\nabla_\Ga}
\newcommand{\Lapg}{\Delta_\Ga}
\newcommand{\Divg}{\textnormal{div}_\Ga}
\newcommand{\emb}{\hookrightarrow}
\newcommand{\suchthat}{\;\ifnum\currentgrouptype=16 \middle\fi|\;}
\newcommand{\e}{\mathbf{e}}
\newcommand{\Om}{\Omega}
\newcommand{\Ga}{\Gamma}
\definecolor{rosso}{rgb}{0.8,0,0}
\definecolor{violet}{rgb}{0.65,0,0.65}
\definecolor{darkgreen}{rgb}{0,0.5,0}
\def \no#1#2#3 {{\bf #1} (#3), #2.}
\def \eds#1#2#3 {#1, #2, #3.}
\def\@settitle{\begin{center}%
  \baselineskip14\p@\relax
    %\bfseries
    %\normalfont
    \huge%<- NEW
%\uppercasenonmath\@title
  \@title
  \end{center}%
}
\begin{document}

\title[Bulk-surface Cahn--Hilliard model with non-degenerate mobility]
{\emph{Well-posedness and long-time behavior of a bulk-surface Cahn--Hilliard model with non-degenerate mobility}}
\author[Jonas Stange]{Jonas Stange}

% \address{Politecnico di Milano\\
% Dipartimento di Matematica\\
% Via E. Bonardi 9, 20133 Milano, Italy\\
% \href{mailto:andrea.giorgini@polimi.it}{andrea.giorgini@polimi.it}}

\address{Fakult\"{a}t f\"{u}r Mathematik\\ 
Universit\"{a}t Regensburg \\
93040 Regensburg, Germany\\
\href{mailto:jonas.stange@ur.de}{jonas.stange@ur.de}
}

\subjclass[2020]{35K35, 35D30, 35A01, 35A02, 35Q92, 35B65}

% 35K35, %%% Initial-boundary value problems for higher-order parabolic equations
% 35D30, %%% Weak solutions to PDEs
% 35A01, %%% Existence problems for PDEs: global existence, local existence, non-existence
% 35A02, %%% Uniqueness problems for PDEs: global uniqueness, local uniqueness, non-uniqueness
% 35Q92, %%% PDEs in connection with biology, chemistry and other natural sciences
% 35B65. %%% Smoothness and regularity of solutions to PDEs
%

%
\keywords{Cahn--Hilliard equation, bulk-surface interaction, dynamic boundary conditions, non-degenerate mobility, uniqueness, regularity, convergence to steady states.}

%\date{\today}

\begin{abstract}
We study a bulk-surface Cahn--Hilliard model with non-degenerate mobility and singular potentials in two dimensions. Following the ideas of the recent work by Conti, Galimberti, Gatti, and Giorgini [Calc. Var. Partial Differential Equations, 64(3):Paper No. 87, 32, 2025] for the Cahn--Hilliard equation with homogeneous Neumann boundary conditions, we show the uniqueness of weak solutions together with a continuous dependence estimate for sufficiently regular mobility functions. Next, under weaker assumptions on the mobility functions, we show the existence of a weak solution that exhibits the propagation of uniform-in-time regularity and satisfies the instantaneous separation property. Lastly, we consider the long-time behavior and prove that the unique weak solution converges to a solution of the stationary bulk-surface Cahn--Hilliard equation. Our approach for the uniqueness proof relies on a new well-posedness and regularity theory for a bulk-surface elliptic system with non-constant coefficients, which may be of independent interest.
\end{abstract}

\maketitle

\section{Introduction}
\label{SECT:INTRO}
\noindent
In this paper, we investigate the following bulk-surface Cahn--Hilliard model
\begin{center}
\begin{subequations}\label{EQ:SYSTEM}
    \begin{align}
        \label{EQ:SYSTEM:1}
        &\delt\phi = \Div(m_\Om(\phi)\Grad\mu) && \text{in} \ \Om\times(0,\infty), \\
        \label{EQ:SYSTEM:2}
        &\mu = -\Lap\phi + F'(\phi)   && \text{in} \ \Om\times(0,\infty), \\
        \label{EQ:SYSTEM:3}
        &\delt\psi = \Divg(m_\Ga(\psi)\Gradg\theta) - \beta m_\Om(\phi)\deln\mu && \text{on} \ \Ga\times(0,\infty), \\
        \label{EQ:SYSTEM:4}
        &\theta = - \Lapg\psi + G'(\psi) + \alpha\deln\phi && \text{on} \ \Ga\times(0,\infty), \\
        \label{EQ:SYSTEM:5}
        &\begin{cases} K\deln\phi = \alpha\psi - \phi &\text{if} \ K\in [0,\infty), \\
        \deln\phi = 0 &\text{if} \ K = \infty
        \end{cases} && \text{on} \ \Ga\times(0,\infty), \\
        \label{EQ:SYSTEM:6}
        &\begin{cases} 
        L m_\Om(\phi)\deln\mu = \beta\theta - \mu &\text{if} \  L\in[0,\infty), \\
        m_\Om(\phi)\deln\mu = 0 &\text{if} \ L=\infty
        \end{cases} &&\text{on} \ \Ga\times(0,\infty), \\
        \label{EQ:SYSTEM:7}
        &\phi\vert_{t=0} = \phi_0 &&\text{in} \ \Om, \\
        \label{EQ:SYSTEM:8}
        &\psi\vert_{t=0} = \psi_0 &&\text{on} \ \Ga.
    \end{align}
\end{subequations}
\end{center}
System \eqref{EQ:SYSTEM} is a special case of the model proposed in \cite{Knopf2024}. There, the authors considered an extended version of \eqref{EQ:SYSTEM} with convection.

In \eqref{EQ:SYSTEM}, $\Om\subset\R^2$ is a bounded domain with boundary $\Ga\coloneqq\partial\Om$. We use the abbreviation $Q = \Om\times(0,\infty)$ and $\Sigma = \Ga\times(0,\infty)$. The outward pointing unit normal vector on $\Ga$ is denoted by $\n$, while $\deln$ denotes the outward normal derivative on the boundary. Moreover, the symbols $\Gradg$ and $\Lapg$ stand for the surface gradient and the Laplace--Beltrami operator on $\Ga$, respectively.

The functions $\phi:Q\rightarrow\R$ and $\mu:Q\rightarrow\R$ denote the phase-field and the chemical potential in the bulk, whereas $\psi:\Sigma\rightarrow\R$ and $\theta:\Sigma\rightarrow\R$ represent the phase-field and the chemical potential on the boundary, respectively. The functions $m_\Om,m_\Ga:[-1,1]\rightarrow\R$ are the so-called Onsager mobilities, and typically depend on the phase-field variables $\phi$ and $\psi$, respectively. They model the spatial locations and intensity at which the diffusion processes take place.

In \eqref{EQ:SYSTEM}, the time evolution of the bulk-variables $\phi$ and $\mu$ is governed by the \textit{bulk Cahn--Hilliard subsystem} \eqref{EQ:SYSTEM:1}-\eqref{EQ:SYSTEM:2}, while the evolution of the surface quantities $\psi$ and $\theta$ is given by the \textit{surface Cahn--Hilliard subsystem} \eqref{EQ:SYSTEM:3}-\eqref{EQ:SYSTEM:4}, which is coupled to the bulk by expressions involving the normal derivatives $\deln\phi$ and $\deln\mu$. Moreover, the phase-field $\phi$ and $\psi$ are coupled by the boundary condition \eqref{EQ:SYSTEM:5}, while the chemical potentials $\mu$ and $\theta$ are coupled by the boundary condition \eqref{EQ:SYSTEM:6}. Here, the parameters $K,L\in[0,\infty]$ are used to distinguish different types of these coupling conditions and $\alpha,\beta\in\R$ describe different physical phenomena, see, for instance, \cite{Giorgini2023, Knopf2024} for a more extensive description. These types of boundary conditions fall into the class of \textit{dynamical boundary conditions}, which generalize the classical homogeneous Neumann boundary conditions
\begin{align*}
    \deln\phi = \deln\mu = 0\qquad\text{on~}\Ga\times(0,\infty),
\end{align*}
typically imposed in standard Cahn--Hilliard models. Although Neumann boundary conditions are widely used, they can be overly restrictive when a precise description of boundary dynamics is required, see, e.g., \cite{Giorgini2023, Knopf2024}. This has led to the development and analysis of several dynamic boundary condition formulations in the literature, we refer to the recent survey \cite{Wu2022} and the references therein for a comprehensive overview. In particular, dynamic boundary conditions of Cahn–Hilliard type that incorporate mass exchange between bulk and boundary have received considerable attention in recent years, see, e.g., \cite{Goldstein2011, Liu2019, Knopf2020, Knopf2021a}. This context motivates the study of the coupled bulk-surface system \eqref{EQ:SYSTEM}.

The functions $F^\prime$ and $G^\prime$ are the derivatives of double-well potentials $F$ and $G$, respectively. A physically motivated example of such a double-well potential, especially in applications related to material science, is the \textit{Flory-Huggins potential}, which is also referred to as the \textit{logarithmic potential.} It is given as
\begin{align*}
    W_{\mathrm{log}}(s) \coloneqq \frac{\Theta}{2}\Big[(1+s)\ln(1+s) + (1-s)\ln(1-s)\Big] - \frac{\Theta_0}{2}s^2, \qquad s\in[-1,1],
\end{align*}
with the convention that $0\ln 0$ is interpreted as zero. The positive parameters $\Theta$ and $\Theta_0$ denote the temperature of the mixture and the critical temperature below which phase separation processes occur, respectively, and are supposed to satisfy $\Theta_0 - \Theta > 0$. Since $W_{\mathrm{log}}^\prime(s) \rightarrow \pm\infty$ as $s\rightarrow\pm 1$, the potential $W_{\mathrm{log}}$ is a so-called singular potential. In this contribution, we consider a more general class of singular potentials (see Section~\ref{SUBSEC:ASS}) such that, for example, the choice $F = G = W_{\mathrm{log}}$ is admissible.

The free energy functional associated with system \eqref{EQ:SYSTEM} reads as
\begin{align}
	\label{INTRO:ENERGY}
	\begin{split}
		E(\phi,\psi) &= \intO \frac12\abs{\Grad\phi}^2 + F(\phi) \dx + \intG \frac12\abs{\Gradg\psi}^2 + G(\psi) \dG \\
		&\quad + \chi(K)\intG\frac{1}{2}\abs{\alpha\psi - \phi}^2\dG.
	\end{split}
\end{align}
Here, to account for the different cases corresponding to the choice of $K$, the function
\begin{align*}
	\chi:[0,\infty]\rightarrow[0,\infty), \quad \chi(r) \coloneqq
	\begin{cases}
		r^{-1}, &\text{if } r\in (0,\infty), \\
		0, &\text{if } r\in\{0,\infty\}
	\end{cases}
\end{align*}
is used.
We observe that sufficiently regular solutions of the system \eqref{EQ:SYSTEM} satisfy the \textit{mass conservation law}
\begin{align}
	\label{INTRO:MASS}
	\begin{dcases}
		\beta\intO \phi(t)\dx + \intG \psi(t)\dG = \beta\intO \phi_0 \dx + \intG \psi_0\dG, &\textnormal{if } L\in[0,\infty), \\
		\intO\phi(t)\dx = \intO\phi_0\dx \quad\textnormal{and}\quad \intG\psi(t)\dG = \intG\psi_0\dG, &\textnormal{if } L = \infty
	\end{dcases}
\end{align}
for all $t\in[0,\infty)$ and the \textit{energy identity}
\begin{align}
	\label{INTRO:ENERGY:ID}
	\begin{split}
		\ddt E(\phi,\psi)  &= - \intO m_\Om(\phi)\abs{\Grad\mu}^2\dx
		- \intG m_\Ga(\psi)\abs{\Gradg\theta}^2\dG
		% \\
		% &\qquad 
		- \chi(L) \intG (\beta\theta-\mu)^2\dG 
	\end{split}
\end{align}
on $[0,\infty)$. We note that the right-hand side of \eqref{INTRO:ENERGY:ID} is non-positive, which means that the energy dissipates over time, and the terms appearing on the right-hand side of \eqref{INTRO:ENERGY:ID} can be interpreted as the dissipation rate.

\textbf{Goals and novelties of this paper.} 
System~\eqref{EQ:SYSTEM} has been extensively studied in the literature. We refer, for instance, to \cite{Goldstein2011,Liu2019,Knopf2021a,Knopf2020,Colli2020,Fukao2021,Miranville2020,Lv2024a,Lv2024b}. An extended model with convection was recently analyzed in \cite{Knopf2024} for regular potentials, and in \cite{Giorgini2025, Knopf2025} for singular potentials. However, existing analytical results on uniqueness and higher regularity have so far required the mobility functions $m_\Om$ and $m_\Ga$ to be constant. This represents a significant limitation, as in many physical applications, the diffusion intensity may vary spatially and is not expected to be uniform throughout the domain. For the classical Cahn--Hilliard equation with homogeneous Neumann boundary conditions, the recent work \cite{Conti2025} establishes the uniqueness and the propagation regularity of weak solutions for non-degenerate mobility functions in two dimensions, and thus improved the state of the art dating back to the work \cite{Barrett1999}. We further would like to mention the recent work \cite{Elliott2024}, where the authors established a weak-strong uniqueness principle for the Cahn--Hilliard equation on an evolving surface. Building on the approach developed in \cite{Conti2025}, we extend the uniqueness theory to the Cahn--Hilliard equation with dynamic boundary conditions \eqref{EQ:SYSTEM}. To the best of our knowledge, this is the first result addressing uniqueness and propagation of regularity in the setting of dynamic boundary conditions with non-constant mobilities.
Our proof relies on two main ingredients: a novel well-posedness and regularity theory for a bulk-surface elliptic system with non-constant coefficients (see Section~\ref{Section:EBS}), as well as a recently established result on higher time-regularity for the phase-fields (see \cite[Theorem~3.3]{Giorgini2025}). Then, using our proof, particularly the key differential inequality established therein, we are able to demonstrate the propagation of regularity for weak solutions. To this end, we introduce an additional regularization procedure for the mobility functions, which allows us to work under minimal assumptions on their regularity. As a consequence, we also obtain the instantaneous separation property. This separation property, in turn, enables us to prove convergence to a single stationary state by means of the standard {\L}ojasiewicz--Simon approach. It is worth mentioning that our results can be readily adapted to the case of the convective model proposed in \cite{Knopf2024} for sufficiently regular prescribed velocity fields. This might be useful in further analysis of related models for two-phase flows with bulk-surface interaction, see, e.g., \cite{Giorgini2023, Gal2024} for a bulk-surface Navier--Stokes--Cahn--Hilliard model or \cite{Knopf2025a} for a bulk-surface Navier--Stokes--Cahn--Hilliard model in an evolving domain.

\textbf{Structure of this paper.} The rest of this contribution is structured as follows. In Section~\ref{Section:Prelim}, we collect some notation, assumptions, preliminaries, and important tools. After introducing the notation of a weak solution of \eqref{EQ:SYSTEM}, we state our main results in Section~\ref{Section:Main}. In Section~\ref{Section:EBS}, we establish a new well-posedness and regularity theory for a bulk-surface elliptic system with non-constant coefficients. Then, Section~\ref{Section:Uniqueness} is devoted to the proof of the well-posedness of weak solutions to \eqref{EQ:SYSTEM}. Afterwards, in Section~\ref{Section:PropRegSep}, we show that there exists a weak solution that admits the propagation of regularity and satisfies the instantaneous separation property. Lastly, in Section~\ref{Section:LongTime}, we study the long-time behavior of the unique weak solution of \eqref{EQ:SYSTEM} and show its convergence to a single stationary state as $t\rightarrow\infty$.

\medskip

\noindent
\section{Functional framework, assumptions and preliminaries}
\label{Section:Prelim}
\subsection{Notation and Function Spaces.}
For any Banach space $X$, we denote its norm by $\norm{\cdot}_X$, its dual space by $X^\prime$, and the associated duality pairing of elements $\phi\in X^\prime$ and $\zeta\in X$ by $\ang{\phi}{\zeta}_X$. The space $L^p(I;X)$, $1\leq p \leq +\infty$, denotes the set of all strongly measurable $p$-integrable functions mapping from any interval $I\subset\R$ into $X$, or, if $p = +\infty$, essentially bounded functions. Moreover, the space $W^{1,p}(I;X)$ consists of all functions $f\in L^p(I;X)$ such that $\delt f\in L^p(I;X)$, where $\delt f$ denotes the vector-valued distributional derivative of $f$. Furthermore, $L^p_{\mathrm{uloc}}(I;X)$ denotes the space of functions $f\in L^p(I;X)$ such that
\begin{align*}
    \norm{f}_{L^p_{\mathrm{uloc}}(I;X)} \coloneqq \sup_{t\geq 0}\Big(\int_{I\cap [t,t+1)}\norm{f(s)}_X^p\ds\Big)^{\frac1p} < \infty.
\end{align*}
If $I\subset\R$ is a finite interval, we find that $L^p_{\mathrm{uloc}}(I;X) = L^p(I;X)$. Further, we denote the space of continuous functions mapping from $I$ to $X$ by $C(I;X)$.

Let $\Om\subset\R^d$, $d\in\{2,3\}$, be a bounded domain with sufficiently smooth boundary $\Ga \coloneqq \partial\Om$. For any $1 \leq p \leq \infty$ and $k\in\N_0$, the Lebesgue and Sobolev spaces for functions mapping from $\Om$ to $\R$ are denoted as $L^p(\Om)$ and $W^{k,p}(\Om)$, respectively. Here, we use $\N$ for the set of natural numbers excluding zero and $\N_0 \coloneqq \N\cup\{0\}$. For $1 \leq p \leq \infty$ and $s\geq 0$, we denote by $W^{s,p}(\Om)$ the Sobolev-Slobodeckij spaces. If $p = 2$, we write $H^s(\Om) = W^{s,2}(\Om)$. In particular, $H^0(\Om)$ can be identified with $L^2(\Om)$.
The Lebesgue, Sobolev, and Sobolev-Slobodeckij spaces on the boundary can be defined similarly, provided that $\Ga$ is sufficiently regular. As before, we write $H^s(\Ga) = W^{s,2}(\Ga)$ and identify $H^0(\Ga)$ with $L^2(\Ga)$.

Next, we introduce the product spaces
\begin{align*}
    \mathcal{L}^p \coloneqq L^p(\Om)\times L^p(\Ga), \quad\text{and}\quad \mathcal{W}^{s,p} \coloneqq W^{s,p}(\Om)\times W^{s,p}(\Ga),
\end{align*}
for any real numbers $s\geq 0$ and $p\in[1,\infty]$, provided that the boundary $\Ga$ is sufficiently regular. We abbreviate $\mathcal{H}^s \coloneqq \mathcal{W}^{s,2}$ and identify $\mathcal{L}^2$ with $\mathcal{H}^0$. Note that $\mathcal{H}^s$ is a Hilbert space with respect to the inner product
\begin{align*}
    \bigscp{\scp{\phi}{\psi}}{\scp{\zeta}{\xi}}_{\mathcal{H}^s} \coloneqq \scp{\phi}{\zeta}_{H^s(\Om)} + \scp{\psi}{\xi}_{H^s(\Ga)} \qquad\text{for all~} \scp{\phi}{\psi},\scp{\zeta}{\xi}\in\mathcal{H}^s
\end{align*}
and its induced norm $\norm{\cdot}_{\mathcal{H}^s} \coloneqq \scp{\cdot}{\cdot}_{\mathcal{H}^s}^{\frac12}$. We recall that the duality pairing can be expressed as
\begin{align*}
    \ang{\scp{\phi}{\psi}}{\scp{\zeta}{\xi}}_{\mathcal{H}^s} \coloneqq \scp{\phi}{\zeta}_{L^2(\Om)} + \scp{\psi}{\xi}_{L^2(\Ga)}
\end{align*}
for all $(\zeta,\xi)\in\mathcal{H}^s$ if $(\phi,\psi)\in\mathcal{L}^2$.
For $L\in[0,\infty]$ and $\beta\in\R$, we introduce the linear subspace
\begin{align*}
    \mathcal{H}_{L}^1 \coloneqq
    \begin{cases}
        \mathcal{H}^1, &\text{if } L \in (0,\infty] , \\
     \displaystyle   \{(\phi,\psi)\in\mathcal{H}^1 : \phi = \beta\psi \text{ a.e.~on } \Ga\}, &\text{if } L=0.
    \end{cases}
\end{align*}
The space $\mathcal{H}_{L}^1$ is a Hilbert space endowed with the inner product $\scp{\cdot}{\cdot}_{\mathcal{H}_{L}^1} \coloneqq \scp{\cdot}{\cdot}_{\mathcal{H}^1}$ and its induced norm.  Moreover, we define the product
\begin{align*}
    \ang{\scp{\phi}{\psi}}{\scp{\zeta}{\xi}}_{\mathcal{H}_{L}^1} \coloneqq \scp{\phi}{\zeta}_{L^2(\Om)} + \scp{\psi}{\xi}_{L^2(\Ga)}
\end{align*}
for all $\scp{\phi}{\psi}, \scp{\zeta}{\xi}\in\mathcal{L}^2$. By means of the Riesz representation theorem, this product can be extended to a duality pairing on $(\mathcal{H}_{L}^1)^\prime\times\mathcal{H}_{L}^1$, which will also be denoted as $\ang{\cdot}{\cdot}_{\mathcal{H}_{L}^1}$.

For $(\phi,\psi)\in(\mathcal{H}^1_L)^\prime$, we define the generalized bulk-surface mean
\begin{align*}
    \mean{\phi}{\psi} \coloneqq \frac{\ang{\scp{\phi}{\psi}}{\scp{\beta}{1}}_{\mathcal{H}^1_L}}{\beta^2\abs{\Om} + \abs{\Ga}},
\end{align*}
which reduces to
\begin{align*}
    \mean{\phi}{\psi} = \frac{\beta\abs{\Om}\meano{\phi} + \abs{\Ga}\meang{\psi}}{\beta^2\abs{\Om} + \abs{\Ga}}
\end{align*}
if $(\phi,\psi)\in\mathcal{L}^2$, where
\begin{align*}
\meano{\phi}=\frac{1}{\abs{\Omega}} \int_{\Omega} \phi \, \dx,
\quad
\meang{\psi}=\frac{1}{\abs{\Gamma}} \int_{\Gamma} \psi \, \dG.
\end{align*}
We then define the closed linear subspace
\begin{align*}
    \mathcal{V}_{L}^1 &\coloneqq \begin{cases} 
    \{\scp{\phi}{\psi}\in\mathcal{H}^1_L : \mean{\phi}{\psi} = 0 \}, &\text{if~} L\in[0,\infty), \\
    \{\scp{\phi}{\psi}\in\mathcal{H}^1: \meano{\phi} = \meang{\psi} = 0 \}, &\text{if~}L=\infty.
    \end{cases} 
\end{align*}
Note that this space is a Hilbert space with respect to the inner product $\scp{\cdot}{\cdot}_{\mathcal{H}^1}$.

Now, we set
\begin{align*}
    \chi(L) \coloneqq
    \begin{cases}
        L^{-1}, &\text{if } L\in(0,\infty), \\
            0, &\text{if } L\in\{0,\infty\},
    \end{cases}
\end{align*}
and we introduce a bilinear form on $\mathcal{H}^1\times\mathcal{H}^1$ by defining
\begin{align*}
    \bigscp{\scp{\phi}{\psi}}{\scp{\zeta}{\xi}}_{L} \coloneqq &\intO\Grad\phi\cdot\Grad\zeta \dx + \intG\Gradg\psi\cdot\Gradg\xi \dG + \chi(L)\intG (\beta\psi-\phi)(\beta\xi-\zeta)\dG
\end{align*}
for all $ \scp{\phi}{\psi}, \scp{\zeta}{\xi}\in\mathcal{H}^1$. Moreover, we set 
\begin{align*}
    \norm{\scp{\phi}{\psi}}_{L} \coloneqq \bigscp{\scp{\phi}{\psi}}{\scp{\phi}{\psi}}_{L}^{\frac12}
\end{align*}
for all $\scp{\phi}{\psi}\in\mathcal{H}^1$. The bilinear form $\scp{\cdot}{\cdot}_{L}$ defines an inner product on $\mathcal{V}^1_{L}$, and $\norm{\cdot}_{L}$ defines a norm on $\mathcal{V}^1_{L}$, that is equivalent to the norm $\norm{\cdot}_{\mathcal{H}^1}$ (see \cite[Corollary A.2]{Knopf2021}). Hence, the space $\mathcal{V}^1_{L}$ endowed with $\scp{\cdot}{\cdot}_{L}$ is a Hilbert space.

Next, we define the space
\begin{align*}
    \mathcal{V}_{L}^{-1} \coloneqq \begin{cases} 
    \{\scp{\phi}{\psi}\in(\mathcal{H}^1_L)^\prime : \mean{\phi}{\psi} = 0 \}, &\text{if~} L\in[0,\infty), \\
    \{\scp{\phi}{\psi}\in(\mathcal{H}^1)^\prime: \meano{\phi} = \meang{\psi} = 0 \}, &\text{if~}L=\infty.
    \end{cases}
\end{align*}
Using the Lax--Milgram theorem, one can show that for any $(\phi,\psi)\in\mathcal{V}^{-1}_{L}$, there exists a unique weak solution $\mathcal{S}_{L}(\phi,\psi) = \big(\mathcal{S}_{L}^\Om(\phi,\psi),\mathcal{S}_{L}^\Ga(\phi,\psi)\big)\in\mathcal{V}^1_{L}$ to the following elliptic problem with bulk-surface coupling
\begin{subequations}\label{BSE}
    \begin{alignat}{2}
        -\Lap\mathcal{S}_{L}^\Om(\phi,\psi) &= \phi&&\qquad\text{in~}\Om, \\
        -\Lapg\mathcal{S}_{L}^\Ga(\phi,\psi) + \beta\deln\mathcal{S}_{L}^\Om(\phi,\psi) &= \psi&&\qquad\text{on~}\Ga, \\
        L\deln\mathcal{S}_{L}^\Om(\phi,\psi) &= \beta\mathcal{S}_{L}^\Ga(\phi,\psi) - \mathcal{S}_{L}^\Om(\phi,\psi) &&\qquad\text{on~}\Ga,
    \end{alignat}
\end{subequations}
in the sense that it satisfies the weak formulation
\begin{align*}
    \big(\mathcal{S}_{L}(\phi,\psi),(\zeta,\xi)\big)_{L} = \bigang{(\phi,\psi)}{(\zeta,\xi)}_{\mathcal{H}^1_L}
\end{align*}
for all test functions $(\zeta,\xi)\in\mathcal{H}^1_L$. Consequently, there exists a constant $C > 0$, depending only on $\Om, L$ and $\beta$ such that
\begin{align*}
    \norm{\mathcal{S}_{L}(\phi,\psi)}_{L}\leq C\norm{(\phi,\psi)}_{(\mathcal{H}^1_L)^\prime}
\end{align*}
for all $(\phi,\psi)\in\mathcal{V}^{-1}_{L}$. This allows us to define a solution operator
\begin{align*}
    \mathcal{S}_{L}:\mathcal{V}^{-1}_{L}\rightarrow\mathcal{V}^1_{L}, \quad(\phi,\psi)\mapsto \mathcal{S}_{L}(\phi,\psi) = \big(\mathcal{S}_{L}^\Om(\phi,\psi),\mathcal{S}_{L}^\Ga(\phi,\psi)\big)
\end{align*}
as well as an inner product and its induced norm on $\mathcal{V}^{-1}_{L}$ via
\begin{align*}
    \big((\phi,\psi),(\zeta,\xi)\big)_{L,\ast} &\coloneqq \big(\mathcal{S}_{L}(\phi,\psi),\mathcal{S}_{L}(\zeta,\xi)\big)_{L}, \\
    \norm{(\phi,\psi)}_{L,\ast} &\coloneqq \big((\phi,\psi),(\phi,\psi)\big)_{L,\ast}^{\frac12}
\end{align*}
for all $(\phi,\psi), (\zeta,\xi)\in\mathcal{V}^{-1}_{L}$. This norm is equivalent to the norm $\norm{\cdot}_{(\mathcal{H}^1_L)^\prime}$ on $\mathcal{V}^{-1}_{L}$, see, e.g., \cite[Theorem~3.3 and Corollary~3.5]{Knopf2021} for a proof if $L\in(0,\infty)$. In the other cases, the proof can be carried out similarly.

Lastly, let $m\in\R$ if $L\in[0,\infty)$ or $m = (m_1,m_2)\in\R^2$ if $L = \infty$. Then we define
\begin{align*}
    \mathcal{W}_{K,L,m} \coloneqq \begin{cases} 
    \{\scp{\phi}{\psi}\in\mathcal{H}^1 : \mean{\phi}{\psi} = m \}, &\text{if~} L\in[0,\infty), \\
    \{\scp{\phi}{\psi}\in\mathcal{H}^1: \meano{\phi} = m_1, \ \meang{\psi} = m_2 \}, &\text{if~}L=\infty.
    \end{cases}
\end{align*}
\subsection{Important tools}

Throughout this paper, we will frequently use the bulk-surface Poincar\'{e} inequality, which has been established in \cite[Lemma A.1]{Knopf2021}:
\begin{lemma}
    \label{Prelim:Poincare}
    Let $K\in[0,\infty)$ and $\alpha,\beta\in\R$ such that $\alpha\beta\abs{\Om} + \abs{\Ga}\neq 0$. Then, there exists a constant $C_P > 0$, depending only on $K,\alpha,\beta$ and $\Om$ such that
    \begin{align*}
        \norm{(\zeta,\xi)}_{\mathcal{L}^2} \leq C_P \norm{(\zeta,\xi)}_{K}
    \end{align*}
    for all pairs $(\zeta,\xi)\in\mathcal{H}^1_K$ satisfying $\mean{\zeta}{\xi} = 0$.
\end{lemma}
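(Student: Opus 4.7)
The plan is to argue by contradiction combined with Rellich--Kondrachov compactness, exactly as in the classical proof of the Poincaré--Wirtinger inequality, with the bulk-surface coupling handled through the weakly closed structure of $\mathcal{H}^1_K$.

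First I would suppose the inequality fails and extract a sequence $(\zeta_n,\xi_n) \in \mathcal{H}^1_K$ with $\mean{\zeta_n}{\xi_n} = 0$, $\norm{(\zeta_n,\xi_n)}_{\mathcal{L}^2} = 1$, and $\norm{(\zeta_n,\xi_n)}_K \to 0$. Since $\norm{\cdot}_K^2$ dominates $\norm{\Grad\zeta_n}_{L^2(\Om)}^2$, $\norm{\Gradg\xi_n}_{L^2(\Ga)}^2$ and, when $K\in(0,\infty)$, the penalty $\chi(K)\norm{\beta\xi_n - \zeta_n}_{L^2(\Ga)}^2$, the sequence is bounded in $\mathcal{H}^1$. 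Passing to a subsequence, I then obtain a weak limit $(\zeta_*,\xi_*) \in \mathcal{H}^1$ together with strong $\mathcal{L}^2$-convergence via Rellich--Kondrachov in $\Om$ and on $\Ga$. Strong $\mathcal{L}^2$-convergence preserves the normalization $\norm{(\zeta_*,\xi_*)}_{\mathcal{L}^2} = 1$ and the zero-mean condition, while weak lower semicontinuity of the Hilbert seminorm $\norm{\cdot}_K$ yields $\norm{(\zeta_*,\xi_*)}_K = 0$.

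From this vanishing I would read off $\Grad\zeta_* \equiv 0$ in $\Om$, $\Gradg\xi_* \equiv 0$ on $\Ga$, together with the trace identity $\zeta_*|_\Ga = \beta\xi_*$; this identity is delivered by the quadratic penalty when $K \in (0,\infty)$, and it is the defining constraint of $\mathcal{H}^1_0$ when $K=0$. Connectedness of $\Om$ (and of $\Ga$) then forces $\zeta_* \equiv c_1$ and $\xi_* \equiv c_2$ with $c_1 = \beta c_2$. Substituting into the preserved relation $\mean{\zeta_*}{\xi_*} = 0$ gives $(\beta^2\abs{\Om}+\abs{\Ga})\,c_2 = 0$; since $\abs{\Ga}>0$ this forces $c_2 = 0$ and hence $c_1 = 0$, contradicting the normalization $\norm{(\zeta_*,\xi_*)}_{\mathcal{L}^2} = 1$.

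The main subtlety I would expect lies in the $K=0$ case, where the coupling $\phi = \beta\psi$ on $\Ga$ is a rigid linear constraint rather than a quadratic penalty: one must ensure it transfers to the weak limit. This is clean, however, because the constraint defines a closed, hence weakly closed, linear subspace of $\mathcal{H}^1$, so the limit $(\zeta_*,\xi_*)$ automatically lies in $\mathcal{H}^1_0$ and inherits the identity $\zeta_*|_\Ga = \beta\xi_*$. The non-degeneracy hypothesis involving $\alpha,\beta,\abs{\Om},\abs{\Ga}$ is precisely the algebraic statement that no nontrivial constant pair satisfying $c_1 = \beta c_2$ can have vanishing generalized mean, which is the final input needed to close the contradiction; the constant $C_P$ blows up exactly as this non-degeneracy degenerates, which accounts for its dependence on the listed parameters.
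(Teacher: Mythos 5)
Your overall architecture — contradiction, normalization, Rellich--Kondrachov compactness in $\Om$ and on $\Ga$ (plus compactness of the trace), weak lower semicontinuity of the seminorm $\norm{\cdot}_{K}$, weak closedness of the constrained subspace when $K=0$, and rigidity of constants — is the standard route, and it is essentially the argument behind the result the paper invokes; note the paper gives no proof of this lemma but cites \cite[Lemma~A.1]{Knopf2021}. There is, however, a concrete error in your closing step: you coupled bulk and surface through $\beta$, writing the $K$-penalty as $\abs{\beta\xi-\zeta}^2$ and the $K=0$ constraint as $\zeta\vert_\Ga=\beta\xi$. In this paper the parameter $K$ goes with $\alpha$, not $\beta$: the energy \eqref{INTRO:ENERGY} and the weak formulation \eqref{WF:MT:SING} show that
\begin{align*}
\norm{(\zeta,\xi)}_{K}^2 = \norm{\Grad\zeta}_{L^2(\Om)}^2 + \norm{\Gradg\xi}_{L^2(\Ga)}^2 + \chi(K)\norm{\alpha\xi-\zeta}_{L^2(\Ga)}^2,
\end{align*}
and for $K=0$ the space $\mathcal{H}^1_K$ carries the trace constraint $\zeta=\alpha\xi$ on $\Ga$ (it is $\mathcal{H}^1_L$, with parameter $L$, that uses $\beta$). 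The generalized mean, by contrast, is built with $\beta$: $\mean{\zeta}{\xi} = \big(\beta\intO\zeta\dx + \intG\xi\dG\big)/\big(\beta^2\abs{\Om}+\abs{\Ga}\big)$.

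This mismatch is not cosmetic. With the correct coupling, the limit constants satisfy $c_1=\alpha c_2$, and the preserved zero-mean condition gives $\beta\abs{\Om}c_1+\abs{\Ga}c_2=(\alpha\beta\abs{\Om}+\abs{\Ga})c_2=0$, so the hypothesis $\alpha\beta\abs{\Om}+\abs{\Ga}\neq 0$ is exactly what forces $c_2=c_1=0$. In your version the closing identity reads $(\beta^2\abs{\Om}+\abs{\Ga})c_2=0$, whose prefactor is always strictly positive, so your argument never actually uses the stated hypothesis — a warning sign you should have caught, and your final paragraph's claim that the hypothesis is ``precisely'' the nondegeneracy of constant pairs with $c_1=\beta c_2$ is inconsistent with your own computation. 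The hypothesis is genuinely needed: if $\alpha\beta\abs{\Om}+\abs{\Ga}=0$, the constant pair $(\alpha c, c)$ with $c\neq 0$ has vanishing $K$-norm and vanishing generalized mean, so the inequality fails. One further caveat: your rigidity step invokes connectedness of $\Ga$ parenthetically, but it is load-bearing — for $\alpha=0$ and a disconnected boundary one can take $\zeta\equiv 0$ and $\xi$ locally constant with $\intG\xi\dG=0$, which defeats the inequality while satisfying all other hypotheses. With the $\alpha$/$\beta$ roles corrected and connectedness made explicit, your proof closes and coincides with the compactness argument behind the cited lemma.
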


Furthermore, we recall the following interpolation inequality:
\begin{lemma}
    There exists a constant $C > 0$, such that for all $2 \leq r < \infty$ it holds that
	\begin{align}\label{Prelim:Est:Inteprol}
		\norm{(\zeta,\xi)}_{\mathcal{L}^r}\leq C\sqrt{r}\norm{(\zeta,\xi)}_{\mathcal{L}^2}^{\frac2r}\norm{(\zeta,\xi)}_{\mathcal{H}^1}^{\frac{r-2}{r}} \qquad\text{for all~}(\zeta,\xi)\in\mathcal{H}^1.
	\end{align}
\end{lemma}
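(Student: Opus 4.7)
The plan is to handle the bulk contribution $\zeta$ on $\Om\subset\R^2$ and the surface contribution $\xi$ on the $1$-dimensional compact manifold $\Ga$ separately, and then combine the two componentwise estimates. The factor $\sqrt{r}$ reflects the two-dimensional nature of $\Om$; the boundary estimate will, by contrast, be $r$-independent thanks to the Sobolev embedding $H^1(\Ga)\hookrightarrow L^\infty(\Ga)$.

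For the bulk term, I would invoke the two-dimensional Gagliardo--Nirenberg inequality with sharp growth in $r$,
\[
\norm{\zeta}_{L^r(\Om)}\le C\sqrt{r}\,\norm{\zeta}_{L^2(\Om)}^{2/r}\,\norm{\zeta}_{H^1(\Om)}^{(r-2)/r},
\qquad 2\le r<\infty,
\]
where $C$ depends only on $\Om$. This is a classical consequence of the Trudinger--Moser embedding $H^1(\Om)\hookrightarrow \exp L^2(\Om)$: expanding $\exp(\alpha u^2/\|u\|_{H^1}^2)\in L^1(\Om)$ in its Taylor series and identifying the $L^{2k}$-norms term by term produces the growth $\norm{u}_{L^{2k}(\Om)}\lesssim \sqrt{k}\,\norm{u}_{H^1(\Om)}$, from which the interpolated version follows by combining with $\norm{u}_{L^r}\le \norm{u}_{L^2}^{2/r}\norm{u}_{L^\infty}^{(r-2)/r}$-type interpolation. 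Alternatively, one extends $\zeta$ to a function in $H^1(\R^2)$ via a bounded extension operator and tracks the explicit $r$-dependence in the Gagliardo--Nirenberg constant on $\R^2$.

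For the surface term, I would use that $\Ga$ is a smooth compact one-dimensional manifold, so that $H^1(\Ga)\hookrightarrow L^\infty(\Ga)$ continuously with a constant $C=C(\Ga)$ independent of $r$. Standard interpolation then gives
\[
\norm{\xi}_{L^r(\Ga)}
\le \norm{\xi}_{L^\infty(\Ga)}^{(r-2)/r}\norm{\xi}_{L^2(\Ga)}^{2/r}
\le C\,\norm{\xi}_{H^1(\Ga)}^{(r-2)/r}\norm{\xi}_{L^2(\Ga)}^{2/r}.
\]
No $\sqrt{r}$ factor is needed on $\Ga$, so the bulk estimate will dictate the final $r$-dependence.

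To conclude, I would raise both inequalities to the $r$-th power and add them. Using $\norm{\zeta}_{L^2(\Om)}^2+\norm{\xi}_{L^2(\Ga)}^2=\norm{(\zeta,\xi)}_{\mathcal{L}^2}^2$ and the analogous identity for the $\mathcal{H}^1$-norm, together with the crude bounds $\norm{\zeta}_{L^2(\Om)}^2\le\norm{(\zeta,\xi)}_{\mathcal{L}^2}^2$ and $\norm{\zeta}_{H^1(\Om)}^{r-2}\le\norm{(\zeta,\xi)}_{\mathcal{H}^1}^{r-2}$ (and likewise for $\xi$), I obtain
\[
\norm{(\zeta,\xi)}_{\mathcal{L}^r}^r
=\norm{\zeta}_{L^r(\Om)}^r+\norm{\xi}_{L^r(\Ga)}^r
\le 2C^r r^{r/2}\,\norm{(\zeta,\xi)}_{\mathcal{L}^2}^{2}\,\norm{(\zeta,\xi)}_{\mathcal{H}^1}^{r-2}.
\]
Taking $r$-th roots and using $2^{1/r}\le 2$ for $r\ge 2$ absorbs the factor $2$ into a new constant and yields \eqref{Prelim:Est:Inteprol}. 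The main (and essentially only) technical point is the $\sqrt{r}$-dependent bulk Gagliardo--Nirenberg inequality, where the two-dimensionality of $\Om$ enters crucially; everything else is elementary interpolation and Sobolev embedding.
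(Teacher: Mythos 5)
Your proof is correct and takes essentially the same route as the paper, which obtains the bulk estimate from the sharp two-dimensional Gagliardo--Nirenberg inequality with $\sqrt{r}$-growth in the constant (citing Ozawa) and then notes that the extension to the product space is immediate because $\Gamma$ is a compact one-dimensional manifold --- your $H^1(\Gamma)\hookrightarrow L^\infty(\Gamma)$ argument and the componentwise combination are exactly the implicit content of that extension. One minor caveat: your Trudinger--Moser sketch via ``$L^\infty$-type interpolation'' does not by itself yield the exact exponents $2/r$ and $(r-2)/r$ (interpolating between $L^2$ and $L^{2r}$ only gives the weaker exponent $1/(r-1)$ on the $L^2$-norm, which cannot be upgraded to $2/r$ when $\|u\|_{L^2}\ll\|u\|_{H^1}$), but your alternative derivation --- extending to $H^1(\R^2)$ with an operator bounded simultaneously on $L^2$ and $H^1$ and invoking the known whole-space inequality --- is precisely what the cited reference provides, so nothing essential is missing.
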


Since $\Gamma$ is a $1$-dimensional submanifold of $\R$, we can readily obtain \eqref{Prelim:Est:Inteprol} as an extension of the following Gagliardo--Nirenberg--Sobolev inequality (see \cite[Proposition and Remark~1]{Ozawa1995}): there exists a constant $C > 0$, such that for all $2 \leq r < \infty$ it holds that
\begin{align*}
    \norm{\zeta}_{L^r(\Om)} \leq C\sqrt{r}\norm{\zeta}_{L^2(\Om)}^{\frac{2}{r}}\norm{\zeta}_{H^1(\Om)}^{\frac{r-2}{r}} \qquad\text{for all~}\zeta\in H^1(\Om).
\end{align*}

Lastly, we present the following uniform variant of the Gronwall lemma. A proof can be found, e.g., in \cite[Chapter~III, Lemma~1.1]{Temam1997}.

\begin{lemma}\label{Lemma:Gronwall}
    Let $g,h,y$ be three positive locally integrable functions on $(t_0,\infty)$ such that $y^\prime$ is locally integrable on $(t_0,\infty)$ and which satisfy
    \begin{align*}
        \ddt y &\leq gy + h, \\
        \int_t^{t+r} g(s)\ds \leq a_1, \quad \int_t^{t+r} h(s)\ds &\leq a_2, \quad \int_t^{t+r} y(s)\ds \leq a_3 \quad\text{for all~}t\geq t_0,
    \end{align*}
    where $r,a_1,a_2,a_3$ are positive constants. Then it holds
    \begin{align*}
        y(t) \leq \left(\frac{a_3}{r} + a_2\right)e^{a_1} \qquad\text{for all~}t\geq t_0 + r.
    \end{align*}
\end{lemma}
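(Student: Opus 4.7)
The plan is to combine the classical (differential) Gronwall inequality with an averaging argument over the window $[t-r,t]$, which is the standard trick behind Temam's uniform Gronwall lemma. First, I would fix $t\geq t_0+r$ and, for an auxiliary variable $s\in[t-r,t]$, apply the classical Gronwall lemma to the differential inequality $y'\leq gy+h$ on the interval $[s,t]$. This yields
\begin{align*}
    y(t)\leq y(s)\exp\!\Bigl(\int_s^t g(\tau)\dtau\Bigr) + \int_s^t h(\tau)\exp\!\Bigl(\int_\tau^t g(\sigma)\,\mathrm{d}\sigma\Bigr)\dtau.
\end{align*}

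Next, I would exploit the uniform integral bounds. Since $s\geq t-r\geq t_0$, the hypothesis on $g$ (applied at the point $t-r$) gives $\int_s^t g(\tau)\dtau\leq\int_{t-r}^t g(\tau)\dtau\leq a_1$, and similarly $\int_s^t h(\tau)\dtau\leq a_2$. Substituting these bounds into the previous estimate and using monotonicity of the exponential, I obtain
\begin{align*}
    y(t)\leq y(s)\,e^{a_1} + a_2\,e^{a_1}.
\end{align*}

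The key step is now to turn this pointwise inequality in $s$ into a useful bound by averaging. Integrating the above inequality with respect to $s$ over $[t-r,t]$ (noting that the left-hand side is constant in $s$) gives
\begin{align*}
    r\,y(t)\leq e^{a_1}\int_{t-r}^t y(s)\ds + r\,a_2\,e^{a_1} \leq e^{a_1}\bigl(a_3 + r\,a_2\bigr),
\end{align*}
where I used the uniform bound on the integral of $y$. Dividing by $r$ yields the claim. I do not anticipate genuine obstacles here: the main point is simply to recognise that the hypotheses are exactly tailored so that one can slide the window of length $r$ and apply the classical Gronwall inequality on it, then average in the starting point $s$ to eliminate the dependence on $y(s)$.
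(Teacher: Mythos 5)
Your proof is correct and is essentially the classical argument behind the result the paper cites (Temam, Chapter III, Lemma 1.1): apply the standard Gronwall inequality on $[s,t]$, bound the exponentials using the sliding-window hypotheses at the point $t-r\geq t_0$, and average over $s\in[t-r,t]$ to remove the dependence on $y(s)$. The paper itself gives no proof but refers to exactly this argument, so there is nothing to add.
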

\medskip

\subsection{Main Assumptions.}
\label{SUBSEC:ASS}

\begin{enumerate}[label=\textnormal{\bfseries(A\arabic*)}]
    \item \label{Ass:Constants} The constants $\alpha,\beta\in\R$ appearing in system \eqref{EQ:SYSTEM} are supposed to satisfy $\alpha\in[-1,1]$ as well as $\alpha\beta\abs{\Om} + \abs{\Ga} \neq 0$.
    \item \label{Ass:Mobility} For the mobility functions we require $m_\Om,m_\Ga\in C([-1,1])$. Furthermore, we assume the existence of constants $m^\ast,M^\ast > 0$ such that
    \begin{align}\label{Ass:Mobility:Bound}
        0 < m^\ast \leq m_\Om(s), m_\Ga(s) \leq M^\ast \qquad\text{for all~}s\in[-1,1].
    \end{align}

    \item \label{Ass:Potentials} For the potentials, we assume that $F,G:\R\rightarrow\R$ are of the form
    \begin{align*}
        F(s) = F_1(s) + F_2(s), \qquad G(s) = G_1(s) + G_2(s),
    \end{align*}
    where $F_1,G_1\in C([-1,1])\cap C^2(-1,1)$ such that $F_1(0) = F_1^\prime(0) = G_1(0) = G_1^\prime(0) = 0$,
    \begin{align*}
        \lim_{s\searrow -1} F_1^\prime(s) = \lim_{s\searrow -1} G_1^\prime(s) = -\infty \quad\text{and}\quad
        \lim_{s\nearrow 1} F_1^\prime(s) = \lim_{s\nearrow 1} G_1^\prime(s) = +\infty,
    \end{align*}
    and there exist constants $\Theta_\Om,\Theta_\Ga > 0$ such that
    \begin{align}\label{Assumption:Pot:Convexity}
        F_1^{\prime\prime}(s) \geq \Theta_\Om \quad\text{and~}\quad G_1^{\prime\prime}(s) \geq \Theta_\Ga \qquad\text{for all~}s\in(-1,1).
    \end{align}
    We extend $F_1$ and $G_1$ on $\R$ by defining $F_1(s) = G_1(s) = + \infty$ for $s\not\in[-1,1]$. For $F_2$ and $G_2$, we assume $F_2,G_2\in C^1(\R)$ such that their derivatives are globally Lipschitz continuous. Lastly, we require that the singular part of the boundary potential dominates the singular part of the bulk potential in the sense that there exist constants $\kappa_1, \kappa_2 > 0$ such that
    \begin{align}\label{Ass:Potentials:Domination}
        \abs{F_1^\prime(\alpha s)}\leq \kappa_1 \abs{G_1^\prime(s)} + \kappa_2 \qquad\text{for all~}s\in(-1,1).
    \end{align}
\end{enumerate}
For certain results established in this work, we require additional growth conditions on the singular components $F_1$ and $G_1$ of the potentials.

\begin{enumerate}[label=\textnormal{\bfseries(A\arabic*)}, start=4]
    \item \label{Ass:Potentials:Sep} We assume that one of the following conditions hold: 
    \begin{enumerate}[label=\textnormal{\bfseries(A4.\arabic*)}]
        \item \label{Ass:Potentials:Sep:1} There exist constants $C_\sharp > 0$ and $\gamma_\sharp\in[1,2)$ such that
            \begin{align}\label{Ass:Potentials:Growth:1}
                F_1^{\prime\prime}(s) \leq C_\sharp \e^{C_\sharp\abs{F_1^\prime(s)}^{\gamma_\sharp}}\qquad\text{for all~}s\in(-1,1).
            \end{align}
        \item \label{Ass:Potentials:Sep:2} As $\delta\searrow 0$, for some $\kappa > \frac12$, it holds that
        \begin{align}\label{Ass:Potentials:Growth:2}
            \frac{1}{F_1^\prime(1 - 2\delta)} = O\left(\frac{1}{\abs{\ln\delta}^\kappa}\right), \qquad \frac{1}{\abs{F_1^\prime(-1 + 2\delta)}} = O\left(\frac{1}{\abs{\ln\delta}^\kappa}\right).
        \end{align}
    \end{enumerate}
\end{enumerate}

\medskip
\section{Main results} 
\label{Section:Main}

\subsection{Well-posedness of weak solutions}

We start by introducing the notion of a weak solution.

\begin{definition}\label{DEF:SING:WS}
    Let $K,L\in[0,\infty]$, and let $\scp{\phi_0}{\psi_0}\in\mathcal{H}^1_K$ be an initial datum satisfying
    \begin{subequations}\label{cond:init}
    \begin{align}\label{cond:init:int}
        \norm{\phi_0}_{L^\infty(\Om)} \leq 1, \qquad \norm{\psi_0}_{L^\infty(\Ga)} \leq 1.
    \end{align}
    In addition, we assume that
    \begin{align}\label{cond:init:mean:L}
       &\beta\,\mean{\phi_0}{\psi_0}\in (-1,1), \quad \mean{\phi_0}{\psi_0}\in(-1,1), \quad \text{if~ }L\in[0,\infty),
    \end{align}
    and 
    \begin{align}
     \label{cond:init:mean:inf}
      &  \meano{\phi_0}\in(-1,1), \quad \meang{\psi_0}\in(-1,1), \quad \text{if~ }L=\infty.
    \end{align}
    \end{subequations}
    The quadruplet $(\phi,\psi,\mu,\theta)$ is a called a weak solution of the system \eqref{EQ:SYSTEM} on $[0,T]$ for $T > 0$ if the following properties hold:
    \begin{enumerate}[label=\textnormal{(\roman*)}, ref=\thetheorem(\roman*), topsep=0ex, leftmargin=*, itemsep=1.5ex]
        \item The functions $\phi, \psi, \mu$ and $\theta$ satisfy
        \begin{subequations}
        \label{REG:SING}
            \begin{align}
                &\scp{\phi}{\psi} \in C([0,T];\mathcal{L}^2)\cap H^1(0,T;(\mathcal{H}_{L}^1)^\prime)\cap 
                L^\infty(0,T;\mathcal{H}_{K}^1), \label{REGPP:SING}\\
                &\scp{\mu}{\theta}\in L^2(0,T;\mathcal{H}_{L}^1) \label{REGMT:SING}, \\
                &\scp{F^\prime(\phi)}{G^\prime(\psi)}\in L^2(0,T;\mathcal{L}^2) \label{REGLC:SING},
            \end{align}
        \end{subequations}
        and it holds 
        \begin{equation}
            \label{PROP:CONF}
            \abs{\phi} < 1 \quad\text{a.e.~in $Q$}
            \quad\text{and}\quad
            \abs{\psi} < 1 \quad\text{a.e.~on $\Sigma$}.
        \end{equation}
    
    \item \label{DEF:SING:WS:IC} The initial conditions are satisfied in the following sense:
    \begin{align*}
        \phi\vert_{t=0} = \phi_0 \quad\text{a.e.~in } \Omega, \quad\text{and} \quad\psi\vert_{t=0} = \psi_0 \quad\text{a.e.~on }\Gamma.
    \end{align*}
    \item \label{DEF:SING:WS:WF} The variational formulation
    \begin{subequations}\label{SING:WF}
        \begin{align}
        &\begin{aligned}
            \ang{\scp{\delt\phi}{\delt\psi}}{\scp{\zeta}{\xi}}_{\mathcal{H}_{L}^1}  &= - \intO m_\Om(\phi)\Grad\mu\cdot\Grad\zeta\dx - \intG  m_\Ga(\psi)\Gradg\theta\cdot\Gradg\xi\dG \\
            &\quad - \chi(L)\intG(\beta\theta-\mu)(\beta\xi - \zeta)\dG,  \label{WF:PP:SING}
        \end{aligned}
            \\
        &\begin{aligned}
            \intO \mu\,\eta\dx + \intG\theta\,\vartheta\dG &=  \intO\Grad\phi\cdot\Grad\eta + F^\prime(\phi)\eta\dx
            + \intG \Gradg\psi\cdot\Gradg\vartheta + G^\prime(\psi)\vartheta\dG\label{WF:MT:SING}
            \\
            &\quad + \chi(K)\intG(\alpha\psi-\phi)(\alpha\vartheta - \eta) \dG, 
        \end{aligned}
        \end{align}
    \end{subequations}
    holds a.e. on $[0,T]$ for all $\scp{\zeta}{\xi}\in\mathcal{H}_{L}^1, \scp{\eta}{\vartheta}\in\mathcal{H}_{K}^1$.
    \item \label{DEF:SING:WS:MCL} The functions $\phi$ and $\psi$ satisfy the mass conservation law
    \begin{align}\label{MCL:SING}
        \begin{dcases}
            \beta\intO \phi(t)\dx + \intG \psi(t)\dG = \beta\intO \phi_0 \dx + \intG \psi_0\dG, &\textnormal{if } L\in[0,\infty), \\
            \intO\phi(t)\dx = \intO\phi_0\dx \quad\textnormal{and}\quad \intG\psi(t)\dG = \intG\psi_0\dG, &\textnormal{if } L = \infty
        \end{dcases}
    \end{align}
    for all $t\in[0,T]$.
    \item \label{DEF:SING:WS:WEDL} The energy inequality
    \begin{align}\label{WEDL:SING}
        \begin{split}
            &E(\phi(t),\psi(t)) + \int_0^t\intO m_\Om(\phi)\abs{\Grad\mu}^2\dxs + \int_0^t\intG m_\Ga(\psi)\abs{\Gradg\theta}^2\dGs \\
            &\quad + \chi(L) \int_0^t\intG (\beta\theta-\mu)^2\dGs \leq E(\phi_0,\psi_0)
           % \\[1ex]
            %&
        \end{split}
    \end{align}
    holds for all $t\in[0,T]$.
    \end{enumerate}
\end{definition}

Our first main result is devoted to the well-posedness of weak solutions to \eqref{EQ:SYSTEM} in the sense of Definition~\ref{DEF:SING:WS}.

\begin{theorem}\label{Theorem}
	Assume that the assumptions \ref{Ass:Constants}-\ref{Ass:Potentials} hold, and let $K\in(0,\infty]$ and $L\in[0,\infty]$. Then, the following results hold:
	\begin{enumerate}[label = \textbf{(\Roman*)}, leftmargin=*]
		\item \textbf{Existence of weak solutions.} Let $(\phi_0,\psi_0)\in\mathcal{H}^1$ satisfy \eqref{cond:init}. Then there exists a weak solution $(\phi,\psi,\mu,\theta)$ of \eqref{EQ:SYSTEM} such that
	        \begin{subequations}
            \begin{align}\label{ContinuousDependence}
                &(\phi,\psi)\in L^\infty([0,\infty);\mathcal{H}^1)\cap L^4_{\mathrm{uloc}}([0,\infty);\mathcal{H}^2)\cap L^2_{\mathrm{uloc}}([0,\infty);\mathcal{W}^{2,p}), \\
                &(\delt\phi,\delt\psi)\in L^2([0,\infty);(\mathcal{H}^1_L)^\prime), \\
                &(F^\prime(\phi),G^\prime(\psi))\in L^2_{\mathrm{uloc}}([0,\infty);\mathcal{L}^p), \\
                &(\mu,\theta)\in L^2_{\mathrm{uloc}}([0,\infty);\mathcal{H}^1),
            \end{align}
        \end{subequations}
        for any $2\leq p <\infty$.
        \item \textbf{Uniqueness of weak solutions.} Suppose additionally that $m_\Om,m_\Ga\in C^2([-1,1])$. Let $(\phi_1,\psi_1)$, $(\phi_2,\psi_2)$ be two weak solutions originating from initial conditions $(\phi_1^0,\psi_1^0)$, $(\phi_2^0,\psi_2^0)$ satisfying \eqref{cond:init} as well as
    \begin{align}\label{cond:init:uniqueness:conv}
        \begin{dcases}
            \mean{\phi_1^0}{\psi_1^0} = \mean{\phi_2^0}{\psi_2^0}, &\textnormal{if } L\in[0,\infty), \\
            \meano{\phi_1^0} = \meano{\phi_2^0} \quad\textnormal{and}\quad \meang{\psi_1^0} = \meang{\psi_2^0}, &\textnormal{if } L = \infty.
        \end{dcases}
    \end{align}
    Then, for any $T > 0$, there exists a positive constant $C$ such that
    \begin{align*}
    	\norm{(\phi_1(t) - \phi_2(t),\psi_1(t) - \psi_2(t))}_{(\mathcal{H}^1_L)^\prime} \leq C\norm{(\phi_1^0 - \phi_2^0,\psi_1^0 - \psi_2^0)}_{(\mathcal{H}^1_L)^\prime}
    \end{align*}
    for all $t\in[0,T]$.
    The constant $C$ only depends on the parameters of the system, the final time $T$, and the initial free energies $E(\phi_1^0,\psi_1^0)$ and $E(\phi_2^0,\psi_2^0)$. In particular, the weak solution is unique.
	\end{enumerate}
\end{theorem}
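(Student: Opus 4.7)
I would construct a weak solution via a Faedo--Galerkin scheme in which only the singular parts $F_1,G_1$ of the potentials are approximated (for instance by a Moreau--Yosida convex regularization on $\R$); the mobilities are kept as they are, being already continuous and uniformly positive by \ref{Ass:Mobility}. Choosing a basis of $\mathcal{H}^1_K$ adapted to the coupled inner product $(\cdot,\cdot)_K$, the Galerkin system is a locally Lipschitz ODE. The energy identity \eqref{INTRO:ENERGY:ID}, combined with an $L^2$-bound on $(F_1'(\phi^n),G_1'(\psi^n))$ obtained by testing the chemical-potential equation with $F_1'(\phi^n)-\overline{F_1'(\phi^n)}$ and $G_1'(\psi^n)-\overline{G_1'(\psi^n)}$ and using \eqref{Assumption:Pot:Convexity} together with the domination \eqref{Ass:Potentials:Domination}, produces uniform estimates for $(\phi^n,\psi^n)$ in $L^\infty(0,T;\mathcal{H}^1_K)$, for $(\mu^n,\theta^n)$ in $L^2(0,T;\mathcal{H}^1_L)$ (after subtracting the conserved mean) and for $(\partial_t\phi^n,\partial_t\psi^n)$ in $L^2(0,T;(\mathcal{H}^1_L)')$. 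Aubin--Lions compactness permits the passage to the limit, the confinement $\abs{\phi}<1,\abs{\psi}<1$ being transferred via convex lower semicontinuity. The additional uniform-in-time regularity \eqref{ContinuousDependence} is then obtained by reading \eqref{EQ:SYSTEM:2},\eqref{EQ:SYSTEM:4} pointwise in $t$ as a linear bulk--surface elliptic system for $(\phi,\psi)$ with data $(\mu-F'(\phi),\theta-G'(\psi))$ and applying the regularity result of Section~\ref{Section:EBS}; in combination with \eqref{Prelim:Est:Inteprol}, this promotes the $L^2_{\mathrm{uloc}}$-control of $(\mu,\theta)$ to the claimed $L^4_{\mathrm{uloc}}(\mathcal{H}^2)\cap L^2_{\mathrm{uloc}}(\mathcal{W}^{2,p})$ bound.

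\textbf{Uniqueness (Part II).} Let $(\phi,\psi)=(\phi_1-\phi_2,\psi_1-\psi_2)$ and $(\mu,\theta)=(\mu_1-\mu_2,\theta_1-\theta_2)$. The compatibility condition \eqref{cond:init:uniqueness:conv} together with \eqref{MCL:SING} places $(\phi(t),\psi(t))\in\mathcal{V}^{-1}_L$ for every $t$. Following the blueprint of \cite{Conti2025}, the key observation is that the right object to invert the difference with is \emph{not} the constant-coefficient operator $\mathcal{S}_L$ but rather the weighted pair $(\mathcal{U}(t),\mathcal{V}(t))$ determined by
\begin{align*}
-\Div(m_\Om(\phi_1)\Grad\mathcal{U}) &= \phi \quad\text{in } \Om, \\
-\Divg(m_\Ga(\psi_1)\Gradg\mathcal{V}) + \beta\, m_\Om(\phi_1)\deln\mathcal{U} &= \psi \quad\text{on } \Ga, \\
L\, m_\Om(\phi_1)\deln\mathcal{U} &= \beta\mathcal{V}-\mathcal{U} \quad\text{on } \Ga
\end{align*}
(modified in the natural way when $L\in\{0,\infty\}$), i.e. the non-constant-coefficient bulk--surface elliptic problem analysed in Section~\ref{Section:EBS}. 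The associated weighted norm
\begin{align*}
\Psi(t)=\int_\Om m_\Om(\phi_1)\abs{\Grad\mathcal{U}}^2\dx + \int_\Ga m_\Ga(\psi_1)\abs{\Gradg\mathcal{V}}^2\dG + \chi(L)\int_\Ga(\beta\mathcal{V}-\mathcal{U})^2\dG
\end{align*}
is equivalent to $\norm{(\phi,\psi)}_{(\mathcal{H}^1_L)'}^2$ uniformly in $t$. Testing the difference form of \eqref{WF:PP:SING} with $(\mathcal{U},\mathcal{V})$ and splitting
\begin{align*}
m_\Om(\phi_1)\Grad\mu_1 - m_\Om(\phi_2)\Grad\mu_2 = m_\Om(\phi_1)\Grad\mu + (m_\Om(\phi_1)-m_\Om(\phi_2))\Grad\mu_2
\end{align*}
(with its surface analogue), an integration by parts against the defining equations of $(\mathcal{U},\mathcal{V})$ collapses the coupling boundary contributions and reduces the principal part to $\intO\mu\phi\dx + \intG\theta\psi\dG + \chi(L)\intG(\beta\theta-\mu)(\beta\psi-\phi)\dG$. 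Inserting the difference of \eqref{WF:MT:SING} tested with $(\phi,\psi)$ and invoking the convexity \eqref{Assumption:Pot:Convexity} then produces a nonnegative quantity bounding $\norm{(\phi,\psi)}_K^2$ from below, so that the manipulation yields a differential inequality of the form
\begin{align*}
\ddt\Psi(t) + \kappa\,\norm{(\phi(t),\psi(t))}_K^2 \leq g(t)\,\Psi(t),
\end{align*}
and Gronwall's lemma closes the continuous dependence estimate.

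\textbf{Main obstacle.} The crucial difficulty is to certify $g\in L^1(0,T)$. The function $g$ must absorb (i) the cross terms $(m_\Om(\phi_1)-m_\Om(\phi_2))\Grad\mu_2$ on $\Om$ and $(m_\Ga(\psi_1)-m_\Ga(\psi_2))\Gradg\theta_2$ on $\Ga$ arising from the mobility splitting, and (ii) the contributions from $\partial_t m_\Om(\phi_1)$, $\partial_t m_\Ga(\psi_1)$ produced by differentiating $\Psi$ in time through its coefficients. Neither ingredient is available from the basic energy inequality: (i) requires an $L^2_t\mathcal{L}^4_x$-control of $\Grad\mu_i$ together with a trace control on $\Ga$, and (ii) demands more regularity of $(\partial_t\phi_i,\partial_t\psi_i)$ than $L^2(0,T;(\mathcal{H}^1_L)')$. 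This is precisely the place where the two novel inputs highlighted in the introduction enter: the elliptic regularity theory of Section~\ref{Section:EBS}, applied to $(\mu_i,\theta_i)$ viewed as solutions of the weighted bulk--surface problem with source $\partial_t(\phi_i,\psi_i)$, furnishes the required integrability of $\Grad\mu_i$ and $\Gradg\theta_i$ (the $C^2$-assumption on the mobilities is what makes $m_\Om(\phi_i),m_\Ga(\psi_i)$ admissible coefficients in that theory), while the higher time-regularity of $(\phi_i,\psi_i)$ from \cite[Theorem~3.3]{Giorgini2025} is what makes the $\partial_t\Psi$ term tractable and $g$ integrable in the two-dimensional setting.
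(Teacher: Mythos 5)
Your uniqueness strategy coincides with the paper's: your pair $(\mathcal{U},\mathcal{V})$ is exactly $\mathcal{S}_{L}[\phi_1,\psi_1](\Phi,\Psi)$ from Section~\ref{Section:EBS}, your $\Psi(t)$ is the squared norm $\norm{(\Phi,\Psi)}_{L,[\phi_1,\psi_1],\ast}^2$, and the differential inequality you aim for is precisely \eqref{Est:PreGronwall:Uniq}, closed by Gronwall. Two refinements on your ``main obstacle'' paragraph, though. First, the paper does \emph{not} need more time regularity of $(\delt\phi_1,\delt\psi_1)$ than $L^2(0,T;(\mathcal{H}^1_L)^\prime)$, which already follows from the energy inequality: the term produced by differentiating $\Psi$ through the coefficients is estimated by pairing $\mathcal{S}_1(\delt\phi_1,\delt\psi_1)$ against $\big(m_\Om^\prime(\phi_1)\abs{\Grad\mathcal{S}_1^\Om(\Phi,\Psi)}^2,\,m_\Ga^\prime(\psi_1)\abs{\Gradg\mathcal{S}_1^\Ga(\Phi,\Psi)}^2\big)$ in the $\norm{\cdot}_{L}$-norm, and what makes this tractable is the $L^4_{\mathrm{uloc}}(\mathcal{H}^2)$ bound on $(\phi_1,\psi_1)$ together with the $\mathcal{H}^2$ and $\mathcal{W}^{2,4}$ elliptic estimates \eqref{Est:Sol:G:H^2}, \eqref{Est:Sol:G:W^24} for $\mathcal{S}_1(\Phi,\Psi)$. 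Second, the $C^2$-assumption on the mobilities is not needed to make $m_\Om(\phi_1)$ an admissible coefficient in the $\mathcal{H}^2$/$\mathcal{W}^{2,p}$ theory (that requires only $C^1$); it enters because the rigorous chain rule \eqref{ChainRuleFormula} — which the paper justifies by a nontrivial time-mollification of $(\phi_1,\psi_1)$ and a limit passage, a step your sketch takes for granted — produces terms involving $m_\Om^{\prime\prime}$ and $m_\Ga^{\prime\prime}$.

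The genuine gap is in your existence part, in the derivation of the $L^4_{\mathrm{uloc}}([0,\infty);\mathcal{H}^2)$ regularity, which is the linchpin of the whole uniqueness argument since the Gronwall weight $Q$ contains $\norm{(\phi_1,\psi_1)}_{\mathcal{H}^2}^4$. Reading \eqref{EQ:SYSTEM:2}, \eqref{EQ:SYSTEM:4} as a \emph{linear} bulk--surface elliptic system with data $(\mu-F^\prime(\phi),\theta-G^\prime(\psi))$ and applying the theory of Section~\ref{Section:EBS} yields at best $\norm{(\phi,\psi)}_{\mathcal{H}^2}\leq C\big(1+\norm{(\mu,\theta)}_{\mathcal{L}^2}+\norm{(F^\prime(\phi),G^\prime(\psi))}_{\mathcal{L}^2}\big)$, and since $\norm{(\mu,\theta)}_{\mathcal{H}^1}$ is controlled only in $L^2$ in time, this gives $L^2_{\mathrm{uloc}}(\mathcal{H}^2)$, not $L^4$; the interpolation inequality \eqref{Prelim:Est:Inteprol} cannot repair the mismatch of time-integrability. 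The actual $L^4$ bound rests on the \emph{nonlinear} estimate
\begin{align*}
    \norm{(-\Lap\phi,\,-\Lapg\psi+\alpha\deln\phi)}_{\mathcal{L}^2}^2 \leq C\big(1+\norm{(\mu,\theta)}_{\mathcal{H}^1}\big),
\end{align*}
with the square on the left and the \emph{first} power on the right (see \cite[Corollary~5.4]{Giorgini2025}), obtained by testing with the Laplacians and discarding the nonnegative terms $\int F_1^{\prime\prime}(\phi)\abs{\Grad\phi}^2$, $\int G_1^{\prime\prime}(\psi)\abs{\Gradg\psi}^2$ via the convexity \eqref{Assumption:Pot:Convexity}; this square-root gain turns the $L^2$-in-time control of $\norm{(\mu,\theta)}_{L}$ into an $L^4$-in-time control of $\norm{(\phi,\psi)}_{\mathcal{H}^2}$. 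It is exactly here that the restriction $K\in(0,\infty]$ bites: for $K=0$ the boundary terms in the integration by parts obstruct the argument and only $L^3_{\mathrm{uloc}}(\mathcal{H}^2)$ survives, which is insufficient for uniqueness. Finally, a structural remark: the paper does not rerun a Galerkin scheme but cites existence from \cite{Knopf2025,Giorgini2025} and only upgrades the regularity; your Faedo--Galerkin construction is a legitimate alternative route to the basic weak solution, but the regularity upgrade must go through the nonlinear estimate above rather than linear elliptic theory.
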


\begin{remark}
    The restriction $K\in(0,\infty]$ in Theorem~\ref{Theorem} is necessary, as only in this case we are able to prove that
    \begin{align*}
        (\phi,\psi)\in L^4_{\mathrm{uloc}}([0,\infty);\mathcal{H}^2),
    \end{align*}
    which is an essential ingredient in the proof of the uniqueness of weak solutions. If $K = 0$, one only has
    \begin{align*}
        (\phi,\psi)\in L^3_{\mathrm{uloc}}([0,\infty);\mathcal{H}^2),
    \end{align*}
    see \cite[Theorem~3.3]{Giorgini2025}, which appears insufficient to establish the uniqueness of weak solutions to \eqref{EQ:SYSTEM}.
\end{remark}

\subsection{Propagation of regularity and instantaneous separation property.}
Our next result is concerned with the propagation of regularity. We show that under certain regularity assumptions on the mobility functions, there exists a weak solution to \eqref{EQ:SYSTEM} that enjoys higher regularity properties on the time interval $(\tau,\infty)$ for any $\tau > 0$. 

\begin{theorem}\label{Theorem:PropReg}
Suppose that the assumptions \ref{Ass:Constants}-\ref{Ass:Potentials:Sep} hold.
Let $K\in(0,\infty]$, $L\in[0,\infty]$, let $(\phi_0,\psi_0)\in\mathcal{H}^1$ be an initial datum satisfying \eqref{cond:init}, and let $\tau > 0$. Then there exists a weak solution $(\phi,\psi,\mu,\theta)$ to \eqref{EQ:SYSTEM} in the sense of Definition~\ref{DEF:SING:WS} satisfying
    \begin{align}
    	&(\phi,\psi)\in L^\infty(\tau,\infty;\mathcal{W}^{2,p}), \quad (\delt\phi,\delt\psi)\in L^\infty(\tau,\infty;(\mathcal{H}^1_L)^\prime)\cap L^2_{\mathrm{uloc}}([\tau,\infty);\mathcal{H}^1), \label{PropReg:tau:1}\\
    	&(\mu,\theta)\in L^\infty(\tau,\infty;\mathcal{H}^1_L)\cap L^4_{\mathrm{uloc}}([\tau,\infty);\mathcal{H}^2), \quad (F^\prime(\phi),G^\prime(\psi))\in L^\infty(\tau,\infty;\mathcal{L}^p) \label{PropReg:tau:2}
    \end{align}
    for any $2\leq p < \infty$. Moreover, the equations \eqref{EQ:SYSTEM:1}-\eqref{EQ:SYSTEM:2} are satisfied almost everywhere in $\Om\times(\tau,\infty)$, while \eqref{EQ:SYSTEM:3}-\eqref{EQ:SYSTEM:4} and the boundary conditions \eqref{EQ:SYSTEM:5}-\eqref{EQ:SYSTEM:6} are satisfied almost everywhere on $\Ga\times(\tau,\infty)$. In addition, if $m_\Om,m_\Ga\in C^2([-1,1])$, then $(\mu,\theta)\in L^2_{\mathrm{uloc}}([\tau,\infty);\mathcal{H}^3)$.
\end{theorem}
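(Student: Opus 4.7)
The argument follows the scheme sketched in the introduction, reducing to the setting of Theorem~\ref{Theorem} via an additional mollification. Since $m_\Om, m_\Ga$ are only assumed to lie in $C^1([-1,1])$, the uniqueness statement and the key differential inequality from the proof of Theorem~\ref{Theorem}(II) cannot be applied directly. I would therefore first approximate $m_\Om, m_\Ga$ by sequences $m_\Om^n, m_\Ga^n \in C^2([-1,1])$ converging uniformly on $[-1,1]$ and preserving the uniform bounds $m^\ast \le m_\Om^n, m_\Ga^n \le M^\ast$. For each $n$, Theorem~\ref{Theorem} delivers a unique weak solution $(\phi_n,\psi_n,\mu_n,\theta_n)$ of the regularized system, enjoying the regularity \eqref{ContinuousDependence}. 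All subsequent estimates will be derived uniformly in $n$ on these smooth solutions, and the theorem is obtained by passing to the limit $n\to\infty$ through weak-$*$ compactness and Aubin--Lions.

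The core of the argument is a differential inequality for $\norm{(\delt\phi_n,\delt\psi_n)}_{L,\ast}^2$, of the same nature as the one driving the uniqueness proof of Theorem~\ref{Theorem}(II). Formally differentiating \eqref{WF:PP:SING} in time and testing with $\mathcal{S}_L(\delt\phi_n,\delt\psi_n)$ (made rigorous via a Steklov time-average) leads to an estimate of the form
\begin{align*}
    \ddt\norm{(\delt\phi_n,\delt\psi_n)}_{L,\ast}^2 + c\,\norm{(\delt\phi_n,\delt\psi_n)}_{\mathcal{L}^2}^2 \le g_n(t)\,\norm{(\delt\phi_n,\delt\psi_n)}_{L,\ast}^2 + h_n(t),
\end{align*}
where $g_n, h_n \ge 0$ are locally integrable with $L^1$-norms over unit time intervals bounded uniformly in $n$. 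These integrability properties are obtained by combining the dissipation from \eqref{WEDL:SING}, the $\mathcal{H}^2$-regularity provided by \eqref{ContinuousDependence}, and the two-dimensional Gagliardo--Nirenberg estimate \eqref{Prelim:Est:Inteprol}. Since $(\delt\phi_n, \delt\psi_n) \in L^2(0,\infty;(\mathcal{H}^1_L)')$ uniformly in $n$ as a consequence of the energy inequality, Lemma~\ref{Lemma:Gronwall} then yields
\begin{align*}
    \norm{(\delt\phi_n(t), \delt\psi_n(t))}_{L,\ast}^2 \le C(\tau) \qquad \text{for all } t\ge\tau,
\end{align*}
with $C(\tau)$ independent of $n$.

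From this uniform bound on the time derivative, I would bootstrap through the bulk-surface elliptic theory of Section~\ref{Section:EBS}. At fixed $t \ge \tau$, equation \eqref{WF:PP:SING} is a bulk-surface elliptic system for $(\mu_n,\theta_n)$ with uniformly elliptic coefficients $m_\Om^n(\phi_n), m_\Ga^n(\psi_n)$ and right-hand side controlled in $(\mathcal{H}^1_L)'$, yielding $(\mu_n,\theta_n) \in L^\infty(\tau,\infty;\mathcal{H}^1_L)$ uniformly in $n$. Interpreting \eqref{WF:MT:SING} as an elliptic system for $(\phi_n,\psi_n)$ with source $(\mu_n - F_2'(\phi_n), \theta_n - G_2'(\psi_n))$, testing with $|F_1'(\phi_n)|^{p-2}F_1'(\phi_n)$ and $|G_1'(\psi_n)|^{p-2}G_1'(\psi_n)$, and absorbing the coupling via the domination \eqref{Ass:Potentials:Domination}, then gives uniform bounds $\norm{F'(\phi_n)}_{L^p(\Om)} + \norm{G'(\psi_n)}_{L^p(\Ga)} \le C_p$ for all $p<\infty$, and consequently $(\phi_n,\psi_n) \in L^\infty(\tau,\infty;\mathcal{W}^{2,p})$ uniformly. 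The instantaneous separation property is then obtained from assumption~\ref{Ass:Potentials:Sep} through an iteration argument adapted to the bulk-surface setting, in the spirit of \cite{Conti2025,Giorgini2025}. Under the stronger hypothesis $m_\Om,m_\Ga \in C^2([-1,1])$, integrating the dissipative term in the differential inequality yields $(\delt\phi_n,\delt\psi_n) \in L^2_{\mathrm{uloc}}([\tau,\infty);\mathcal{H}^1)$, and a further invocation of Section~\ref{Section:EBS} upgrades $(\mu_n,\theta_n)$ to $L^2_{\mathrm{uloc}}([\tau,\infty);\mathcal{H}^3)$.

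The principal obstacle is the derivation of the differential inequality uniformly in $n$. Differentiating the equation in time generates non-constant-mobility terms of the form $m_\Om'(\phi_n)\,\delt\phi_n\,\Grad\mu_n$ in the bulk, and analogous boundary contributions, which cannot be absorbed by the leading dissipation in a dimension-independent way. Controlling them requires combining the $\mathcal{H}^2$-regularity of the phase-fields with the two-dimensional interpolation inequality \eqref{Prelim:Est:Inteprol}, and it is precisely this step that enforces the restriction to two spatial dimensions, as in \cite{Conti2025}.
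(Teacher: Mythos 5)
Your overall architecture (mollify the mobilities in $C^2([-1,1])$, derive $n$-uniform estimates, pass to the limit by compactness) matches the skeleton of the paper's Step~2, but the central step fails: the differential inequality for $\norm{(\delt\phi_n,\delt\psi_n)}_{L,\ast}^2$ ``of the same nature as the one driving the uniqueness proof'' cannot hold with constants uniform in $n$. In that inequality the correction term coming from the time dependence of the solution operator, namely $\tfrac12\big(\mathcal{S}(\delt\phi_n,\delt\psi_n),(m_\Om'(\phi_n)\abs{\Grad\mathcal{S}^\Om}^2,\,m_\Ga'(\psi_n)\abs{\Gradg\mathcal{S}^\Ga}^2)\big)_{L}$, must be paired through the $(\cdot,\cdot)_{L}$ inner product precisely because $(\delt\phi_n,\delt\psi_n)$ is a priori only in $(\mathcal{H}^1_L)^\prime$; taking the gradient of $m_\Om'(\phi_n)\abs{\Grad\mathcal{S}^\Om}^2$ produces $m_\Om''(\phi_n)\Grad\phi_n\abs{\Grad\mathcal{S}^\Om}^2$ (compare the estimates around \eqref{Est:Claim:1} and the bound of $I_1$ in the proof of Theorem~\ref{Theorem}), so the Gronwall rate carries $\norm{m_\Om^n}_{C^2}+\norm{m_\Ga^n}_{C^2}$. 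If $m^n\to m$ uniformly with $\sup_n\norm{(m^n)''}_{\infty}<\infty$, then $m'$ would be Lipschitz; hence for a genuinely $C^1$ mobility one necessarily has $\norm{m^n}_{C^2}\to\infty$, and your claimed uniform integrability of $g_n,h_n$ breaks down. Your closing paragraph identifies only the first-order terms $m_\Om'(\phi_n)\delt\phi_n\Grad\mu_n$ as the obstacle and asserts that interpolation handles them; it does not, since absorbing $\delt\phi_n$ against a quadratic gradient term requires either $\delt\phi_n$ in a Lebesgue space (not available uniformly before the estimate is closed) or an integration by parts that reintroduces $m''$.

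The paper escapes this by changing the functional in the $C^1$ regime: instead of $\norm{(\delt\phi,\delt\psi)}_{L,\ast}^2$ it runs a differential inequality for the dissipation energy $\intO m_{\Om,k}(\phi_k)\abs{\Grad\mu_k}^2\dx+\intG m_{\Ga,k}(\psi_k)\abs{\Gradg\theta_k}^2\dG+\chi(L)\intG(\beta\theta_k-\mu_k)^2\dG$, made rigorous by the bulk-surface chain rule of Proposition~\ref{App:Proposition:CR}. Differentiating this quantity produces only first derivatives $m_{\Om,k}'$, $m_{\Ga,k}'$, which are uniformly controlled by property \ref{M3} — note that your mollification is too weak here, as you impose only uniform $C^0$ bounds whereas uniform $C^1$ bounds and $C^1$ convergence are needed — and the dangerous term $\intO m_{\Om,k}'(\phi_k)\delt\phi_k\abs{\Grad\mu_k}^2\dx$ is absorbed into the dissipation $\norm{(\delt\phi_k,\delt\psi_k)}_{K}^2$, which appears on the left because the time-differentiated $\mu$-equation is tested with $(\delt\phi_k,\delt\psi_k)$, exploiting the convexity of $F_1,G_1$ and the bound $(F''(\phi_k),G''(\psi_k))\in L^\infty(\tau,\infty;\mathcal{L}^p)$ guaranteed by \ref{Ass:Potentials:Sep} — which your proposal invokes only for the separation property, missing its role in making $(\delt\mu_k,\delt\theta_k)$ meaningful. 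Two further mismatches with the theorem: your dissipation $c\norm{(\delt\phi_n,\delt\psi_n)}_{\mathcal{L}^2}^2$ is too weak to yield $(\delt\phi,\delt\psi)\in L^2_{\mathrm{uloc}}([\tau,\infty);\mathcal{H}^1)$, which \eqref{PropReg:tau:1} asserts already for $C^1$ mobilities (the paper's inequalities \eqref{PreGronwall:G:Delth} and \eqref{Est:PreGronwall:HighReg:k:2} carry the full $K$-seminorm); and the difference-quotient argument applied to \eqref{Est:PreGronwall:Uniq} is still indispensable as a preliminary step (the paper's Step~1) to obtain, for each fixed $k$ with non-uniform constants, the qualitative regularity that legitimizes the Step~2 computation.
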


\begin{remark}
    In addition to the regularities stated in \eqref{PropReg:tau:1}-\eqref{PropReg:tau:2}, we have for any $\tau > 0$ that
    \begin{align}\label{PropReg:F'(psi)}
        F^\prime(\psi)\in L^\infty(\tau,\infty;L^p(\Ga)),
    \end{align}
    see \cite{Lv2024a, Lv2024b}.
\end{remark}

\begin{remark}
    In Theorem~\ref{Theorem:PropReg} we assume that the mobilities satisfy $m_\Om, m_\Ga\in C^1([-1,1])$, which guarantees the existence of at least one weak solution enjoying the regularity properties \eqref{PropReg:tau:1}-\eqref{PropReg:tau:2}. If, in addition, we assume $m_\Om,m_\Ga\in C^2([-1,1])$, then this weak solution is the unique weak solution provided by Theorem~\ref{Theorem}. In this case, the assumption \ref{Ass:Potentials:Sep} on the potential $F$ can also be dropped.
    
   Nevertheless, for certain choices of the parameters $K$ and $L$, a weak-strong uniqueness result can be established under the weaker assumption $m_\Om, m_\Ga\in C^1([-1,1])$. In particular, for such parameters and for any $\tau > 0$, every weak solution $(\phi,\psi,\mu,\theta)$ satisfying the regularity properties \eqref{PropReg:tau:1}-\eqref{PropReg:tau:2} coincides on $[\tau,\infty)$ with the unique strong solution having initial data $(\phi(\tau),\psi(\tau))$.
\end{remark}

As a consequence, we can prove that the weak solution from Theorem~\ref{Theorem:PropReg} satisfies the instantaneous separation property.

\begin{theorem}\label{Theorem:Separation}
Suppose that the assumptions from Theorem~\ref{Theorem:PropReg} hold, and consider the corresponding weak solution $(\phi,\psi,\mu,\theta)$ that satisfies the propagation of regularity. Then, for all $\tau > 0$, there exists $\delta > 0$, additionally depending on the norms of the initial data, such that
\begin{align}\label{Separation:tau}
    \norm{\phi(t)}_{L^\infty(\Om)} \leq 1 - \delta, \quad \norm{\psi(t)}_{L^\infty(\Ga)} \leq 1 - \delta \qquad\text{for all~}t\geq\tau.
\end{align}
\end{theorem}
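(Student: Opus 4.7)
The plan is to deduce the uniform separation \eqref{Separation:tau} from the propagation of regularity provided by Theorem~\ref{Theorem:PropReg}, adapting to the bulk-surface setting the approach pioneered by \cite{Conti2025} for the Cahn--Hilliard equation with Neumann boundary conditions. The key is to combine a Moser-type iteration on the elliptic identities \eqref{EQ:SYSTEM:2} and \eqref{EQ:SYSTEM:4} with the growth hypothesis \ref{Ass:Potentials:Sep} and the Trudinger--Moser inequality in two dimensions. First, from Theorem~\ref{Theorem:PropReg}, $(\mu,\theta)\in L^\infty(\tau,\infty;\mathcal{H}^1_L)$ and these identities hold pointwise a.e. on $\Omega\times(\tau,\infty)$ and $\Gamma\times(\tau,\infty)$, respectively. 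In dimension two, Trudinger--Moser yields uniform exponential integrability of $\mu$, namely $\sup_{t\geq\tau}\int_\Omega\exp(\sigma\,\mu(t)^2)\dx<\infty$ for some $\sigma>0$, while on the one-dimensional boundary $\Gamma$ the embedding $H^1(\Gamma)\hookrightarrow L^\infty(\Gamma)$ directly gives $\theta\in L^\infty(\tau,\infty;L^\infty(\Gamma))$.

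Second, for $p\geq 2$ I would test \eqref{EQ:SYSTEM:2} against $|F_1'(\phi)|^{p-2}F_1'(\phi)$ and \eqref{EQ:SYSTEM:4} against $|G_1'(\psi)|^{p-2}G_1'(\psi)$, then sum the resulting identities. Integration by parts produces nonnegative gradient contributions weighted by $F_1''$ and $G_1''$, controlled from below via \eqref{Assumption:Pot:Convexity}, while the boundary terms generated by $\partial_\n\phi$ are reabsorbed through the coupling \eqref{EQ:SYSTEM:5} together with the domination condition \eqref{Ass:Potentials:Domination}; the Lipschitz perturbations $F_2',G_2'$ are handled by Young's inequality. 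On the right-hand side, the exponential integrability of $\mu$ paired with the Gagliardo--Nirenberg--Sobolev interpolation \eqref{Prelim:Est:Inteprol} and the $L^\infty$-bound on $\theta$ allow us to control the duality pairing by a factor at most $C\sqrt{p}$. Absorbing the $L^p$-terms on the left, this yields
\[
\|F_1'(\phi(t))\|_{L^p(\Omega)} + \|G_1'(\psi(t))\|_{L^p(\Gamma)} \leq C\sqrt{p}, \qquad \text{for all } t\geq\tau, \ p\geq 2,
\]
with $C$ independent of $p$ and $t$.

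Third, this growth-controlled $L^p$ estimate is converted into pointwise separation via \ref{Ass:Potentials:Sep}. Under \ref{Ass:Potentials:Sep:2}, if $1-|\phi(t,x)|<\delta$ on a set $A_\delta\subset\Omega$ of positive measure, then $|F_1'(\phi)|\geq c|\ln\delta|^\kappa$ on $A_\delta$ with $\kappa>1/2$, and comparing with the $L^p$-bound yields $|A_\delta|\,c^p|\ln\delta|^{p\kappa}\leq C^p p^{p/2}$. Optimizing in $p$ (roughly $p\sim|\ln\delta|^{2\kappa}$) forces $|A_\delta|=0$ whenever $\delta<\delta_0$ for some $\delta_0>0$, giving the bulk separation. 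Under \ref{Ass:Potentials:Sep:1} the reasoning is analogous, the $L^p$-bounds being summed to obtain an Orlicz-type estimate of the form $\sup_t\int_\Omega\exp(\sigma|F_1'(\phi)|^{2-\gamma_\sharp})\dx<\infty$, with $2-\gamma_\sharp>0$, from which the same level-set argument closes. Separation of $\psi$ follows by an analogous iteration on $\Gamma$, or alternatively is transferred from the bulk trace using \ref{Ass:Potentials:Domination}.

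The main obstacle will be carrying out the Moser iteration in a coupled fashion so that all constants remain independent of $p$ uniformly in time for $t\geq\tau$. In particular, one must carefully balance the boundary contributions generated by $\partial_\n\phi$ in \eqref{EQ:SYSTEM:4}, the coupling term $\chi(L)(\beta\theta-\mu)$, and the exponential-integrability constant furnished by Trudinger--Moser, so that the right-hand side only picks up the $\sqrt{p}$ prefactor dictated by \eqref{Prelim:Est:Inteprol}. Once this bound is established, the passage from the controlled $L^p$-norms to the pointwise separation is standard.
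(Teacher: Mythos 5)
Your preparatory steps are sound and, in effect, reproduce bounds the paper already has: $(\mu,\theta)\in L^\infty(\tau,\infty;\mathcal{H}^1_L)$ together with the Trudinger--Moser inequality in two dimensions (and $H^1(\Ga)\emb L^\infty(\Ga)$ on the one-dimensional boundary) does give uniform exponential integrability of $\mu$, and the Moser-type testing of \eqref{EQ:SYSTEM:2} and \eqref{EQ:SYSTEM:4} with $\abs{F_1'(\phi)}^{p-2}F_1'(\phi)$ and $\abs{G_1'(\psi)}^{p-2}G_1'(\psi)$ yields $\norm{F_1'(\phi(t))}_{L^p(\Om)}+\norm{G_1'(\psi(t))}_{L^p(\Ga)}\leq C\sqrt{p}$ uniformly for $t\geq\tau$; this is essentially \eqref{Est:POT:Lp:a.e.} combined with the uniform bound \eqref{Est:HighReg:MT:H1}. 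The genuine gap is your third step. From $\norm{F_1'(\phi(t))}_{L^p(\Om)}\leq C\sqrt{p}$, Chebyshev's inequality on $A_\delta=\{1-\abs{\phi(t)}<\delta\}$ with the optimal choice $p\sim\abs{\ln\delta}^{2\kappa}$ gives only $\abs{A_\delta}\leq \exp\big(-c\abs{\ln\delta}^{2\kappa}\big)$: superpolynomially small (this is exactly where $\kappa>\tfrac12$ enters), but strictly positive for every $\delta>0$. Exponential-square integrability of $F_1'(\phi)$ never implies $F_1'(\phi)\in L^\infty$, so no static level-set argument of this type can ``force $\abs{A_\delta}=0$.'' The same objection invalidates your \ref{Ass:Potentials:Sep:1} branch: an Orlicz bound of the form $\sup_t\intO\exp\big(\sigma\abs{F_1'(\phi)}^{2-\gamma_\sharp}\big)\dx<\infty$ again controls measures of level sets, not the supremum (and, incidentally, summing the $L^p$ bounds gives exponent $2$, not $2-\gamma_\sharp$; the exponent $\gamma_\sharp$ is meant to act on $F_1''$, see below).

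What is actually needed, and what the paper does, is different in each case. Under \ref{Ass:Potentials:Sep:1}, the hypothesis $\abs{F_1''}\leq C_\sharp e^{C_\sharp\abs{F_1'}^{\gamma_\sharp}}$ with $\gamma_\sharp<2$ is used to bound $F_1''(\phi)$, not to refine the level-set estimate: since $\gamma_\sharp<2$, the exponential-square integrability of $F_1'(\phi)$ yields $F_1''(\phi)\in L^\infty(\tau,\infty;L^p(\Om))$ for every $p<\infty$; then $\Grad F_1'(\phi)=F_1''(\phi)\Grad\phi$ together with $(\phi,\psi)\in L^\infty(\tau,\infty;\mathcal{W}^{2,p})$ gives $F_1'(\phi)\in L^\infty(\tau,\infty;W^{1,3}(\Om))$, and the two-dimensional embedding $W^{1,3}(\Om)\emb C(\overline\Om)$ produces a genuine uniform $L^\infty$ bound $C_\ast$ on $F_1'(\phi)$, from which separation follows by inverting $F_1'$, i.e., $\delta=1-(F_1')^{-1}(C_\ast)$; the argument on $\Ga$ is analogous. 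Under \ref{Ass:Potentials:Sep:2}, no purely elliptic argument suffices: the paper upgrades the superpolynomial measure smallness to actual vanishing through a De Giorgi-type iteration that exploits the dissipative (parabolic) structure of \eqref{EQ:SYSTEM}, following \cite{Gal2025} (already adapted to dynamic boundary conditions in \cite{Lv2024a,Lv2024b}). So your scheme must be repaired by inserting the $W^{1,p}\emb L^\infty$ mechanism in the \ref{Ass:Potentials:Sep:1} case, and replaced by the De Giorgi iteration in the \ref{Ass:Potentials:Sep:2} case; as it stands, the passage from the $C\sqrt{p}$-controlled $L^p$ norms to \eqref{Separation:tau} is not ``standard'' but is precisely the hard point of the theorem.
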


\begin{remark}
    If there exists $(\mu_0,\theta_0)\in\mathcal{H}^1_L$ such that
    \begin{align*}
        \intO\mu_0\;\eta\dx + \intG\theta_0\;\vartheta\dG &= \intO \Grad\phi_0\cdot\Grad\eta + F^\prime(\phi_0)\dx + \intG \Gradg\psi_0\cdot\Gradg\vartheta + G^\prime(\psi_0)\dG \\
        &\quad + \chi(K) \intG (\alpha\psi_0 - \phi_0)(\alpha\vartheta - \eta)\dG
    \end{align*}
    for all $(\eta,\vartheta)\in\mathcal{H}^1_K$, then the weak solution $(\phi,\psi,\mu,\theta)$ from Theorem~\ref{Theorem:PropReg} is a strong solution, namely, \eqref{PropReg:tau:1}-\eqref{PropReg:tau:2} hold for $\tau = 0$. Furthermore, the separation property \eqref{Separation:tau} holds also for $\tau = 0$.
\end{remark}

The proofs of Theorem~\ref{Theorem:PropReg} and Theorem~\ref{Theorem:Separation} are presented in Section~\ref{Section:PropRegSep}.

\subsection{Long-time behavior.}
Thanks to the separation property proven in Theorem~\ref{Theorem:Separation}, we can show that the unique weak solution converges to a single equilibrium as $t\rightarrow\infty$.

\begin{theorem}\label{Theorem:LongTime}
    Suppose that the assumptions from Theorem~3.4 hold. In addition, assume that $m_\Om,m_\Ga\in C^2([-1,1])$ satisfy \ref{Ass:Mobility}, and that $F_1, G_1$ are real analytic on $(-1,1)$ and $F_2, G_2$ are real analytic on $\R$. Let $(\phi,\psi,\mu,\theta)$ be the unique global weak solution obtained in Theorem~\ref{Theorem}. Then it holds
    \begin{align*}
        \lim_{t\rightarrow\infty}\norm{(\phi(t) - \phi_\infty,\psi(t) - \psi_\infty)}_{\mathcal{H}^2} = 0,
    \end{align*}
    where $(\phi_\infty,\psi_\infty)\in\mathcal{H}^2$ is a solution to the stationary bulk-surface Cahn--Hilliard equation
    \begin{subequations}\label{SYSTEM:EQUILIBRIUM}
        \begin{alignat*}{2}
            -\Lap\phi_\infty + F^\prime(\phi_\infty) &= \mu_\infty &&\qquad\text{in~}\Om, \\
            -\Lapg\psi_\infty + G^\prime(\psi_\infty) + \alpha\deln\phi_\infty &= \theta_\infty &&\qquad\text{on~}\Ga, \\
            K\deln\phi_\infty &= \alpha\psi_\infty - \phi_\infty &&\qquad\text{on~}\Ga,
        \end{alignat*}
    \end{subequations}
    with
    \begin{align*}
        \begin{dcases}
            \beta\intO \phi_\infty\dx + \intG \psi_\infty\dG = \beta\intO \phi_0 \dx + \intG \psi_0\dG, &\textnormal{if } L\in[0,\infty), \\
            \intO\phi_\infty\dx = \intO\phi_0\dx \quad\textnormal{and}\quad \intG\psi_\infty\dG = \intG\psi_0\dG, &\textnormal{if } L = \infty.
        \end{dcases}
    \end{align*}
    The constants $\mu_\infty$ and $\theta_\infty$ appearing in \eqref{SYSTEM:EQUILIBRIUM} satisfy
    \begin{align*}
        \mu_\infty = \beta\theta_\infty = \frac{\beta}{\alpha\beta\abs{\Om} + \abs{\Ga}}\Big(\alpha\intO F^\prime(\phi_\infty)\dx + \intG G^\prime(\psi_\infty)\dG\Big)
    \end{align*}
    if $L\in[0,\infty)$, while in the case $L = \infty$, they are given by
    \begin{align*}
        \mu_\infty &= \frac{1}{\abs{\Om}}\Big(\intO F^\prime(\phi_\infty)\dx - \intG \deln\phi_\infty\dG\Big), \\
        \theta_\infty &= \frac{1}{\abs{\Ga}}\Big(\intG G^\prime(\psi_\infty) + \alpha\deln\phi_\infty\dG\Big).
    \end{align*}
\end{theorem}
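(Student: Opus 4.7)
The plan is to carry out a standard {\L}ojasiewicz--Simon argument, adapted to the bulk-surface setting, using the separation property from Theorem~\ref{Theorem:Separation} as the key ingredient that permits us to treat $F$ and $G$ as smooth (in fact analytic) in a neighborhood of the trajectory.

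First I would derive the basic qualitative facts about the trajectory. The energy inequality \eqref{WEDL:SING}, together with the lower bound \eqref{Ass:Mobility:Bound} on $m_\Om, m_\Ga$, yields
\begin{align*}
\int_\tau^\infty \Bigl( \norm{\Grad\mu(t)}_{L^2(\Om)}^2 + \norm{\Gradg\theta(t)}_{L^2(\Ga)}^2 + \chi(L)\norm{\beta\theta(t)-\mu(t)}_{L^2(\Ga)}^2 \Bigr) \dt < \infty.
\end{align*}
Combining this with the uniform-in-time bound $(\phi,\psi)\in L^\infty(\tau,\infty;\mathcal{W}^{2,p})$ from Theorem~\ref{Theorem:PropReg} and Rellich--Kondrachov, the trajectory $\{(\phi(t),\psi(t))\}_{t\geq\tau}$ is precompact in $\mathcal{H}^2$. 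The separation property \eqref{Separation:tau} means $F^\prime,F^{\prime\prime},G^\prime,G^{\prime\prime}$ are uniformly bounded along the trajectory, and $F,G$ are in fact real analytic on a compact interval $[-1+\delta,1-\delta]$ containing the range of $(\phi,\psi)$ for $t\geq\tau$.

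Next, I would characterize the $\omega$-limit set $\omega(\phi_0,\psi_0)$. Given any sequence $t_n\to\infty$, by precompactness (along a subsequence) $(\phi(t_n),\psi(t_n))\to(\phi_\infty,\psi_\infty)$ in $\mathcal{H}^2$. Because the tail of the dissipation integral vanishes, one can find translated sequences $(\phi(\cdot+t_n),\psi(\cdot+t_n))$ converging on $[0,1]$ so that the limit $(\mu_\infty,\theta_\infty)$ is constant in space (and satisfies $\mu_\infty=\beta\theta_\infty$ when $L\in(0,\infty)$, while the two constants are independent when $L=\infty$, and when $L=0$ one has $\mu=\beta\theta$ on $\Ga$). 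Passing to the limit in the weak formulation \eqref{WF:MT:SING} shows that $(\phi_\infty,\psi_\infty)$ solves \eqref{SYSTEM:EQUILIBRIUM}. Testing the stationary equations with $(1,0)$ and $(0,1)$ and using \eqref{EQ:SYSTEM:5}-\eqref{EQ:SYSTEM:6} identifies $\mu_\infty,\theta_\infty$ as claimed. The mass constraint on $(\phi_\infty,\psi_\infty)$ follows from the conservation law \eqref{MCL:SING}. Moreover, by the energy inequality, the Lyapunov functional $E(\phi(t),\psi(t))$ is non-increasing and bounded below, so it converges to some $E_\infty$, and every element of $\omega(\phi_0,\psi_0)$ has the same energy $E_\infty$.

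The main obstacle is establishing the {\L}ojasiewicz--Simon inequality in the bulk-surface setting. I would define the constrained energy functional $\widetilde E = E\big|_{\mathcal{W}_{K,L,m}}$ where $m$ is the conserved mean, so that its (constrained) critical points coincide with solutions of \eqref{SYSTEM:EQUILIBRIUM}. Using the analyticity of $F_1,G_1,F_2,G_2$ together with the fact that the separation keeps the trajectory uniformly away from $\pm 1$, the map $(\phi,\psi)\mapsto(F^\prime(\phi),G^\prime(\psi))$ is analytic $\mathcal{H}^2\cap\mathcal{W}_{K,L,m}\to\mathcal{L}^2$ in a neighborhood of any point of $\omega(\phi_0,\psi_0)$. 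The linearization of the stationary problem around an equilibrium is a compact perturbation of an isomorphism between the appropriate spaces, hence Fredholm of index zero. By the abstract {\L}ojasiewicz--Simon theorem in a Hilbert space setting (see, e.g., Chill's framework), there exist $\sigma\in(0,\tfrac12]$, $C>0$, and a neighborhood $\mathcal{U}$ of $(\phi_\infty,\psi_\infty)$ in $\mathcal{H}^1\cap\mathcal{W}_{K,L,m}$ such that
\begin{align*}
|E(\phi,\psi) - E(\phi_\infty,\psi_\infty)|^{1-\sigma} \leq C \,\norm{D_{\mathcal{W}_{K,L,m}} E(\phi,\psi)}_{(\mathcal{H}_L^1)^\prime}
\qquad\text{for } (\phi,\psi)\in\mathcal{U}.
\end{align*}

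Finally, I would couple this with the energy identity, which reads (at least formally, as $\delt(\phi,\psi)=-D_{\mathcal{W}_{K,L,m}}E(\phi,\psi)$ in the appropriate gradient-flow sense)
\begin{align*}
-\ddt E(\phi,\psi) \gtrsim \norm{(\delt\phi,\delt\psi)}_{(\mathcal{H}^1_L)^\prime}^2,
\end{align*}
using the lower bound on the mobilities and the invertibility of the bulk-surface Laplace solution operator $\mathcal{S}_L$ restricted to the zero-mean subspace. Covering the compact $\omega$-limit set by finitely many neighborhoods $\mathcal{U}$, a standard ODE argument then shows $\int_\tau^\infty\norm{(\delt\phi,\delt\psi)}_{(\mathcal{H}^1_L)^\prime}\dt<\infty$, whence $(\phi(t),\psi(t))$ is Cauchy in $(\mathcal{H}^1_L)^\prime$ as $t\to\infty$ and converges to a single limit $(\phi_\infty,\psi_\infty)$. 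Upgrading this weak convergence to convergence in $\mathcal{H}^2$ is done by interpolating with the uniform bound in $\mathcal{W}^{2,p}$ and using the precompactness established earlier, which forces the whole trajectory (and not merely subsequences) to converge in $\mathcal{H}^2$.
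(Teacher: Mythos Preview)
Your proposal is correct and follows essentially the same {\L}ojasiewicz--Simon strategy as the paper: establish precompactness of the trajectory in $\mathcal{H}^2$, characterize the $\omega$-limit set as stationary points with constant chemical potentials, invoke a {\L}ojasiewicz--Simon inequality (the paper states it as Lemma~\ref{Lemma:LS} with references rather than sketching Chill's framework), and combine it with the energy identity to obtain $L^1$-in-time integrability of the dissipation, yielding convergence to a single equilibrium. The paper phrases the {\L}ojasiewicz--Simon inequality with the $\mathcal{L}^2$-norm of the projected gradient $\mathbf{P}_L(\mu,\theta)$ rather than the $(\mathcal{H}^1_L)'$-norm you use, but these are equivalent for the argument.
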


\medskip
\section{Elliptic bulk-surface system with non-constant coefficients}
\label{Section:EBS}

In this section, we establish well-posedness and regularity results for an elliptic system with bulk-surface coupling and non-constant coefficients. These regularity results will be of crucial importance in the proof of the uniqueness of weak solutions to the system \eqref{EQ:SYSTEM}. Let $\Om\subset\R^d$, $d=2,3$, be a bounded domain with boundary $\Ga \coloneqq \partial\Om$. The precise system under investigation in this section is the following
\begin{subequations}\label{SYSTEM:EBS:MOB}
    \begin{alignat}{2}
        -\Div(m_\Om(\phi)\Grad u) &= f &&\qquad\text{in~}\Om, \\
        -\Divg(m_\Ga(\psi)\Gradg v) + \beta m_\Om(\phi)\deln u &= g &&\qquad\text{on~}\Ga, \\ 
        Lm_\Om(\phi)\deln u &= \beta v - u &&\qquad\text{on~}\Ga,
    \end{alignat}
\end{subequations}
where the mobility functions $m_\Om, m_\Ga$ are supposed to satisfy \ref{Ass:Mobility}.
Moreover, $\phi:\Om\rightarrow\R$ and $\psi:\Ga\rightarrow\R$ are given measurable functions with $\abs{\phi} \leq 1$ a.e. in $\Om$ and $\abs{\psi} \leq 1 $ a.e. on $\Ga$. System \eqref{SYSTEM:EBS:MOB} can be seen as an extension of \eqref{BSE} and the corresponding results proven in \cite{Knopf2021}.

We call $(u,v)\in\mathcal{H}^1_L$ a weak solution to \eqref{SYSTEM:EBS:MOB} if it satisfies the following weak formulation
\begin{align}\label{WF:EBS}
    \begin{split}
        &\intO m_\Om(\phi)\Grad u \cdot\Grad\zeta\dx + \intG m_\Ga(\psi)\Gradg v \cdot\Gradg\xi \dG + \chi(L)\intG (\beta v - u)(\beta\xi - \zeta)\dG \\
        &\quad = \bigang{(f,g)}{(\zeta,\xi)}_{\mathcal{H}^1_L}
    \end{split}
\end{align}
for all $(\zeta,\xi)\in\mathcal{H}^1_L$.

In our first result, we establish the existence of a unique weak solution to \eqref{SYSTEM:EBS:MOB}.

\begin{theorem}\label{Theorem:EBS:WS}
    Assume that $\Om$ is a Lipschitz domain, and let $(f,g)\in\mathcal{V}_{L}^{-1}$. Then there exists a unique weak solution $(u,v)\in\mathcal{H}^1_L$ to \eqref{SYSTEM:EBS:MOB}. Additionally, there exists a constant $C > 0$, depending only on $\Om, L, \beta$ and $m^\ast$ such that
    \begin{align}\label{Est:EBS:Apriori}
    	\norm{(u,v)}_{L} \leq C\norm{(f,g)}_{(\mathcal{H}^1_L)^\prime}.
    \end{align}
\end{theorem}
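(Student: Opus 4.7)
The natural plan is to apply the Lax--Milgram theorem on the Hilbert space $\mathcal{V}^1_L$, since the bilinear form associated with \eqref{WF:EBS} is degenerate on the one- (or two-) dimensional kernel consisting of the generalized constants $(c\beta,c)$ when $L \in [0,\infty)$, and $(c_1,c_2)$ when $L = \infty$. The compatibility condition $(f,g) \in \mathcal{V}^{-1}_L$ is exactly what makes the problem well-posed on this subspace.

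Concretely, I would define the bilinear form $a:\mathcal{V}^1_L\times\mathcal{V}^1_L\to\R$ by
\begin{align*}
a\bigscp{\scp{u}{v}}{\scp{\zeta}{\xi}} &\coloneqq \intO m_\Om(\phi)\Grad u\cdot\Grad\zeta\dx + \intG m_\Ga(\psi)\Gradg v\cdot\Gradg\xi\dG \\
&\quad + \chi(L)\intG(\beta v-u)(\beta\xi-\zeta)\dG,
\end{align*}
and verify continuity and coercivity directly from assumption \ref{Ass:Mobility}. Continuity follows from the upper bound $m_\Om,m_\Ga\leq M^*$ combined with Cauchy--Schwarz, yielding $|a((u,v),(\zeta,\xi))|\leq M^*\norm{(u,v)}_L\norm{(\zeta,\xi)}_L$. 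For coercivity, the lower bound $m_\Om,m_\Ga\geq m^*$ gives $a((u,v),(u,v))\geq m^*\norm{(u,v)}_L^2$, and the equivalence of $\norm{\cdot}_L$ with $\norm{\cdot}_{\mathcal{H}^1}$ on $\mathcal{V}^1_L$ (already recalled in the excerpt and referring to \cite[Corollary~A.2]{Knopf2021}) transfers coercivity to the ambient Hilbert norm. The linear form $(\zeta,\xi)\mapsto\bigang{(f,g)}{(\zeta,\xi)}_{\mathcal{H}^1_L}$ is bounded on $\mathcal{V}^1_L\subset\mathcal{H}^1_L$ because $(f,g)\in\mathcal{V}^{-1}_L\subset(\mathcal{H}^1_L)^\prime$, so Lax--Milgram produces a unique $(u,v)\in\mathcal{V}^1_L$ satisfying \eqref{WF:EBS} for all test functions in $\mathcal{V}^1_L$.

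To upgrade to arbitrary test functions $(\zeta,\xi)\in\mathcal{H}^1_L$, I would decompose $(\zeta,\xi)=(\zeta_0,\xi_0)+(\zeta_c,\xi_c)$ where $(\zeta_0,\xi_0)\in\mathcal{V}^1_L$ and $(\zeta_c,\xi_c)$ lies in the kernel of $\norm{\cdot}_L$ on $\mathcal{H}^1_L$. By construction $(\zeta_c,\xi_c)=m(\beta,1)$ with $m=\mean{\zeta}{\xi}$ for $L\in[0,\infty)$ (noting $(\beta,1)\in\mathcal{H}^1_0$ trivially), or $(\zeta_c,\xi_c)=(\meano{\zeta},\meang{\xi})$ for $L=\infty$. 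On such kernel elements the left-hand side of \eqref{WF:EBS} vanishes identically since gradients are zero and $\beta\xi_c-\zeta_c=0$ in the first case while $\chi(L)=0$ in the second; the right-hand side vanishes because the definition of $\mathcal{V}^{-1}_L$ encodes exactly $\bigang{(f,g)}{(\beta,1)}_{\mathcal{H}^1_L}=0$ (respectively $\meano{f}=\meang{g}=0$). Finally, testing \eqref{WF:EBS} with $(\zeta,\xi)=(u,v)$ and using coercivity together with the equivalence $\norm{\cdot}_{(\mathcal{H}^1_L)^\prime}\sim\norm{\cdot}_{L,\ast}$ on $\mathcal{V}^{-1}_L$ yields the bound \eqref{Est:EBS:Apriori}.

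I do not anticipate any serious obstacle here, since the coefficients $m_\Om(\phi)$ and $m_\Ga(\psi)$ are merely required to be measurable and pointwise pinched between $m^*$ and $M^*$, which is exactly the amount of regularity needed for a Lax--Milgram argument. The only point requiring mild care is bookkeeping across the three regimes $L=0$, $L\in(0,\infty)$, $L=\infty$, so that the definitions of $\mathcal{H}^1_L$, $\mathcal{V}^1_L$, and $\chi(L)$ are used consistently; once this is done, coercivity and the reduction of test functions to $\mathcal{V}^1_L$ are uniform across cases.
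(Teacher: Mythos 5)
Your proposal is correct and takes essentially the same route as the paper, which omits the proof precisely because it is the Lax--Milgram argument from the constant-coefficient case \cite[Theorem~3.3]{Knopf2021} that you spell out: coercivity/continuity on $\mathcal{V}^1_L$ from \ref{Ass:Mobility} (with constants $\min\{1,m^\ast\}$ and $\max\{1,M^\ast\}$, since the $\chi(L)$-term carries no mobility weight), extension of test functions via the generalized-constant kernel using $(f,g)\in\mathcal{V}^{-1}_L$, and testing with $(u,v)$ for \eqref{Est:EBS:Apriori}. One trivial remark: for the final estimate you only need the primal equivalence $\norm{\cdot}_{L}\sim\norm{\cdot}_{\mathcal{H}^1}$ on $\mathcal{V}^1_L$ to bound $\bigang{(f,g)}{(u,v)}_{\mathcal{H}^1_L}\leq C\norm{(f,g)}_{(\mathcal{H}^1_L)^\prime}\norm{(u,v)}_{L}$, not the dual-norm equivalence $\norm{\cdot}_{(\mathcal{H}^1_L)^\prime}\sim\norm{\cdot}_{L,\ast}$ you invoke.
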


We omit the proof here, as it follows by analogous arguments to those used in the case of constant coefficients, see \cite[Theorem~3.3]{Knopf2021} for details.

In view of Theorem~\ref{Theorem:EBS:WS}, we can define a solution operator 
\begin{align*}
    \mathcal{S}_{L}[\phi,\psi]:\mathcal{V}_{L}^{-1}\rightarrow\mathcal{V}_{L}^1, \quad(f,g)\mapsto\mathcal{S}_{L}[\phi,\psi](f,g) = (\mathcal{S}_{L}^\Om[\phi,\psi](f,g),\mathcal{S}_{L}^\Ga[\phi,\psi](f,g)),
\end{align*}
where $\mathcal{S}_L[\phi,\psi](f,g)$ is the unique weak solution to \eqref{SYSTEM:EBS:MOB}. Moreover, we can define an inner product and its induced norm on $\mathcal{V}^1_{L}$ by
\begin{align*}
    \big((f,g),(\zeta,\xi)\big)_{L,[\phi,\psi]} &\coloneqq \intO m_\Om(\phi)\Grad f\cdot\Grad\zeta\dx + \intG m_\Ga(\psi)\Gradg g\cdot\Gradg\xi\dG \\
    &\quad + \chi(L)\intG (\beta g - f)(\beta\xi - \zeta)\dG, \\
    \norm{(f,g)}_{L,[\phi,\psi]} &\coloneqq \big((f,g),(f,g)\big)_{L,[\phi,\psi]}^{\frac12}
\end{align*}
for all $(f,g), (\zeta,\xi)\in\mathcal{V}^1_{L}$.
One readily sees that
\begin{align}\label{NormEquivalence:1}
    \min\{1,\sqrt{m^\ast}\}\norm{(f,g)}_{L,[\phi,\psi]} \leq \norm{(f,g)}_{L} \leq \max\{1,\sqrt{M^\ast}\}\norm{(f,g)}_{L,[\phi,\psi]}
\end{align}
for all $(f,g)\in\mathcal{H}^1_L$. In particular, the norms $\norm{\cdot}_{L}$ and $\norm{\cdot}_{L,[\phi,\psi]}$ are equivalent on $\mathcal{V}^1_{L}$.

Next, we define an inner product and its induced norm on $\mathcal{V}^{-1}_{L}$ by
\begin{align*}
    \big((f,g),(\zeta,\xi)\big)_{L,[\phi,\psi],\ast} &\coloneqq \big(\mathcal{S}_{L}[\phi,\psi](f,g),\mathcal{S}_{L}[\phi,\psi](\zeta,\xi)\big)_{L,[\phi,\psi]}, \\
    \norm{(f,g)}_{L,[\phi,\psi],\ast} &\coloneqq \big((f,g),(f,g)\big)_{L,[\phi,\psi],\ast}^{\frac12}
\end{align*}
for all $(f,g), (\zeta,\xi)\in\mathcal{V}^{-1}_{L}$. Using the respective weak formulation satisfied by the solution operators $\mathcal{S}_{L}$ and $\mathcal{S}_{L}[\phi,\psi]$, one can readily check that the norms $\norm{\cdot}_{L,[\phi,\psi],\ast}$ and $\norm{\cdot}_{L,\ast}$ are equivalent on $\mathcal{V}^{-1}_{L}$ with
\begin{align}\label{NormEquivalence}
    \min\{1,\sqrt{m^\ast}\}\norm{(f,g)}_{L,[\phi,\psi],\ast} \leq \norm{(f,g)}_{L,\ast} \leq \max\{1,\sqrt{M^\ast}\}\norm{(f,g)}_{L,[\phi,\psi],\ast}
\end{align}
for all $(f,g)\in\mathcal{V}^{-1}_{L}$. In particular, we also obtain that $\norm{\cdot}_{L,[\phi,\psi],\ast}$ and $\norm{\cdot}_{(\mathcal{H}^1_L)^\prime}$ are equivalent on $\mathcal{V}^{-1}_{L}$.

Besides, we have
\begin{align}\label{Est:fg:L^2:SolOp:1}
    \begin{split}
        \norm{(f,g)}_{\mathcal{L}^2} &= \sqrt{\big(\mathcal{S}_{L}[\phi,\psi](f,g),(f,g)\big)_{L,[\phi,\psi]}} \\
        &\leq \max\{1,\sqrt{M^\ast}\}\norm{\mathcal{S}_{L}[\phi,\psi](f,g)}_{L}^{\frac12}\norm{(f,g)}_{L}^{\frac12}
    \end{split}
\end{align}
for all $(f,g)\in\mathcal{V}^{-1}_{L}\cap\mathcal{H}^1_L$.

Our next result establishes higher regularity results for $\mathcal{S}_{L}[\phi,\psi](f,g)$.

\begin{proposition}\label{Proposition:BSE:HighReg:2}
	Let $\Omega$ be a domain of class $C^2$, let $(\phi,\psi)\in\mathcal{W}^{1,\infty}$, $m_\Om,m_\Ga\in C^1([-1,1])$, and consider a pair $(f,g)\in\mathcal{V}^{-1}_{L}\cap\mathcal{L}^2$. Then $\mathcal{S}_{L}[\phi,\psi](f,g)\in\mathcal{H}^2$. Additionally, there exists a constant $C > 0$ such that
	\begin{align}\label{Est:Sol:G:H^2:thm}
        \begin{split}
    		&\norm{\mathcal{S}_{L}[\phi,\psi](f,g)}_{\mathcal{H}^2} \\
            &\quad\leq C\big(\norm{(f,g)}_{\mathcal{L}^2} + \norm{(\Grad\phi\cdot\Grad\mathcal{S}_{L}^\Om[\phi,\psi](f,g),\Gradg\psi\cdot\Gradg\mathcal{S}_{L}^\Ga[\phi,\psi](f,g))}_{\mathcal{L}^2}\big).
        \end{split}
	\end{align}

\end{proposition}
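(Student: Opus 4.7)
The plan is to rewrite \eqref{SYSTEM:EBS:MOB} in pointwise form by dividing through by the mobilities, and then to apply the standard $H^2$-regularity theory for the bulk Laplacian and the Laplace--Beltrami operator. Since $m_\Om,m_\Ga\in C^1([-1,1])$ are uniformly bounded away from zero and $(\phi,\psi)\in\mathcal{W}^{1,\infty}$, the chain rule gives $m_\Om(\phi)\in W^{1,\infty}(\Om)$ and $m_\Ga(\psi)\in W^{1,\infty}(\Ga)$, and analogously for their reciprocals. Expanding the divergence distributionally and using $f\in L^2(\Om)$ together with $\Grad u\in L^2(\Om)$ from Theorem~\ref{Theorem:EBS:WS}, I would derive
\begin{align*}
-\Lap u &= \frac{f+m_\Om'(\phi)\Grad\phi\cdot\Grad u}{m_\Om(\phi)}\in L^2(\Om), \\
-\Lapg v &= \frac{g+m_\Ga'(\psi)\Gradg\psi\cdot\Gradg v-\beta m_\Om(\phi)\deln u}{m_\Ga(\psi)},
\end{align*}
where the $L^2(\Om)$-bound on the first line produces exactly the term $\Grad\phi\cdot\Grad u$ appearing on the right-hand side of \eqref{Est:Sol:G:H^2:thm}.

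Next, I would control $m_\Om(\phi)\deln u$ in $L^2(\Ga)$ using the coupling boundary condition: for $L\in(0,\infty)$ it equals $(\beta v-u)/L\in H^{1/2}(\Ga)$ directly from the $\mathcal{H}^1$-regularity of $(u,v)$ and the trace theorem; for $L=\infty$ it vanishes; and for $L=0$ the identification $u|_\Ga=\beta v$ is built into the function space, so the trace $m_\Om(\phi)\deln u$ can be recovered through the generalized Green identity using the already established $\Lap u\in L^2(\Om)$. Combining this control with the a priori estimate \eqref{Est:EBS:Apriori} to absorb the $\mathcal{H}^1$-norm of $(u,v)$ into $\|(f,g)\|_{\mathcal{L}^2}$ yields the required $L^2(\Ga)$-bound on the surface right-hand side, with the term $\Gradg\psi\cdot\Gradg v$ appearing as in \eqref{Est:Sol:G:H^2:thm}. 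Standard $H^2$-regularity of the Laplace--Beltrami operator on the closed $C^2$-manifold $\Ga$ then delivers $v\in H^2(\Ga)$ together with the corresponding quantitative bound.

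With $v\in H^2(\Ga)\hookrightarrow H^{3/2}(\Ga)$, the boundary datum for the bulk equation is upgraded: an $H^{1/2}(\Ga)$ datum in the Robin case $L\in(0,\infty)$, an $H^{3/2}(\Ga)$ Dirichlet datum in the case $L=0$, and a homogeneous Neumann condition when $L=\infty$. Classical $H^2$-regularity for $-\Div(a\Grad u)=\tilde{f}$ with $a=m_\Om(\phi)\in W^{1,\infty}(\Om)$ bounded below in a $C^2$-domain then produces $u\in H^2(\Om)$ with the companion quantitative estimate; assembling both estimates and absorbing lower-order norms via \eqref{Est:EBS:Apriori} yields \eqref{Est:Sol:G:H^2:thm}.

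The principal obstacle is the bulk--surface coupling through the boundary condition, which threatens a circular dependence between $\deln u\in L^2(\Ga)$ (needed to bound the surface right-hand side) and $v\in H^2(\Ga)$ (needed to upgrade the bulk boundary datum). For $L\in(0,\infty]$ this is broken cleanly by the explicit algebraic form of the coupling together with \eqref{Est:EBS:Apriori}, whereas for $L=0$ the argument is more delicate and may require either the Green-identity reduction mentioned above or an approximation $L_n\searrow 0$ followed by passage to the limit in the $H^2$-estimates. A secondary technical point is the rigorous distributional justification of the pointwise rewriting when the mobilities are only $C^1$, which can be handled by a standard approximation by smooth mobilities and passage to the limit.
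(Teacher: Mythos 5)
Your treatment of the cases $L\in(0,\infty)$ and $L=\infty$ coincides with the paper's: divide by the mobility (legitimate, as you note, since $m_\Om(\phi)\in W^{1,\infty}(\Om)$ and $m_\Ga(\psi)\in W^{1,\infty}(\Ga)$), read off the Robin datum $\deln u = \chi(L)(\beta v - u)/m_\Om(\phi)\in H^{1/2}(\Ga)$ respectively the homogeneous Neumann condition, do the surface regularity first and the bulk regularity second. Your closing worry about justifying the pointwise rewriting for merely $C^1$ mobilities is unnecessary: the paper does it directly by inserting $\zeta = \bar\zeta/m_\Om(\phi)$ with $\bar\zeta\in C_c^\infty(\Om)$ as a test function, which is admissible precisely because of the $W^{1,\infty}$ regularity, so no approximation by smooth mobilities is needed.

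The genuine gap is in the case $L=0$, which you correctly flag as the delicate point but do not resolve. Your first proposed fix --- recovering $m_\Om(\phi)\deln u$ ``through the generalized Green identity using $\Lap u\in L^2(\Om)$'' --- does not deliver what is needed: for $u\in H^1(\Om)$ with $\Lap u\in L^2(\Om)$, the Green identity defines $\deln u$ only as an element of $H^{-1/2}(\Ga)$, which is insufficient to place the surface right-hand side $\frac{1}{m_\Ga(\psi)}\big(g - \beta m_\Om(\phi)\deln u + \Gradg m_\Ga(\psi)\cdot\Gradg v\big)$ in $L^2(\Ga)$, the hypothesis required by the $H^2(\Ga)$-regularity theory for the Laplace--Beltrami problem. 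The paper breaks the circularity differently: since the Dirichlet datum $u\vert_\Ga = \beta v$ lies in $H^1(\Ga)$ --- half a derivative more than the bare trace space $H^{1/2}(\Ga)$ --- elliptic regularity for the Poisson--Dirichlet problem first yields the \emph{intermediate} regularity $u\in H^{3/2}(\Om)$ with the bound \eqref{Est:BSE:H^3/2}, and then a refined trace theorem for functions with square-integrable Laplacian (\cite[Theorem~2.27]{Brezzi1987} or \cite[Theorem~A.1]{Colli2019a}) gives $\deln u\in L^2(\Ga)$ with $\norm{\deln u}_{L^2(\Ga)}\leq C\norm{u}_{H^{3/2}(\Om)}$; only after this does the surface problem close and the Dirichlet datum upgrade to $H^2(\Ga)$, allowing the final application of Poisson--Dirichlet regularity to get $u\in H^2(\Om)$. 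Your second fallback, an approximation $L_n\searrow 0$, is also not straightforward: the Robin datum carries the factor $\chi(L_n) = 1/L_n$, so the constants in your Robin-case $H^2$-estimates degenerate as $L_n\to 0$, and a uniform bound would have to be re-derived --- essentially by the $H^{3/2}$-plus-trace argument just described. Aside from this missing step (and the minor technical point that, for $L=0$, admissible test pairs for the surface equation must be manufactured via the inverse trace theorem, as the paper does with $(\zeta,\xi) = (\beta\bar\xi,\xi)$), your outline matches the paper's proof.
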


\begin{proof}
	For the sake of brevity, we use again $(u,v)$ to denote the unique solution $\mathcal{S}_{L}[\phi,\psi](f,g)$ of \eqref{SYSTEM:EBS:MOB}. We start by showing that $(u,v)\in\mathcal{H}^2$. To this end, we make a case distinction according to the parameter $L\in[0,\infty]$.
	
	\textit{Case $L = 0$.} Choosing $\xi = 0$ in the weak formulation \eqref{WF:EBS}, we obtain
	\begin{align}\label{L=0:Test}
		\intO m_\Om(\phi)\Grad u\cdot\Grad\zeta\dx = \intO f\zeta\dx
	\end{align}
	for all $\zeta\in H^1_0(\Om)$. Noting on $\phi\in W^{1,\infty}(\Om)$, we can use $\zeta = \frac{\bar\zeta}{m_\Om(\phi)}\in H^1_0(\Om)$ as a test function in \eqref{L=0:Test} for some $\bar\zeta\in C_c^\infty(\Om)$. Consequently, it holds that
    \begin{align*}
        \intO \Grad u\cdot\Grad\bar\zeta\dx = \intO \frac{1}{m_\Om(\phi)}\Big(f + \Grad m_\Om(\phi)\cdot\Grad u\Big) \bar\zeta\dx
    \end{align*}
    for all $\bar\zeta\in C_c^\infty(\Om)$. In particular, as $\bar\zeta$ was arbitrary, this implies that the distributional derivative $\Lap u$ belongs to $L^2(\Om)$ and satisfies
	\begin{align*}
		-\Lap u = \frac{1}{m_\Om(\phi)}\Big(f + \Grad m_\Om(\phi)\cdot\Grad u\Big) \qquad\text{a.e. in~}\Om.
	\end{align*}
	As we further know that $u\vert_\Ga = \beta v \in H^1(\Ga)$, we may apply elliptic regularity theory for the Poisson--Dirichlet problem (see, e.g., \cite[Theorem~3.2]{Brezzi1987} or \cite[Theorem~A.2]{Colli2019a}) to conclude that $u\in H^{\frac32}(\Om)$ with
	\begin{align}\label{Est:BSE:H^3/2}
		\norm{u}_{H^{\frac32}(\Om)} \leq C\big(\norm{f}_{L^2(\Om)} + \norm{\Grad\phi\cdot\Grad u}_{L^2(\Om)} + \norm{v}_{H^1(\Ga)}\big).
	\end{align}
	Now, since $\Lap u\in L^2(\Om)$ and $u\in H^{\frac32}(\Om)$, we can use a variant of the elliptic trace theorem (see, e.g., \cite[Theorem~2.27]{Brezzi1987} or \cite[Theorem~A.1]{Colli2019a}) to deduce that $\deln u\in L^2(\Ga)$ with
	\begin{align}\label{Est:BSE:deln}
		\norm{\deln u}_{L^2(\Ga)} \leq C\norm{u}_{H^{\frac32}(\Om)}.
	\end{align}
	Consequently, we find that
	\begin{align}\label{Id:BSE:deln}
		\intO m_\Om(\phi)\Grad u\cdot\Grad\zeta\dx = \intO f\zeta\dx + \intG m_\Om(\phi)\deln u\zeta\dG \qquad\text{for all~}\zeta\in H^1(\Om).
	\end{align}
	Consider now an arbitrary function $\xi\in H^1(\Ga)$. According to the inverse trace theorem (see, e.g., \cite[Theorem~4.2.3]{Hsiao2008}), there exists a function $\bar\xi\in H^{\frac32}(\Om)$ such that $\bar\xi\vert_\Ga = \xi$ a.e. on $\Ga$. Choosing $\zeta = \beta\bar\xi$, we see that $(\zeta,\xi)\in\mathcal{H}^1_L$ is an admissible test function in \eqref{WF:EBS}. Using the identity \eqref{Id:BSE:deln}, we obtain
	\begin{align*}
		\intG m_\Ga(\psi)\Gradg v \cdot\Grad\xi\dG = \intG (g - \beta m_\Om(\phi)\deln u)\xi\dG.
	\end{align*}
	As $\xi\in H^1(\Ga)$ was arbitrarily chosen, we infer similarly to above that $v$ is a weak solution of the surface elliptic equation
	\begin{align*}
		-\Lapg v = \frac{1}{m_\Ga(\psi)}\Big( g - \beta m_\Om(\phi)\deln u + \Gradg m_\Ga(\psi)\cdot\Gradg v\Big) \qquad\text{on~}\Ga.
	\end{align*}
    Since $\psi\in W^{1,\infty}(\Ga)$ and $m_\Ga^\prime$ is bounded, we infer form the Sobolev inequality that $\Gradg m_\Ga(\psi)\in L^\infty(\Ga)$, which, in combination with $v\in H^1(\Ga)$ implies that $\Gradg m_\Ga(\psi)\cdot\Gradg v\in L^2(\Ga)$. Thus, since $m_\Om$ is bounded, and $\deln u\in L^2(\Ga)$ according to \eqref{Est:BSE:deln}, we find that
    \begin{align*}
        -\Lapg v = \tilde{g}\in L^2(\Ga).
    \end{align*}
	Recalling that $\Ga$ is a compact submanifold of class $C^2$ without boundary, we can apply regularity theory for elliptic equations on submanifolds (see, e.g., \cite[s.5, Theorem~1.3]{Taylor}) to infer $v\in H^2(\Ga)$, together with the estimate
	\begin{align}\label{Est:BSE:H^2:v:0}
		\begin{split}
			\norm{v}_{H^2(\Ga)} &\leq C\big(\norm{g}_{L^2(\Ga)} + 						\norm{\deln u}_{L^2(\Ga)} + \norm{\Gradg m_\Ga(\psi)						\cdot\Gradg v}_{L^2(\Ga)}\big) \\
			&\leq C\big(\norm{(f,g)}_{\mathcal{L}^2} + \norm{\Gradg 					\psi\cdot\Gradg v}_{L^2(\Ga)}\big).
		\end{split}
	\end{align}
	Here, we have additionally used \eqref{Est:EBS:Apriori}, \eqref{Est:BSE:H^3/2}, and \eqref{Est:BSE:deln} in the last inequality. Since $u\vert_\Ga = \beta v$ a.e. on $\Ga$, we further deduce that $u\vert_\Ga\in H^2(\Ga)$. Recalling that $-\Lap u\in L^2(\Om)$, we eventually conclude from elliptic regularity theory for the Poisson--Dirichlet problem (see, e.g., \cite[Theorem 3.2]{Brezzi1987} or \cite[Theorem A.2]{Colli2019a}) that $u\in H^2(\Om)$ with
	\begin{align}\label{Est:BSE:H^2:u:0}
		\begin{split}
			\norm{u}_{H^2(\Om)} &\leq C\big(\norm{f}_{L^2(\Om)} + \norm{\Grad m_\Om(\phi)\cdot\Grad u}_{L^2(\Om)} + \norm{v}_{H^2(\Ga)}\big) \\
			&\leq C\big(\norm{(f,g)}_{\mathcal{L}^2} + \norm{(\Grad\phi\cdot\Grad u,\Gradg\psi\cdot\Gradg v)}_{\mathcal{L}^2}\big).
		\end{split}
	\end{align}
	Finally, combining \eqref{Est:BSE:H^2:v:0}-\eqref{Est:BSE:H^2:u:0} leads to
	\begin{align}\label{Est:BSE:H^2:0}
		\norm{(u,v)}_{\mathcal{H}^2} \leq C\big(\norm{(f,g)}_{\mathcal{L}^2} + \norm{(\Grad\phi\cdot\Grad u,\Gradg\psi\cdot\Gradg v)}_{\mathcal{L}^2}\big).
	\end{align}	
	
	\textit{Case $L\in(0,\infty)$.} Here, we fix $\zeta = 0$. Then, the weak formulation \eqref{WF:EBS} reduces to
	\begin{align*}
		\intG m_\Ga(\psi)\Gradg v\cdot\Grad\xi\dG + \chi(L)\intG (\beta v - u)\beta\xi\dG = \intG g\xi\dG
	\end{align*}
	for all $\xi\in H^1(\Ga)$. This means that $v$ is a weak solution of the surface elliptic problem
	\begin{align*}
		-\Lapg v = \frac{1}{m_\Ga(\psi)}\Big(g - \beta\chi(L)(\beta v - u) + \Gradg m_\Ga(\psi)\cdot\Gradg v\Big) \qquad\text{on~}\Ga.
	\end{align*}
    Here, we can argue similarly to the case $L = 0$. Indeed, since $u\in H^1(\Om)$, we have by the trace theorem $\beta v - u\vert_\Ga\in H^{\frac12}(\Ga)$, and thus, in particular, $\beta v - u\vert_\Ga\in L^2(\Ga)$. Therefore, we deduce with elliptic regularity theory on submanifolds that $v\in H^2(\Ga)$ together with the estimate
	\begin{align}\label{Est:BSE:H^2:v:L}
		\begin{split}
			\norm{v}_{H^2(\Ga)} &\leq C\big(\norm{g}_{L^2(\Ga)} + 						\norm{v}_{L^2(\Ga)} + \norm{u}_{L^2(\Ga)} + \norm{\Gradg 					m_\Ga(\psi)\cdot\Gradg v}_{L^2(\Ga)}\big) \\
			&\leq C\big(\norm{(f,g)}_{\mathcal{L}^2} + 									\norm{\Gradg\psi\cdot\Gradg v}_{L^2(\Ga)}\big)
		\end{split}
	\end{align}
	for some constant $C > 0$. Next, we choose $\xi = 0$ in the weak formulation \eqref{WF:EBS}, which yields
	\begin{align*}
		\intO m_\Om(\phi)\Grad u\cdot\Grad\zeta \dx + \chi(L)\intG (\beta v - u)\zeta\dG = \intO f\zeta\dx
	\end{align*}
	for all $\zeta\in H^1(\Om)$. This means that $u$ is a weak solution to the Poisson--Neumann problem
	\begin{alignat*}{2}
		-\Lap u &= \frac{1}{m_\Om(\phi)}\Big( f + \Grad m_\Om(\phi)\cdot\Grad u\Big) &&\qquad\text{in~}\Om, \\
		\deln u &= \frac{1}{m_\Om(\phi)} \chi(L)(\beta v - u) &&\qquad\text{on~}\Ga.
	\end{alignat*}
	Applying elliptic regularity theory for Poisson’s equation with inhomogeneous Neumann boundary condition (see, e.g., \cite[s.5, Proposition~7.7]{Taylor}), we deduce $u\in H^2(\Om)$ together with the existence of a constant $C > 0$ such that
	\begin{align}\label{Est:BSE:H^2:u:L}
		\begin{split}
			\norm{u}_{H^2(\Om)} &\leq C\big(\norm{f}_{L^2(\Om)} + 						\norm{v}_{H^{\frac12}(\Ga)} + \norm{u}_{H^{\frac12}(\Ga)} + \norm{u}				_{L^2(\Om)} + \norm{\Grad m_\Om(\phi)\cdot\Grad u}_{L^2(\Om)}				\big) \\
			&\leq C\big(\norm{(f,g)}_{\mathcal{L}^2} + 									\norm{\Grad\phi\cdot\Grad u}_{L^2(\Om)}\big).
		\end{split}
	\end{align}
	Combining estimates \eqref{Est:BSE:H^2:v:L} and \eqref{Est:BSE:H^2:u:L} yields
	\begin{align}\label{Est:BSE:H^2:L}
		\norm{(u,v)}_{\mathcal{H}^2} \leq C\big(\norm{(f,g)}_{\mathcal{L}^2} + \norm{(\Grad \phi\cdot\Grad u,\Gradg\psi\cdot\Gradg v)}_{\mathcal{L}^2}\big).
	\end{align}
	
	\textit{Case $L = \infty$.} In this case, the system \eqref{SYSTEM:EBS:MOB} decouples to the non-homogeneous Poisson problem with a homogeneous Neumann boundary condition
	\begin{alignat}{2}
		-\Div(m_\Om(\phi)\Grad u) &= f  &&\qquad\text{in~}\Om, \label{Dirichlet-Problem:Neumann}\\
		m_\Om(\phi)\deln u &= 0 &&\qquad\text{on~}\Ga, \label{Dirichlet-Problem:NeumannBC}
	\end{alignat}
	and the non-homogeneous surface elliptic problem
	\begin{align}\label{LaplaceBeltrami:2}
		-\Divg(m_\Ga(\psi)\Gradg v) = g \qquad\text{on~}\Ga.
	\end{align}
	Applying elliptic regularity theory for Poisson's problem with homogeneous Neumman boundary condition \eqref{Dirichlet-Problem:Neumann}-\eqref{Dirichlet-Problem:NeumannBC} and regularity theory for the Laplace--Beltrami equation \eqref{LaplaceBeltrami:2}, respectively, we directly conclude the desired regularity $u\in H^2(\Om)$ and $v\in H^2(\Ga)$ along with the estimates
	\begin{align*}
		\norm{u}_{H^2(\Om)} &\leq C\big(\norm{f}_{L^2(\Om)} + \norm{\Grad\phi\cdot\Grad u}_{L^2(\Om)}\big), \\
		\norm{v}_{H^2(\Ga)} &\leq C\big(\norm{g}_{L^2(\Ga)} + \norm{\Gradg\psi\cdot\Gradg v}_{L^2(\Ga)}\big). \qedhere
	\end{align*}
\end{proof}

\begin{remark}
    Under the assumptions of Proposition~\ref{Proposition:BSE:HighReg:2}, the estimate \eqref{Est:fg:L^2:SolOp:1} even holds for all $(f,g)\in\mathcal{V}^{-1}_{L}\cap\mathcal{H}^1$, i.e., the functions $f$ and $g$ do not need to satisfy the trace relation $f = \beta g$ on $\Ga$ if $L = 0$. This will be essential 
    in the proof of Theorem~\ref{Theorem}. Moreover, since $\mean{f}{g} = 0$, the bulk-surface Poincar\'{e} inequality yields, for $K\in(0,\infty)$,
    \begin{align}\label{Est:fg:L^2:SolOp:2}
        \norm{(f,g)}_{\mathcal{L}^2} \leq \max\{1,\sqrt{M^\ast}\}C_PC\norm{\mathcal{S}_{L}[\phi,\psi](f,g)}_{L}^{\frac12}\norm{(f,g)}_{K}^{\frac12}.
    \end{align}
    In the case $K = \infty$, the bulk-surface Poincar\'{e} inequality is in general not available. Instead, we use
    \begin{align*}
        \norm{(f,g)}_{L}^{\frac12} \leq C\norm{(f,g)}_{\mathcal{H}^1}^{\frac12} &\leq C\big(\norm{(f,g)}_{\mathcal{L}^2}^{\frac12} + \norm{(\Grad f, \Gradg g)}_{\mathcal{L}^2}^{\frac12}\big) \\
        &= C\big(\norm{(f,g)}_{\mathcal{L}^2}^{\frac12} + \norm{(f,g)}_{K}^{\frac12}\big),
    \end{align*}
    which leads to
    \begin{align*}
        \norm{(f,g)}_{\mathcal{L}^2} &\leq \max\{1,\sqrt{M^\ast}\}C_PC\norm{\mathcal{S}_{L}[\phi,\psi](f,g)}_{L}^{\frac12}\big(\norm{(f,g)}_{\mathcal{L}^2}^{\frac12} + \norm{(f,g)}_{K}^{\frac12}\big) \\
        &\leq \frac12\norm{(f,g)}_{\mathcal{L}^2} + \max\{1,\sqrt{M^\ast}\}C_PC\norm{\mathcal{S}_{L}[\phi,\psi](f,g)}_{L} \\
        &\quad + \max\{1,\sqrt{M^\ast}\}C_PC\norm{\mathcal{S}_{L}[\phi,\psi](f,g)}_{L}^{\frac12}\norm{(f,g)}_{K}^{\frac12}.
    \end{align*}
    Consequently, we obtain
    \begin{align}\label{Est:fg:L^2:SolOp:2:K=infty}
        \begin{split}
            \norm{(f,g)}_{\mathcal{L}^2} &\leq \max\{1,\sqrt{M^\ast}\}C_PC\norm{\mathcal{S}_{L}[\phi,\psi](f,g)}_{L} \\
            &\quad + \max\{1,\sqrt{M^\ast}\}C_PC\norm{\mathcal{S}_{L}[\phi,\psi](f,g)}_{L}^{\frac12}\norm{(f,g)}_{K}^{\frac12}.
        \end{split}
    \end{align}
\end{remark}

Similarly to the proof of Proposition~\ref{Proposition:BSE:HighReg:2}, we can prove $\mathcal{H}^3$-regularity of the unique solution to \eqref{SYSTEM:EBS:MOB} provided that the mobility functions are more regular.

\begin{corollary}\label{Corollary:BSE:H3}
    Let $\Om$ be of class $C^3$, let $(\phi,\psi)\in\mathcal{W}^{2,4}$, and assume that $m_\Om,m_\Ga\in C^2([-1,1])$ and $(f,g)\in\mathcal{V}^{-1}_{L}\cap\mathcal{H}^1$. Then $\mathcal{S}_L[\phi,\psi](f,g)\in\mathcal{H}^3$, and there exists a constant $C > 0$ such that
    \begin{align}\label{Est:Sol:G:H^3}
        &\norm{\mathcal{S}_{L}[\phi,\psi](f,g)}_{\mathcal{H}^3} \nonumber \\
        &\quad\leq C\big(1 + \mathbf{1}_{\{0\}}(L)\norm{(\phi,\psi)}_{\mathcal{H}^2}\big) \nonumber \\
        &\qquad\times\Bigg( \Bignorm{\bigg(\frac{f}{m_\Om(\phi)},\frac{g}{m_\Ga(\psi)}\bigg)}_{\mathcal{H}^1} \\
        &\qquad\qquad + \Bignorm{\bigg(\frac{m_\Om^\prime(\phi)\Grad\phi\cdot\Grad\mathcal{S}_{L}^\Om[\phi,\psi](f,g)}{m_\Om(\phi)},\frac{m_\Ga^\prime(\psi)\Gradg\psi\cdot\Gradg\mathcal{S}_{L}^\Ga[\phi,\psi](f,g)}{m_\Ga(\psi)}\bigg)}_{\mathcal{H}^1}\Bigg), \nonumber
    \end{align}
    where $\mathbf{1}_{\{0\}}(\cdot)$ denotes the indicator function of the set $\{0\}$.
\end{corollary}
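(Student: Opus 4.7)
The plan is to bootstrap from the $\mathcal{H}^2$-regularity guaranteed by Proposition~\ref{Proposition:BSE:HighReg:2} to $\mathcal{H}^3$-regularity by applying elliptic regularity theory at one level higher. Setting $(u,v)\coloneqq\mathcal{S}_{L}[\phi,\psi](f,g)$, I note that the Sobolev embedding $\mathcal{W}^{2,4}\hookrightarrow\mathcal{W}^{1,\infty}$ in dimensions $d\in\{2,3\}$, together with $(f,g)\in\mathcal{H}^1\hookrightarrow\mathcal{L}^2$, allows me to invoke Proposition~\ref{Proposition:BSE:HighReg:2}, which yields $(u,v)\in\mathcal{H}^2$ and the estimate \eqref{Est:Sol:G:H^2:thm}. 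Rewriting the bulk equation in non-divergence form,
\begin{align*}
    -\Lap u = \frac{f}{m_\Om(\phi)} + \frac{m_\Om^\prime(\phi)\Grad\phi\cdot\Grad u}{m_\Om(\phi)} \qquad\text{a.e.~in~}\Om,
\end{align*}
and proceeding analogously for the surface equation, the hypotheses of the corollary precisely ensure that both right-hand sides lie in $H^1$ of the respective domain. Moreover, since $m_\Om,m_\Ga\in C^2([-1,1])$ and $(\phi,\psi)\in\mathcal{W}^{2,4}$, the coefficients $1/m_\Om(\phi)$ and $1/m_\Ga(\psi)$ themselves belong to $W^{2,4}$, which is used to preserve regularity when multiplying boundary data.

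Next, I would distinguish the three regimes for $L$. For $L=\infty$, the system decouples and standard elliptic regularity theory for the Poisson--Neumann problem in $\Om$ (see, e.g., \cite[s.5, Proposition~7.7]{Taylor}) and for the Laplace--Beltrami operator on the compact submanifold $\Ga$ (see, e.g., \cite[s.5, Theorem~1.3]{Taylor}) directly yields $u\in H^3(\Om)$ and $v\in H^3(\Ga)$. For $L\in(0,\infty)$, the boundary condition $\deln u = (Lm_\Om(\phi))^{-1}(\beta v - u)$ can be viewed as a Neumann datum of class $H^{3/2}(\Ga)$, since $(u,v)\in\mathcal{H}^2$ and $1/(Lm_\Om(\phi))\in W^{2,4}(\Ga)$. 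Poisson--Neumann regularity then raises $u$ to $H^3(\Om)$, whence $\deln u\in H^{3/2}(\Ga)\hookrightarrow H^1(\Ga)$; plugging this into the non-divergence form of the surface equation yields a right-hand side in $H^1(\Ga)$, so that Laplace--Beltrami regularity gives $v\in H^3(\Ga)$.

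The most delicate case is $L=0$, where the coupling $u\vert_\Ga = \beta v$ forces a two-stage bootstrap. I would first use $v\in H^2(\Ga)$ as Dirichlet datum for $u$, combined with the $H^1(\Om)$ right-hand side already established, to upgrade $u$ to $H^{5/2}(\Om)$ via Dirichlet regularity; this in turn gives $\deln u\in H^1(\Ga)$. Inserting this into the non-divergence form of the surface equation, the right-hand side now lies in $H^1(\Ga)$, and Laplace--Beltrami regularity produces $v\in H^3(\Ga)$. In a final step, the Dirichlet data $u\vert_\Ga = \beta v\in H^{5/2}(\Ga)$ together with $-\Lap u\in H^1(\Om)$ delivers $u\in H^3(\Om)$. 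I expect the main obstacle to be this two-stage bootstrap, in particular the careful control, via product- and chain-rule estimates in intermediate half-integer spaces, of the nonlinear commutator-type terms $\Grad m_\Om(\phi)\cdot\Grad u$ and $\Gradg m_\Ga(\psi)\cdot\Gradg v$; it is precisely there that second derivatives of $\phi$ and $\psi$ enter, producing the extra factor $\|(\phi,\psi)\|_{\mathcal{H}^2}$ appearing in \eqref{Est:Sol:G:H^3} when $L=0$. In the other regimes, no such factor arises because the Dirichlet trace is not involved in the bootstrap.
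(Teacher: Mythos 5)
Your proposal is correct and follows essentially the same route as the paper: bootstrap from Proposition~\ref{Proposition:BSE:HighReg:2} one derivative higher using the non-divergence forms of the equations, distinguish the cases $L=0$, $L\in(0,\infty)$, $L=\infty$, and for $L=0$ run the two-stage argument through $u\in H^{5/2}(\Om)$ to control $\deln u$ in $H^1(\Ga)$, which is precisely where the paper's factor $\mathbf{1}_{\{0\}}(L)\norm{(\phi,\psi)}_{\mathcal{H}^2}$ enters via the products $\deln u\,\Gradg\phi$ and $\deln u\,\Gradg\psi$. The only (cosmetic) difference is organizational: the paper first records a unified three-term $\mathcal{H}^3$-estimate including $\norm{m_\Om(\phi)\deln u/m_\Ga(\psi)}_{H^1(\Ga)}$ and then absorbs this coupling term case by case, whereas you perform the case distinction directly.
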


\begin{proof}
    Abbreviating again $(u,v) = \mathcal{S}_{L}[\phi,\psi](f,g)$, we can move along the lines from the proof of Proposition~\ref{Proposition:BSE:HighReg:2} and show that $(u,v)\in\mathcal{H}^3$ in combination with the estimate
    \begin{align*}
        &\norm{(u,v)}_{\mathcal{H}^3} \\
        &\quad\leq C\Bigg( \Bignorm{\bigg(\frac{f}{m_\Om(\phi)},\frac{g}{m_\Ga(\psi)}\bigg)}_{\mathcal{H}^1} + \Bignorm{\bigg(\frac{m_\Om^\prime(\phi)\Grad\phi\cdot\Grad u}{m_\Om(\phi)},\frac{m_\Ga^\prime(\psi)\Gradg\psi\cdot\Gradg v}{m_\Ga(\psi)}\bigg)}_{\mathcal{H}^1} \\
        &\qquad + \Bignorm{\frac{m_\Om(\phi)\deln u}{m_\Ga(\psi)}}_{H^1(\Ga)}\Bigg).
    \end{align*}
    In the following, we will show that the last summand on the right-hand side can be bounded in terms of the first two. To do so, we only have to take a closer look at the case $L = 0$. Indeed, if $L = \infty$, this term clearly vanishes, and for $L\in(0,\infty)$, we make use of the boundary condition $Lm_\Om(\phi)\deln u = \beta v - u$ a.e.~on $\Ga$ to find that
    \begin{align*}
        \Bignorm{\frac{m_\Om(\phi)\deln u}{m_\Ga(\psi)}}_{H^1(\Ga)} = \frac{1}{L}\Bignorm{\frac{\beta v - u}{m_\Ga(\psi)}}_{H^1(\Ga)} &\leq C \norm{(u,v)}_{\mathcal{H}^2} \\
        &\leq C\big(\norm{(f,g)}_{\mathcal{L}^2} + \norm{(\Grad\phi\cdot\Grad u,\Gradg\psi\cdot\Gradg v)}_{\mathcal{L}^2}\big)
    \end{align*}
    in view of \eqref{Est:Sol:G:H^2:thm}. Lastly, if $L = 0$, it holds that
    \begin{align*}
        \Gradg\bigg(\frac{m_\Om(\phi)\deln u}{m_\Ga(\psi)}\bigg) = \frac{m_\Om^\prime(\phi)\deln u\Gradg\phi}{m_\Ga(\psi)^2} + \frac{m_\Om(\phi)\Gradg\deln u}{m_\Ga(\psi)^2} - \frac{m_\Om(\phi)m_\Ga^\prime(\psi)\deln u\Gradg\psi}{m_\Ga(\psi)^2} \quad\text{a.e.~on~}\Ga.
    \end{align*}
    For the first term, we use the trace theorem together with the Sobolev inequality and find
    \begin{align*}
        \norm{\deln u\Gradg\phi}_{L^2(\Ga)} \leq \norm{\deln u}_{L^4(\Ga)}\norm{\Gradg\phi}_{L^4(\Ga)} \leq C\norm{\phi}_{H^2(\Om)}\norm{u}_{H^2(\Om)}.
    \end{align*}
    Next, for the second term, we argue as in the beginning of the proof of Theorem~\ref{Proposition:BSE:HighReg:2}, in particular as for \eqref{Est:BSE:H^3/2} and \eqref{Est:BSE:deln}. To be precise, we obtain
    \begin{align*}
        \norm{\Gradg\deln u}_{L^2(\Ga)} \leq \norm{\deln u}_{H^1(\Ga)} &\leq C\norm{u}_{H^{\frac52}(\Om)} \\
        &\leq C\Big(\Bignorm{\frac{f}{m_\Om(\phi)}}_{H^1(\Om)} + \Bignorm{\frac{m_\Om^\prime(\phi)\Grad\phi\cdot\Grad u}{m_\Om(\phi)}}_{H^1(\Om)} + \norm{v}_{H^2(\Ga)}\Big).
    \end{align*}
    Finally, for the last term, we combine the trace theorem again with the Sobolev inequality to deduce that
    \begin{align*}
        \norm{\deln u\Gradg\psi}_{L^2(\Ga)} \leq \norm{\deln u}_{L^4(\Ga)}\norm{\Gradg\psi}_{L^4(\Ga)}\leq C\norm{\psi}_{H^2(\Ga)}\norm{u}_{H^2(\Om)}.
    \end{align*}
    Thus, from the estimates above and \eqref{Est:Sol:G:H^2:thm} we conclude \eqref{Est:Sol:G:H^3}, which finishes the proof.
\end{proof}

Next, we aim to generalize the result from Proposition~\ref{Proposition:BSE:HighReg:2} to the case of $ L^p$-regularity theory.

\begin{proposition}\label{Corollary:BSE:HighReg:p}
	Let $\Om$ be of class $C^2$, let $p\in[2,\infty)$ and consider a pair $(f,g)\in\mathcal{V}^{-1}_{L}\cap\mathcal{L}^p$. Further, assume that $(\phi,\psi)\in\mathcal{W}^{2,4}$ such that $(\Grad\phi\cdot\Grad\mathcal{S}_{L}^\Om[\phi,\psi](f,g),\Gradg\psi\cdot\Gradg\mathcal{S}_{L}^\Ga[\phi,\psi](f,g))\in\mathcal{L}^p$. Then there exists a constant $C > 0$ such that
	\begin{align}\label{Est:BSE:W2p}
        \begin{split}
    		&\norm{\mathcal{S}_{L}[\phi,\psi](f,g)}_{\mathcal{W}^{2,p}} \\
            &\quad\leq C\big(\norm{(f,g)}_{\mathcal{L}^p} + \norm{(\Grad\phi\cdot\Grad\mathcal{S}_{L}^\Om[\phi,\psi](f,g),\Gradg\psi\cdot\Gradg\mathcal{S}_{L}^\Ga[\phi,\psi](f,g))}_{\mathcal{L}^p}\big).
        \end{split}
	\end{align}
\end{proposition}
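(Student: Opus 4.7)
My plan is to mirror the three-case argument used in the proof of Proposition~\ref{Proposition:BSE:HighReg:2}, upgrading the $L^2$-elliptic regularity steps to their $L^p$-analogues. First I would note that the embedding $\mathcal{L}^p\emb\mathcal{L}^2$ (valid since $\Om$ and $\Ga$ are bounded and $p\geq 2$) guarantees $(f,g)\in\mathcal{V}^{-1}_{L}\cap\mathcal{L}^2$ and likewise $(\Grad\phi\cdot\Grad u,\Gradg\psi\cdot\Gradg v)\in\mathcal{L}^2$, where I abbreviate $(u,v)\coloneqq\mathcal{S}_{L}[\phi,\psi](f,g)$. Proposition~\ref{Proposition:BSE:HighReg:2} then yields $(u,v)\in\mathcal{H}^2$ as a baseline regularity with an attached quantitative bound. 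Moreover, $(\phi,\psi)\in\mathcal{W}^{2,4}$ combined with the Sobolev embedding $W^{1,4}\emb L^\infty$ (applicable in both $d=2$ and $d=3$ since $4>d$) provides $\Grad\phi\in L^\infty(\Om)$ and $\Gradg\psi\in L^\infty(\Ga)$, so $\Grad m_\Om(\phi)=m_\Om'(\phi)\Grad\phi$ and $\Gradg m_\Ga(\psi)=m_\Ga'(\psi)\Gradg\psi$ are essentially bounded.

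\textbf{Case analysis.} For $L=0$, repeating the derivation of the proof of Proposition~\ref{Proposition:BSE:HighReg:2}, $u$ satisfies the Poisson--Dirichlet problem with interior equation $-\Lap u = m_\Om(\phi)^{-1}(f+m_\Om'(\phi)\Grad\phi\cdot\Grad u)$ and Dirichlet datum $u\vert_\Ga=\beta v$, while $v$ satisfies the surface elliptic equation
\begin{align*}
-\Lapg v = \frac{1}{m_\Ga(\psi)}\big(g - \beta m_\Om(\phi)\deln u + m_\Ga'(\psi)\Gradg\psi\cdot\Gradg v\big)\quad\text{on~}\Ga.
\end{align*}
By hypothesis on $(f,g)$ and the cross-terms, the right-hand side of the bulk equation lies in $L^p(\Om)$. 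The only nonstandard term in the surface equation is $\deln u$: from the baseline $u\in H^2(\Om)$, the trace $\deln u\in H^{1/2}(\Ga)$ embeds into $L^p(\Ga)$ for any $p<\infty$ in $d=2$, and one reaches any $p<\infty$ in $d=3$ by iterating the argument with progressively better interior regularity. Applying $L^p$-regularity for the Laplace--Beltrami operator on the closed $C^2$-submanifold $\Ga$ then yields $v\in W^{2,p}(\Ga)$, so that $\beta v\in W^{2-1/p,p}(\Ga)$ constitutes an admissible Dirichlet datum and $L^p$-theory for the Poisson--Dirichlet problem produces $u\in W^{2,p}(\Om)$ with the desired bound. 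The cases $L\in(0,\infty)$ and $L=\infty$ proceed in complete analogy: for $L\in(0,\infty)$, $v$ solves a similar surface equation with the extra term $\chi(L)(\beta v-u)\in L^p(\Ga)$ (via the baseline $\mathcal{H}^2$-regularity and the trace theorem), and $u$ subsequently satisfies a Poisson--Neumann problem with boundary datum $\chi(L)(\beta v-u)/m_\Om(\phi)\in W^{1-1/p,p}(\Ga)$, so that $L^p$-theory for Poisson's equation with inhomogeneous Neumann condition gives $u\in W^{2,p}(\Om)$; for $L=\infty$ the system decouples into independent Poisson--Neumann and Laplace--Beltrami problems, and the corresponding $L^p$-theories apply directly.

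\textbf{Main obstacle.} The principal technical point is the trace-level bootstrap that produces $\deln u\in L^p(\Ga)$ (and, in the case $L\in(0,\infty)$, the analogous trace regularity of $\beta v-u$) so that the surface equation can be closed at the $L^p$-level. In $d=2$, where $\Ga$ is one-dimensional and $H^{1/2}(\Ga)\emb L^p(\Ga)$ for every $p<\infty$, a single-step argument suffices; in $d=3$ one must iterate, exploiting intermediate $W^{2,q}(\Om)$-regularity of $u$ with $q<p$ to upgrade $\deln u\in W^{1-1/q,q}(\Ga)$ into progressively higher Lebesgue spaces on $\Ga$ until $p$ is reached. Throughout, the constants must be tracked carefully so that only the quantities appearing on the right-hand side of \eqref{Est:BSE:W2p} show up in the final estimate.
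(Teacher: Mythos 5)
Your proposal is correct and follows essentially the same route as the paper's proof: the case distinction in $L$, the $\mathcal{H}^2$ baseline from Proposition~\ref{Proposition:BSE:HighReg:2}, the bulk Poisson problem with $L^p$ right-hand side, and the surface Laplace--Beltrami equation closed via the trace regularity of $\deln u$ (resp.\ of $\beta v - u$). The only deviation is your $d=3$ bootstrap: the paper instead closes the trace step in a single pass by noting that the $\mathcal{H}^2$ baseline already places the boundary datum in $W^{t,p}(\Ga)$ by Sobolev embedding, whence $u\in W^{s,p}(\Om)$ with $s = \min\{2,\tfrac52 + \tfrac dp - \tfrac d2\} \geq 1 + \tfrac2p$ and thus $\deln u\in W^{1/p,p}(\Ga)\emb L^p(\Ga)$ directly for every $p$ and both $d=2,3$ --- your finite iteration (which in fact terminates after two steps, since $W^{3/4,4}(\Ga)\emb L^\infty(\Ga)$) is a workable but less economical substitute.
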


\begin{proof}
	We adapt the proof of \cite[Proposition~A.1]{Knopf2025} for the case of constant mobility functions. As we already know from Proposition~\ref{Proposition:BSE:HighReg:2} that $(u,v) \coloneqq \mathcal{S}_{L}[\phi,\psi](f,g)\in\mathcal{H}^2$, it holds that
	\begin{alignat}{2}\label{Poisson-Problem}
		-\Lap u &= \frac{1}{m_\Om(\phi)}\Big(f + \Grad m_\Om(\phi)\cdot\Grad u\Big) &&\qquad\text{a.e. in~}\Om, \\
		L\deln u &= \frac{1}{m_\Om(\phi)}(\beta v - u) &&\qquad\text{a.e. on~}\Ga, \label{Poisson-Problem:BC}
	\end{alignat}
	as well as
	\begin{align}\label{LaplceBeltrami-Problem}
		-\Lapg v = \frac{1}{m_\Ga(\psi)}\Big(g - \beta m_\Om(\phi)\deln u + \Gradg m_\Ga(\psi)\cdot\Gradg v\Big) \qquad\text{a.e. on~}\Ga.
	\end{align}
	As in the proof of Proposition~\ref{Proposition:BSE:HighReg:2}, we consider the cases $L = 0$, $L\in(0,\infty)$ and $L = \infty$ separately.

	\textit{Case $L = 0$.} Since the boundary $\Ga$ is a $(d-1)$-dimensional submanifold of $\R^d$, Sobolev's embedding theorem implies that
	\begin{align*}
		u\vert_\Ga = \beta v \in W^{t,p}(\Ga) \qquad\text{with~} t = \frac52 + \frac{d-1}{p} - \frac d2.
	\end{align*}
	By assumption $f, \Grad\phi\cdot\Grad u \in L^p(\Om)$, we may apply elliptic regularity theory for Poisson's equation with an inhomogeneous Dirichlet boundary condition (see, e.g., \cite[Theorem 3.2]{Brezzi1987} or \cite[Theorem A.2]{Colli2019a}) to deduce
	\begin{align*}
		u\in W^{s,p}(\Om) \quad\text{with}\quad s = \min\Big\{2,\frac52 + \frac dp - \frac d2\Big\} \geq 1 + \frac 2p
	\end{align*}
	together with the estimate
	\begin{align*}
		\norm{u}_{W^{s,p}(\Om)} &\leq C\big(\norm{f}_{L^p(\Om)} + \norm{\Grad m_\Om(\phi)\cdot\Grad u}_{L^p(\Om)} + \norm{v}_{W^{t,p}(\Ga)}\big) \\
		&\leq C\big(\norm{f}_{L^p(\Om)} + \norm{\Grad\phi\cdot\Grad u}_{L^p(\Om)} + \norm{v}_{H^2(\Ga)}\big) \\
		&\leq C\big(\norm{(f,g)}_{\mathcal{L}^p} + \norm{(\Grad\phi\cdot\Grad u,\Gradg\psi\cdot\Gradg v)}_{\mathcal{L}^p}\big).
	\end{align*}
	This readily yields
	\begin{align*}
		\Grad u\in W^{s-1,p}(\Om) \emb W^{\frac2p,p}(\Om).
	\end{align*}
	Then, since $\frac2p - \frac1p = \frac1p$ is positive and not an integer, the trace theorem implies that
	\begin{align*}
		\deln u\in W^{\frac1p,p}(\Ga)\emb L^p(\Ga).
	\end{align*}
	Consequently, the assumptions $g, \Gradg\psi\cdot\Gradg v\in L^p(\Ga)$ allow us to apply regularity theory for the Laplace--Beltrami equation (see, e.g., \cite[Lemma B.1]{Vitillaro2017}) which yields that $v\in W^{2,p}(\Ga)$ and the existence of a constant $C > 0$ such that
	\begin{align}\label{Est:BSE:W2p:v:0}
		\begin{split}
			\norm{v}_{W^{2,p}(\Ga)} &\leq C\big(\norm{g}_{L^p(\Ga)} + 					\norm{\deln u}_{L^p(\Ga)} + \norm{\Gradg m_\Ga(\psi)						\cdot\Gradg v}_{L^p(\Ga)}\big) \\
			&\leq C\big(\norm{(f,g)}_{\mathcal{L}^p} + 									\norm{(\Grad\phi\cdot\Grad u,\Gradg\psi\cdot\Gradg v)}						_{\mathcal{L}^p}\big).
		\end{split}
	\end{align}
	This, in turn, entails
	\begin{align*}
		u\vert_\Ga = \beta v \in W^{2,p}(\Ga).
	\end{align*}
	Therefore, by means of elliptic regularity theory for Poisson's equation with an inhomogeneous Dirichlet boundary condition (see, e.g., \cite[Theorem 3.2]{Brezzi1987} or \cite[Theorem A.2]{Colli2019a}), we find that $u\in W^{2,p}(\Om)$ with
	\begin{align}\label{Est:BSE:W2p:u:0}
		\begin{split}
			\norm{u}_{W^{2,p}(\Om)} &\leq C\big(\norm{f}_{L^p(\Om)} + 					\norm{\Grad m_\Om(\phi)\cdot\Grad u}_{L^p(\Om)} + \norm{v}					_{W^{2,p}(\Ga)}\big) \\
			&\leq C\big(\norm{(f,g)}_{\mathcal{L}^p} + 									\norm{(\Grad\phi\cdot\Grad u, \Gradg\psi\cdot\Gradg v)}						_{\mathcal{L}^p}\big).
		\end{split}
	\end{align}
	Combining \eqref{Est:BSE:W2p:v:0} and \eqref{Est:BSE:W2p:u:0}, we eventually obtain \eqref{Est:BSE:W2p}, which finishes the proof in the case $L = 0$.
	
	\textit{Case $L\in(0,\infty)$.} Since $u\in H^2(\Om)$, the trace theorem as well as Sobolev's embedding theorem yield that
	\begin{align*}
		u\in H^{\frac32}(\Ga)\emb W^{t,p}(\Ga) \quad\text{with}\quad t = 2 + \frac{d-1}{p} - \frac d2.
	\end{align*}
	Then, since $\phi\in W^{2,4}(\Om)$, the trace theorem and the Sobolev embedding theorem further imply that
	\begin{align*}
		\Grad\phi\in W^{1,4}(\Om)\emb W^{\frac34,4}(\Ga) \emb L^\infty(\Ga).
	\end{align*}
	Consequently, recalling that the mobility function $m_\Om\in C^1([-1,1])$ satisfies \eqref{Ass:Mobility:Bound}, we readily deduce that $\frac{1}{m_\Om(\phi)} \in W^{1,\infty}(\Ga)$. As $v\in H^2(\Ga)\emb W^{t,p}(\Ga)$, we find that
	\begin{align*}
		\deln u = \frac{1}{Lm_\Om(\phi)}(\beta v - u)\in W^{t,p}(\Ga).
	\end{align*}
	Then, since $g,\Gradg\psi\cdot\Gradg v\in L^p(\Ga)$, we infer from regularity theory for the Laplace--Beltrami equation (see, e.g., \cite[Lemma~B.1]{Vitillaro2017}) that
	\begin{align}\label{Est:BSE:W2p:v:L}
		\begin{split}
			\norm{v}_{W^{2,p}(\Ga)} &\leq C\big(\norm{g}_{L^p(\Ga)} + 					\norm{\Gradg m_\Ga(\psi)\cdot\Gradg v}_{L^p(\Ga)} + 						\norm{\deln u}_{W^{t,p}(\Ga)}\big) \\
			&\leq C\big(\norm{g}_{L^p(\Ga)} + \norm{\Gradg\psi\cdot\Gradg 			v}_{L^p(\Ga)} + \norm{(u,v)}_{\mathcal{H}^2}\big) \\
			&\leq C\big(\norm{(f,g)}_{\mathcal{L}^p} + 									\norm{(\Grad\phi\cdot\Grad u,\Gradg\psi\cdot\Gradg v)}						_{\mathcal{L}^p}\big).
		\end{split}
	\end{align}
	Moreover, exploiting again $f,\Grad\phi\cdot\Grad u\in L^p(\Om)$, elliptic regularity theory for Poisson's equation with an inhomogeneous Neumann boundary condition (see, e.g., \cite[Theorem 3.2]{Brezzi1987} or \cite[Theorem A.2]{Colli2019a}) provides that
	\begin{align*}
		u\in W^{s,p}(\Om) \quad\text{with}\quad s = \min\Big\{2,3 + \frac dp - \frac d2\Big\} \geq 1 + \frac2p
	\end{align*}
	together with the estimate
	\begin{align*}
		\norm{u}_{W^{s,p}} &\leq C\big( \norm{f}_{L^p(\Om)} + 						\norm{\Grad m_\Om(\phi)\cdot\Grad u}_{L^p(\Om)} + \norm{\deln 			u}_{W^{t,p}(\Ga)}\big) \\
		&\leq C\big(\norm{(f,g)}_{\mathcal{L}^p} + \norm{(\Grad\phi\cdot\Grad u,\Gradg\psi\cdot\Gradg v)}_{\mathcal{L}^p}\big).
	\end{align*}
	Then, since $1+ \frac2p - \frac1p = 1 + \frac1p$ is positive and not an integer, the trace theorem shows that
	\begin{align*}
		u\vert_\Ga\in W^{1+\frac1p,p}(\Ga),
	\end{align*}
	which entails
	\begin{align*}
		\deln u = \frac{1}{Lm_\Om(\phi)}(\beta v - u)\in W^{1+\frac1p,p}(\Ga).
	\end{align*}
	Finally, using once more elliptic regularity theory for Poisson's equation with an inhomogeneous Neumann boundary condition (see, e.g., \cite[Theorem 3.2]{Brezzi1987} or \cite[Theorem A.2]{Colli2019a}), we conclude that $u\in W^{2,p}(\Om)$ along with
	\begin{align}\label{Est:BSE:W2p:u:L}
		\begin{split}
			\norm{u}_{W^{2,p}(\Om)} &\leq C\big(\norm{f}_{L^p(\Om)} + \norm{\Grad m_\Om(\phi)\cdot\Grad u}_{L^p(\Om)} + \norm{\deln u}_{W^{1+\frac1p,p}(\Ga)}\big) \\
			&\leq C\big(\norm{(f,g)}_{\mathcal{L}^p} + \norm{(\Grad\phi\cdot\Grad u,\Gradg\psi\cdot\Gradg v)}_{\mathcal{L}^p}\big).
		\end{split}
	\end{align}
	Combining the estimates \eqref{Est:BSE:W2p:v:L} and \eqref{Est:BSE:W2p:u:L} immediately yields \eqref{Est:BSE:W2p}.
	
	\textit{Case $L = \infty$.} In this case, both the Poisson--Neumann problem \eqref{Poisson-Problem}-\eqref{Poisson-Problem:BC} and the Laplace--Beltrami equation \eqref{LaplceBeltrami-Problem} are completely decoupled. Hence, we can apply elliptic regularity theory for Poisson's equation with a homogeneous Neumann boundary condition (see, e.g., \cite[Theorem 3.2]{Brezzi1987} or \cite[Theorem A.2]{Colli2019a}) and regularity theory for the Laplace--Beltrami equation (see, e.g., \cite[Lemma~B.1]{Vitillaro2017}), respectively, and directly reach the conclusion with the respective estimates, which finishes the proof.
\end{proof}

\textbf{Further properties of $\mathcal{S}_{L}$ and $\mathcal{S}_{L}[\phi,\psi]$ in two dimensions}. In the last part of the section, we provide two useful estimates for $\mathcal{S}_{L}[\phi,\psi]$ for $d = 2$ which are based on the estimates \eqref{Est:Sol:G:H^2:thm} and \eqref{Est:BSE:W2p}. These will be crucial for the mathematical analysis of \eqref{EQ:SYSTEM}. To this end, let $\Omega\subset\R^2$ be of class $C^2$ and $(\phi,\psi)\in\mathcal{W}^{2,4}$ with $\abs{\phi}\leq 1$ a.e. in $\Om$ and $\abs{\psi}\leq 1$ a.e. on $\Ga$. We further assume that the mobility functions $m_\Om,m_\Ga\in C^1([-1,1])$ satisfy \ref{Ass:Mobility}.

Let $(f,g)\in\mathcal{V}^{-1}_{L}\cap\mathcal{L}^2$. Then, using \eqref{Prelim:Est:Inteprol} and \eqref{Est:Sol:G:H^2} , we find
\begin{align*}
	\norm{\mathcal{S}_{L}[\phi,\psi](f,g)}_{\mathcal{H}^2} &\leq C\big(\norm{(\Grad\phi\cdot\Grad\mathcal{S}_{L}^\Om[\phi,\psi](f,g),\Gradg\psi\cdot\Gradg\mathcal{S}_{L}^\Ga[\phi,\psi](f,g))}_{\mathcal{L}^2} + \norm{(f,g)}_{\mathcal{L}^2}\big) \\
	&\leq C\big(\norm{(\Grad\phi,\Gradg\psi)}_{\mathcal{L}^4}\norm{(\Grad\mathcal{S}_{L}^\Om[\phi,\psi](f,g),\Gradg\mathcal{S}_{L}^\Ga[\phi,\psi](f,g))}_{\mathcal{L}^4} + \norm{(f,g)}_{\mathcal{L}^2} \big) \\
	&\leq C\big(\norm{(\Grad\phi,\Gradg\psi)}_{\mathcal{L}^2}^{\frac12}\norm{(\phi,\psi)}_{\mathcal{H}^2}^{\frac12}\norm{\mathcal{S}_{L}[\phi,\psi](f,g)}_{L}^{\frac12}\norm{\mathcal{S}_{L}[\phi,\psi](f,g)}_{\mathcal{H}^2}^{\frac12} \\
	&\qquad + \norm{(f,g)}_{\mathcal{L}^2}\big).
\end{align*}
Consequently, we infer from Young's inequality that
\begin{align}\label{Est:Sol:G:H^2}
	\norm{\mathcal{S}_{L}[\phi,\psi](f,g)}_{\mathcal{H}^2} \leq C\big(\norm{(\Grad\phi,\Gradg\psi)}_{\mathcal{L}^2}\norm{(\phi,\psi)}_{\mathcal{H}^2}\norm{\mathcal{S}_{L}[\phi,\psi](f,g)}_{L} + \norm{(f,g)}_{\mathcal{L}^2}\big).
\end{align}

Furthermore, if $(f,g)\in\mathcal{V}^{-1}_{L}\cap\mathcal{L}^4$, we can choose $p = 4$ in \eqref{Est:BSE:W2p}, and exploiting \eqref{Prelim:Est:Inteprol}, we find
\begin{align}\label{Est:Sol:G:W^24}
	&\norm{\mathcal{S}_{L}[\phi,\psi](f,g)}_{\mathcal{W}^{2,4}} \nonumber \\
    &\quad\leq C\big(\norm{(\Grad\phi,\Gradg\psi)}_{\mathcal{L}^8}\norm{(\Grad\mathcal{S}_{L}^\Om[\phi,\psi](f,g),\Gradg\mathcal{S}_{L}^\Ga[\phi,\psi](f,g))}_{\mathcal{L}^8} + \norm{(f,g)}_{\mathcal{L}^4}\big) \\
	&\quad\leq C\big(\norm{(\Grad\phi,\Gradg\psi)}_{\mathcal{L}^2}^{\frac14}\norm{(\phi,\psi)}_{\mathcal{H}^2}^{\frac34}\norm{\mathcal{S}_{L}[\phi,\psi](f,g)}_{L}^{\frac14}\norm{\mathcal{S}_{L}[\phi,\psi](f,g)}_{\mathcal{H}^2}^{\frac34} + \norm{(f,g)}_{\mathcal{L}^4}\big). \nonumber
\end{align}

\medskip
\section{Existence and Uniqueness of Weak Solutions}
\label{Section:Uniqueness}

In this section, we present the proof of Theorem~\ref{Theorem} regarding the well-posedness of weak solutions of system~\eqref{EQ:SYSTEM}.
\begin{proof}[Proof of Theorem~\ref{Theorem}]
The proof is divided into several steps. We start with some basic estimates on a weak solution.

\noindent
\textbf{Properties of weak solutions.} Let $(\phi,\psi,\mu,\theta)$ be a global weak solution as in Definition~\ref{DEF:SING:WS} given by, e.g., \cite[Theorem~3.4]{Knopf2025} or \cite[Theorem~3.2]{Giorgini2025}.  First, we deduce from \eqref{PROP:CONF} and the energy inequality \eqref{WEDL:SING} that
\begin{align}\label{Est:PP:H1:Linfty}
	\sup_{t\geq 0} \norm{(\phi(t),\psi(t))}_{\mathcal{H}^1} \leq C
\end{align}
as well as
\begin{align}\label{Est:MT:LB:L2}
	\int_0^\infty \norm{(\mu(s),\theta(s))}_{L}^2 \ds \leq C,
\end{align}
for some constant depending on $E(\phi_0,\psi_0)$, $\abs{\Om}$ and $\abs{\Ga}$. Here, we have additionally used \eqref{NormEquivalence:1}. The latter implies that $\norm{(\mu,\theta)}_{L}\in L^2(0,\infty)$. Then, by definition of $\mathcal{S}_{L}[\phi,\psi](\delt\phi,\delt\psi)$, it is clear that
\begin{alignat}{2}\label{Id:M:SolOp:Mean}
    \mu &= -\mathcal{S}_{L}^\Om[\phi,\psi](\delt\phi,\delt\psi) + \beta\mean{\mu}{\theta} &&\qquad\text{a.e.~in~}\Om\times(0,\infty), \\
    \theta &= -\mathcal{S}_{L}^\Ga[\phi,\psi](\delt\phi,\delt\psi) + \mean{\mu}{\theta} &&\qquad\text{a.e.~on~} \label{Id:T:SolOp:Mean}\Ga\times(0,\infty).
\end{alignat}
Thus, exploiting \eqref{Id:M:SolOp:Mean}-\eqref{Id:T:SolOp:Mean}, we obtain
\begin{align}\label{Est:Sol:Delt:pp:Lb}
    \norm{\mathcal{S}_{L}[\phi,\psi](\delt\phi,\delt\psi)}_{L} \leq C\norm{(\mu,\theta)}_{L},
\end{align}
from which we deduce with \eqref{Est:MT:LB:L2} and the equivalence of the corresponding norms on $\mathcal{V}^{-1}_{L}$ that
\begin{align}\label{EST:DELT:PP:H1Lb':L2}
	\int_0^\infty \norm{(\delt\phi,\delt\psi)}_{(\mathcal{H}^1_L)^\prime}^2\ds \leq C.
\end{align}
This proves $(\delt\phi,\delt\psi)\in L^2(0,\infty;(\mathcal{H}^1_L)^\prime)$.
Next, analyzing the proof of \cite[Theorem~3.4]{Knopf2025} reveals the estimate
\begin{align}\label{Est:MEAN:MT:DELN}
	\abs{\mean{\mu}{\theta}} \leq C\big(1 + \norm{(\mu,\theta)}_{L}\big),
\end{align}
from which we immediately deduce, after another application of the bulk-surface Poincar\'{e} inequality that
\begin{align}\label{Est:MT:H1:a.e.}
	\norm{(\mu,\theta)}_{\mathcal{H}^1} \leq C\big(1 + \norm{(\mu,\theta)}_{L}\big).
\end{align}
Hence, $(\mu,\theta)\in L^2_{\mathrm{uloc}}([0,\infty);\mathcal{H}^1)$.
The inequality \eqref{Est:MEAN:MT:DELN} is based on the Miranville--Zelik inequality (see \cite[Appendix A.1]{Miranville2004} or \cite[p.~908]{Gilardi2009}). Then, noticing that
\begin{alignat*}{2}
	-\Lap\phi + F_1^\prime(\phi) &= \mu^\ast &&\qquad\text{a.e.~in~}\Om, \\
	-\Lapg\psi + G_1^\prime(\psi) + \alpha\deln\phi &= \theta^\ast &&\qquad\text{a.e.~on~}\Ga, \\
	K\deln\phi &= \alpha\psi - \phi &&\qquad\text{a.e.~on~}\Ga,
\end{alignat*}
almost everywhere in $(0,\infty)$, where $(\mu^\ast,\theta^\ast) = (\mu - F_2^\prime(\phi),\theta - G_2^\prime(\psi)) \in\mathcal{H}^1$, an application of \cite[Proposition~6.5]{Giorgini2025} together with the Lipschitz continuity of $F_2^\prime$ and $G_2^\prime$, respectively, as well as \eqref{Est:PP:H1:Linfty} yields that
\begin{align}\label{Est:PP:Pot:L^p}
	\norm{(\phi,\psi)}_{\mathcal{W}^{2,p}} + 				\norm{(F_1^\prime(\phi),G_1^\prime(\psi))}_{\mathcal{L}^p} \leq C\big(1 + \norm{(\mu,\theta)}_{\mathcal{H}^1}		\big)
\end{align}
almost everywhere in $(0,\infty)$ for all $2\leq p < \infty$. 
% To control the normal derivative $\deln\phi$ appearing on the right-hand side, we recall that by the trace theorem \cite[Chapter~2, Theorem~2.24]{Brezzi1987}, there exists $s\in(\frac32,2)$ and a constant $C_s > 0$ such that
% \begin{align*}
% 	\norm{\deln\phi}_{L^2(\Ga)} \leq C_s \norm{\phi}_{H^s(\Om)}.
% \end{align*}
% Thus, combining the compact embeddings $W^{2,p}(\Om)\emb H^s(\Om)\emb L^2(\Om)$ together with Ehrling's lemma, we obtain
% \begin{align*}
% 	\norm{\phi}_{W^{2,p}(\Om)} + \norm{\deln\phi}				_{L^2(\Ga)} &\leq C\big(1 + \norm{(\mu,\theta)}_{L,			\beta} + \norm{\phi}_{H^s(\Om)}\big) \\
% 	&\leq \frac12\norm{\phi}_{W^{2,p}(\Om)} + C\big(1 + 		\norm{(\mu,\theta)}_{L}\big).
% \end{align*}
% Consequently, we deduce
% \begin{align}\label{Est:PP:DELN:a.e.}
% 	\norm{(\phi,\psi)}_{\mathcal{W}^{2,p}} + \norm{\deln\phi}_{L^2(\Ga)} \leq C\big(1 + \norm{(\mu,\theta)}_{L}\big).
% \end{align}
% The latter entails that $(\phi,\psi)\in L^2_{\mathrm{uloc}}([0,\infty);\mathcal{W}^{2,p})$ and $(F^\prime(\phi),G^\prime(\psi))\in L^2_{\mathrm{uloc}}([0,\infty);\mathcal{L}^p)$ for any $2 \leq p < \infty$. 
% Moreover,
% \begin{align}\label{Est:POT:Lp:a.e.}
% 	\norm{(F_1^\prime(\phi),G_1^\prime(\psi))}_{\mathcal{L}^p} \leq C\big( 1 + \norm{(\mu,\theta)}_{L}\big),
% \end{align}
% which shows that $(F^\prime(\phi),G^\prime(\psi))\in L^2_{\mathrm{uloc}}([0,\infty);\mathcal{L}^p)$ for any $2 \leq p < \infty$.
Lastly, noting on $K\in(0,\infty)$ and applying \cite[Corollary~5.4]{Giorgini2025}, we find that
\begin{align*}
	\norm{(-\Lap\phi,-\Lapg\psi + \alpha\deln\phi)}_{\mathcal{L}^2}^2 \leq C\big(1 + \norm{(\mu + F_2^\prime(\phi),\theta + G_2^\prime(\psi))}_{\mathcal{H}^1}) \leq C\big(1 + \norm{(\mu,\theta)}_{L}\big).
\end{align*}
Thus, by \eqref{Est:MT:LB:L2} and elliptic regularity theory for systems with bulk-surface coupling (see, e.g., \cite[Theorem~3.3]{Knopf2021}), we have
\begin{align*}
	\sup_{t\geq 0}\int_t^{t+1}\norm{(\phi(s),\psi(s))}_{\mathcal{H}^2}^4\ds \leq C,
\end{align*}
which provides $(\phi,\psi)\in L^4_{\mathrm{uloc}}([0,\infty);\mathcal{H}^2)$.

\noindent
\textbf{Continuous dependence estimate and uniqueness of weak solutions.}
In what follows, we restrict ourselves to the case $L\in[0,\infty)$. This case distinction is needed due to a different mass conservation in the cases $L\in[0,\infty)$ and $L = \infty$, see \eqref{MCL:SING}. The case $L = \infty$ can be handled more easily with the obvious modifications.
To this end, let $(\phi_0^1,\psi_0^1)$ and $(\phi_0^2,\psi_0^2)$ be two admissible pairs of initial data satisfying \eqref{cond:init:int}-\eqref{cond:init:mean:inf}, and consider two weak solutions $(\phi_1,\psi_1,\mu_1,\theta_1)$ and $(\phi_2,\psi_2,\mu_2,\theta_2)$ originating from $(\phi_1^0,\psi_1^0)$ and $(\phi_0^2,\psi_0^2)$, respectively. Then, defining $(\Phi,\Psi) = (\phi_1 - \phi_2, \psi_1 - \psi_2)$, we have
\begin{align}\label{EQ:DELT:PHIPSI}
    \begin{split}
        &\bigang{(\delt\Phi,\delt\Psi)}{(\zeta,\xi)}_{\mathcal{H}^1_L} \\
        &\quad= - \intO m_\Om(\phi_1)\Grad(\mu_1 - \mu_2)\cdot\Grad\zeta\dx - \intG m_\Ga(\psi_1)\Gradg(\theta_1 - \theta_2)\cdot\Gradg\xi\dG \\
        &\qquad - \chi(L) \intG (\beta(\theta_1 - \theta_2) - (\mu_1 - \mu_2))(\beta\xi - \zeta)\dG \\
        &\qquad - \intO \big(m_\Om(\phi_1) - m_\Ga(\psi_1)\big)\Grad\mu_2\cdot\Grad\zeta\dx - \intG \big(m_\Ga(\psi_1) - m_\Ga(\psi_2)\big)\Gradg\theta_2\cdot\Gradg\xi\dG
    \end{split}
\end{align}
a.e. on $(0,\infty)$ for all $(\zeta,\xi)\in\mathcal{H}^1_L$, as well as
\begin{alignat}{2}
	-\Lap\Phi + F^\prime(\phi_1) - F^\prime(\phi_2) &= \mu_1 - \mu_2 &&\qquad\text{a.e.~in~}\Om\times(0,\infty), \label{EQ:PHI}\\
	-\Lapg\Psi + G^\prime(\psi_1) - G^\prime(\psi_2) + \alpha\deln\Phi &= \theta_1 - \theta_2 &&\qquad\text{a.e.~in~}\Ga\times(0,\infty), \label{EQ:PSI} \\
    K\deln\Phi &= \alpha\Psi - \Phi &&\qquad\text{a.e.~in~}\Ga\times(0,\infty). \label{EQ:BC:PHI}
\end{alignat}
Now, we multiply \eqref{EQ:PHI} with $\Phi$ and \eqref{EQ:PSI} with $\Psi$, integrate over $\Om$ and $\Ga$, respectively, and perform integration by parts. Adding the resulting equations leads to
\begin{align}\label{pre:comp}
    \begin{split}
    	&\norm{(\Phi,\Psi)}_{K}^2 + \intO \big(F_1^\prime(\phi_1) - F_1^\prime(\phi_2)\big)\Phi\dx + \intG \big(G_1^\prime(\psi_1) - G_1^\prime(\psi_2)\big)\Psi\dG \\
    	&\qquad - \intO \big(\mu_1 - \mu_2)\Phi\dx - \intG \big(\theta_1 - \theta_2\big)\Psi\dG \\
    	&\quad = \intO \big(F_2^\prime(\phi_1) - F_2^\prime(\phi_2)\big)\Phi\dx + \intG \big(G_2^\prime(\psi_1) - G_2^\prime(\psi_2)\big)\Psi\dG
    \end{split}
\end{align}
almost everywhere on $(0,\infty)$. Next, we want to rewrite the integrals involving the chemical potentials in terms of the solution operator $\mathcal{S}_{L}[\phi_1,\psi_1](\delt\Phi,\delt\Psi)$. To this end, to make the following computations more readable, we use the abbreviation $\mathcal{S}_j(f,g) = \mathcal{S}_{L}[\phi_j,\psi_j](f,g)$ for $j=1,2$ as well as $(\cdot,\cdot)_{L,j} = (\cdot,\cdot)_{L,[\phi_j,\psi_j]}$ and find
\begin{align}\label{comp:mt:1-2}
    %\begin{split}
        &- \intO \big(\mu_1 - \mu_2)\Phi\dx - \intG \big(\theta_1 - \theta_2\big)\Psi\dG \nonumber \\
        &\quad = -\big(\mathcal{S}_1(\Phi,\Psi),(\mu_1 - \mu_2,\theta_1 - \theta_2))_{L,1} \nonumber \\
        &\quad = \bigang{(\delt\Phi,\delt\Psi)}{\mathcal{S}_1(\Phi,\Psi)}_{\mathcal{H}^1_L} - \intO \big(m_\Om(\phi_1) - m_\Om(\phi_2)\big)\Grad\mu_2\cdot\Grad\mathcal{S}^\Om_1(\Phi,\Psi)\dx \nonumber \\
        &\qquad - \intG \big(m_\Ga(\psi_1) - m_\Ga(\psi_2)\big)\Gradg\theta_2\cdot\Gradg\mathcal{S}^\Ga_1(\Phi,\Psi)\dG \nonumber \\
        &\quad = \big(\mathcal{S}_1(\delt\Phi,\delt\Psi),\mathcal{S}_1(\Phi,\Psi)\big)_{L,1} - \intO \big(m_\Om(\phi_1) - m_\Om(\phi_2)\big)\Grad\mu_2\cdot\Grad\mathcal{S}^\Om_1(\Phi,\Psi)\dx \\
        &\qquad - \intG \big(m_\Ga(\psi_1) - m_\Ga(\psi_2)\big)\Gradg\theta_2\cdot\Gradg\mathcal{S}^\Ga_1(\Phi,\Psi)\dG \nonumber \\
        &\quad =\big((\Phi,\Psi),\mathcal{S}_1(\delt\Phi,\delt\Psi)\big)_{\mathcal{L}^2} - \intO \big(m_\Om(\phi_1) - m_\Om(\phi_2)\big)\Grad\mu_2\cdot\Grad\mathcal{S}^\Om_1(\Phi,\Psi)\dx \nonumber \\
        &\qquad - \intG \big(m_\Ga(\psi_1) - m_\Ga(\psi_2)\big)\Gradg\theta_2\cdot\Gradg\mathcal{S}^\Ga_1(\Phi,\Psi)\dG \nonumber 
    %\end{split}
\end{align}
a.e. on $(0,\infty)$, using, in this order, the weak formulations for $\mathcal{S}_1(\Phi,\Psi)$, for $(\delt\Phi,\delt\Psi)$ (see \eqref{EQ:DELT:PHIPSI}), for $\mathcal{S}_1(\delt\Phi,\delt\Psi)$ and again for $\mathcal{S}_1(\Phi,\Psi)$. Plugging the identity \eqref{comp:mt:1-2} back into \eqref{pre:comp} yields
\begin{align*}
	&\norm{(\Phi,\Psi)}_{K}^2 + \intO \big(F_1^\prime(\phi_1) - F_1^\prime(\phi_2)\big)\Phi\dx + \intG \big(G_1^\prime(\psi_1) - G_1^\prime(\psi_2)\big)\Psi\dG \\
	&\qquad  + \intO \mathcal{S}_1(\delt\Phi,\delt\Psi)\Phi\dx + \intG \mathcal{S}_1(\delt\Phi,\delt\Psi)\Psi\dG \\
    &\qquad - \intO \big(m_\Om(\phi_1) - m_\Om(\phi_2)\big)\Grad\mu_2\cdot\Grad\mathcal{S}^\Om_1(\Phi,\Psi)\dx - \intG \big(m_\Ga(\psi_1) - m_\Ga(\psi_2)\big)\Gradg\theta_2\cdot\Gradg\mathcal{S}^\Ga_1(\Phi,\Psi)\dG \\
	&\quad = \intO \big(F_2^\prime(\phi_1) - F_2^\prime(\phi_2)\big)\Phi\dx + \intG \big(G_2^\prime(\psi_1) - G_2^\prime(\psi_2)\big)\Psi\dG.
\end{align*}
Now, we claim that
\begin{align}\label{ChainRuleFormula}
	\begin{split}
	&\intO \mathcal{S}_1^\Om(\delt\Phi,\delt\Psi)\Phi\dx + \intG \mathcal{S}_1^\Ga(\delt\Phi,\delt\Psi)\Psi\dG  \\
	&\quad = \ddt\frac12 \norm{(\Phi,\Psi)}_{L,[\phi_1,\psi_1],\ast}^2 \\
	&\qquad + \frac12 \Big(\mathcal{S}_1(\delt\phi_1,\delt\psi_1),\big(m_\Om^\prime(\phi_1)\abs{\Grad\mathcal{S}_1^\Om(\Phi,\Psi)}^2,m_\Ga^\prime(\psi_1)\abs{\Gradg\mathcal{S}_1^\Ga(\Phi,\Psi)}^2\big)\Big)_{L} \\
    &\quad = \ddt\frac12 \norm{(\Phi,\Psi)}_{L,[\phi_1,\psi_1],\ast}^2 \\
    &\qquad + \frac12\intO \Grad\mathcal{S}_1^\Om(\delt\phi_1,\delt\psi_1)\cdot m_\Om^{\prime\prime}(\phi_1)\Grad\phi_1\abs{\Grad\mathcal{S}_1^\Om(\Phi,\Psi)}^2\dx \\
    &\qquad + \frac12 \intG\Gradg\mathcal{S}_1^\Ga(\delt\phi_1,\delt\psi_1)\cdot m_\Ga^{\prime\prime}(\psi_1)\Gradg\psi_1\abs{\Gradg\mathcal{S}_1^\Ga(\Phi,\Psi)}^2\dG \\
    &\qquad + \intO \Grad\mathcal{S}_1^\Om(\delt\phi_1,\delt\psi_1)\cdot m_\Om^\prime(\phi_1) D^2\mathcal{S}_1^\Om(\Phi,\Psi)\Grad\mathcal{S}_1^\Om(\Phi,\Psi)\dx \\ 
    &\qquad + \intG \Gradg\mathcal{S}_1^\Ga(\delt\phi_1,\delt\psi_1)\cdot m_\Ga^\prime(\psi_1)D^2_\Ga\mathcal{S}_1^\Ga(\Phi,\Psi)\Gradg\mathcal{S}_1^\Ga(\Phi,\Psi)\dG \\ 
    &\qquad + \frac12\chi(L)\intG \big(\beta\mathcal{S}_1^\Ga(\delt\phi_1,\delt\psi_1) - \mathcal{S}_1^\Om(\delt\phi_1,\delt\psi_1)\big)\\
    &\qquad\qquad\qquad\qquad\times\big(\beta m_\Ga^\prime(\psi_1)\abs{\Gradg\mathcal{S}_1^\Ga(\Phi,\Psi)}^2 - m_\Om^\prime(\phi_1)\abs{\Gradg\mathcal{S}_1^\Om(\Phi,\Psi)}^2\big)\dG.
	\end{split}
\end{align}
Here, $D^2 f$ and $D^2_\Ga g$ denote the Hessian and the surface Hessian of $f$ and $g$, respectively. Then, having \eqref{ChainRuleFormula} at hand, we exploit the monotonicity of $F_1^\prime$ and $G_1^\prime$ as well as the Lipschitz continuity of $F_2^\prime$ and $G_2^\prime$, respectively, which leads to 
\begin{align}\label{DiffIneq}
		\ddt\frac12\norm{(\Phi,\Psi)}_{L,[\phi_1,\psi_1],\ast}^2 + \norm{(\Phi,\Psi)}_{K}^2 \leq C\norm{(\Phi,\Psi)}_{\mathcal{L}^2}^2 + I_1 + I_2,
\end{align}
where
\begin{align*}
	I_1 &=  -\frac12 \Big(\mathcal{S}_1(\delt\phi_1,\delt\psi_1),\big(m_\Om^\prime(\phi_1)\abs{\Grad\mathcal{S}_1^\Om(\Phi,\Psi)}^2,m_\Ga^\prime(\psi_1)\abs{\Gradg\mathcal{S}_1^\Ga(\Phi,\Psi)}^2\big)\Big)_{L},
\end{align*}
and
\begin{align*}
	I_2 =  \intO \big(m_\Om(\phi_1) - m_\Om(\phi_2)\big)\Grad\mu_2\cdot\Grad\mathcal{S}^\Om_1(\Phi,\Psi)\dx + \intG \big(m_\Ga(\psi_1) - m_\Ga(\psi_2)\big)\Gradg\theta_2\cdot\Gradg\mathcal{S}^\Ga_1(\Phi,\Psi)\dG.
\end{align*}

The rest of the proof now consists of justifying the chain-rule formula \eqref{ChainRuleFormula} as well as estimating the nonlinear terms $I_1$ and $I_2$.

\textbf{Proof of \eqref{ChainRuleFormula}.} Let $\rho\in C_c^\infty(\R)$ be non-negative with $\mathrm{supp}\,\rho\subset(0,1)$ and $\norm{\rho}_{L^1(\R)} = 1$. For $k\in\N$, we set
\begin{align*}
	\rho_k(s) \coloneqq k\rho(ks), \qquad s\in\R.
\end{align*}
Then, for any Banach space $X$ and any function $f\in L^p(-1,T;X)$ with $2\leq p < \infty$, we define
\begin{align}\label{Def:f_k}
	f_k(t) \coloneqq (\rho_k\ast f)(t) = \int_{t-\frac1k}^t \rho_k(t-s)f(s)\ds
\end{align}
for all $t\in[0,T]$ and all $k\in\N$. By this construction, we have $f_k\in C^\infty([0,T];X)$ with $f_k\rightarrow f$ strongly in $L^p(0,T;X)$ as $k\rightarrow\infty$. 

Now, let $T > 0$, $k\in\N$ and $p = 4$. We choose $X = H^2(\Om)$ to define $\phi_1^k$ and $X = H^2(\Ga)$ to define $\psi_1^k$ as described above. By this construction, we then have $\delt\phi_1^k = (\delt\phi_1)^k$ and $\delt\Grad\phi_1^k = \Grad\delt\phi_1^k$ a.e.~in $Q$ as well as $\delt\psi_1^k = (\delt\psi_1)^k$ and $\delt\Gradg\psi_1^k = \Gradg\delt\psi_1^k$ a.e.~on $\Sigma$ for all $k\in\N$. Moreover, as $k\rightarrow\infty$,
\begin{alignat}{2}
	\phi_1^k &\rightarrow\phi_1 &&\qquad\text{strongly in~}L^4(0,T;H^2(\Om)), \label{Conv:Phi:H^2:k} \\
	\psi_1^k &\rightarrow\psi_1 &&\qquad\text{strongly in~}L^4(0,T;H^2(\Ga)), \label{Conv:Psi:H^2:k} \\
	% (\phi_1^k,\psi_1^k)&\rightarrow(\phi_1,\psi_1) &&\qquad\text{strongly in~} L^2(0,T;\mathcal{H}^1_K), \label{Conv:pp:H^1:k} \\
	(\delt\phi_1^k,\delt\psi_1^k) &\rightarrow(\delt\phi_1,\delt\psi_1^k) &&\qquad\text{strongly in~}L^2(0,T;(\mathcal{H}^1_L)^\prime). \label{Conv:delt:pp:H^1:k}
\end{alignat}
In particular, along a non-relabeled subsequence, as $k\rightarrow\infty$,
\begin{alignat}{3}
	\phi_1^k &\rightarrow \phi_1, \quad \Grad\phi_1^k&&\rightarrow\Grad\phi_1 &&\qquad\text{a.e. in~} \Om\times(0,T), \label{Conv:Phi:a.e.} \\
	\psi_1^k &\rightarrow \psi_1, \quad \Gradg\psi_1^k &&\rightarrow \Gradg\psi_1 &&\qquad\text{a.e. on~} \Ga\times(0,T). \label{Conv:Psi:a.e.}
\end{alignat}
Additionally, we find that
\begin{align}\label{Est:PP:k:LinftyH^1}
    \begin{split}
    	\norm{\phi_1^k}_{L^\infty(0,T;H^1(\Om))} &\leq \norm{\phi_1}_{L^\infty(0,T;H^1(\Om))},
    	\\
    	 \norm{\psi_1^k}_{L^\infty(0,T;H^1(\Ga))} &\leq \norm{\psi_1}_{L^\infty(0,T;H^1(\Ga))},
    \end{split}
\end{align}
as well as
\begin{align}\label{Est:PP:k:infty}
	\abs{\phi_k} \leq 1 \quad\text{a.e.~in~}Q, \qquad \abs{\psi_k} \leq 1 \quad\text{a.e.~on~}\Sigma
\end{align}
for all $k\in\N$. We further point out that in light of the convergences \eqref{Conv:Phi:H^2:k}-\eqref{Conv:Psi:H^2:k} we infer the existence of $k_\ast\in\N$ such that
\begin{align}\label{Est:PP:k:L^4H^2}
	\norm{(\phi_1^k,\psi_1^k)}_{L^4(0,T;\mathcal{H}^2)} \leq 1 + \norm{(\phi_1,\psi_1)}_{L^4(0,T;\mathcal{H}^2)} \qquad\text{for all~}k\geq k_\ast.
\end{align}
In the following, we assume that $k\in\N$ satisfies $k\geq k_\ast$. Furthermore, we denote by the letter $C$ a generic positive constant whose value may change from line to line, but is independent of the parameter $k\in\N$.
We now define, for $k\in\N$, the sequence
\begin{align*}
	\mathcal{S}_k(\Phi,\Psi) \coloneqq \mathcal{S}_{L}[\phi_k,\psi_k](\Phi,\Psi).
\end{align*}
It readily follows from \eqref{Est:EBS:Apriori} and \eqref{Est:PP:k:infty} that
\begin{align}\label{Est:G:k:LinftyH^1}
	\norm{\mathcal{S}_k(\Phi,\Psi)}_{L^\infty(0,T;\mathcal{H}^1)} \leq C.
\end{align}
Next, in view of \eqref{Est:PP:k:LinftyH^1} and \eqref{Est:G:k:LinftyH^1}, an application of \eqref{Est:Sol:G:H^2} entails that
\begin{align*}
	\norm{\mathcal{S}_k(\Phi,\Psi)}_{\mathcal{H}^2} \leq C\big(1 + \norm{(\phi_1^k,\psi_1^k)}_{\mathcal{H}^2}\big),
\end{align*}
which implies that
\begin{align}\label{Est:Gk:PP:L4H2}
	\int_0^T \norm{\mathcal{S}_k(\Phi,\Psi)}_{\mathcal{H}^2}^4\ds \leq CT + C\int_0^T \norm{(\phi_1^k,\psi_1^k)}_{\mathcal{H}^2}^4\ds \leq C,
\end{align}
due to \eqref{Est:PP:k:L^4H^2}. Furthermore, employing the estimate \eqref{Est:Sol:G:W^24} in combination with \eqref{Est:PP:k:LinftyH^1} and \eqref{Est:G:k:LinftyH^1}, we deduce that
\begin{align*}
	\norm{\mathcal{S}_k(\Phi,\Psi)}_{\mathcal{W}^{2,4}} &\leq C\big(\norm{(\Grad\phi_1^k,\Gradg\psi_1^k)}_{\mathcal{L}^2}^{\frac14}\norm{(\phi_1^k,\psi_1^k)}_{\mathcal{H}^2}^{\frac34}\norm{\mathcal{S}_k(\Phi,\Psi)}_{L}^{\frac14}\norm{\mathcal{S}_k(\Phi,\Psi)}_{\mathcal{H}^2}^{\frac34} + \norm{(\Phi,\Psi)}_{\mathcal{L}^4}\big) \\
	&\leq C\big(1 + \norm{(\phi_1^k,\psi_1^k)}_{\mathcal{H}^2}^{\frac34}\norm{\mathcal{S}_k(\Phi,\Psi)}_{\mathcal{H}^2}^{\frac34}\big).
\end{align*}
Hence, integrating the foregoing inequality in time over $(0,T)$ and using Young's inequality together with \eqref{Est:PP:k:L^4H^2} and \eqref{Est:Gk:PP:L4H2}, we have
\begin{align}\label{Est:Gk:PP:L83W24}
	\int_0^T \norm{\mathcal{S}_k(\Phi,\Psi)}_{\mathcal{W}^{2,4}}^{\frac83}\ds \leq CT + C\int_0^T \norm{(\phi_1^k,\psi_1^k)}_{\mathcal{H}^2}^4 + \norm{\mathcal{S}_k(\Phi,\Psi)}_{\mathcal{H}^2}^4\ds \leq C.
\end{align}

Now, we study the convergence properties of the operator $\mathcal{S}_k(\Phi,\Psi)$ as $k\rightarrow\infty$. To this end, let $(f,g)\in L^2(0,T;\mathcal{V}^{-1}_{L})$. Then, by definition of $\mathcal{S}_1$ and $\mathcal{S}_k$, we know that
\begin{align*}
	&\intO m_\Om(\phi_1)\Grad\mathcal{S}_1^\Om(f,g)\cdot\Grad\zeta\dx + \intG m_\Ga(\psi_1)\Gradg\mathcal{S}_1^\Ga(f,g)\cdot\Gradg\xi\dG \\
	&\qquad + \chi(L) \intG (\beta\mathcal{S}_1^\Ga(f,g) - \mathcal{S}_1^\Om(f,g))(\beta\xi-\zeta)\dG \\
	&\quad = \intO m_\Om(\phi_1^k)\Grad\mathcal{S}_k^\Om(f,g)\cdot\Grad\zeta\dx + \intG m_\Ga(\psi_1^k)\Gradg\mathcal{S}_k^\Ga(f,g)\cdot\Gradg\xi\dG \\
	&\qquad + \chi(L)\intG(\beta\mathcal{S}_k^\Ga(f,g) - \mathcal{S}_k^\Ga(f,g))(\beta\xi-\zeta)\dG
\end{align*}
for all $(\zeta,\xi)\in\mathcal{H}^1_L$, which implies that
\begin{align}\label{WF:S1-Sk}
	&\intO m_\Om(\phi_1)\Grad\big(\mathcal{S}_1^\Om(f,g) - \mathcal{S}_k^\Om(f,g)\big)\cdot\Grad\zeta\dx + \intG m_\Ga(\psi_1)\Gradg\big(\mathcal{S}_1^\Ga(f,g) - \mathcal{S}_k^\Ga(f,g)\big)\cdot\Gradg\xi\dG \nonumber \\
	&\qquad + \chi(L) \intG \big(\beta(\mathcal{S}_1^\Ga(f,g) - \mathcal{S}_k^\Ga(f,g))\big)(\beta\xi - \zeta)\dG \\
	&\quad = \intO \big(m_\Om(\phi_1^k) - m_\Om(\phi_1)\big)\Grad\mathcal{S}_k^\Om(f,g)\cdot\Grad\zeta\dx + \intG \big(m_\Ga(\psi_1^k) - m_\Ga(\psi_1)\big)\Gradg\mathcal{S}_k^\Ga(f,g)\cdot\Gradg\xi\dG \nonumber
\end{align}
for all $(\zeta,\xi)\in\mathcal{H}^1_L$. Therefore, taking $(\zeta,\xi) = \mathcal{S}_1(f,g) - \mathcal{S}_k(f,g)\in\mathcal{H}^1_L$ as a test function in \eqref{WF:S1-Sk}, we find with \eqref{NormEquivalence:1} that
\begin{align}\label{Est:G1-Gk:Lb}
	%\begin{split}
		&\int_0^T \norm{\mathcal{S}_1(f,g) - \mathcal{S}_k(f,g)}_{L}^2\ds \nonumber \\
		&\quad\leq \frac{1}{\min\big\{1,m^\ast\big\}}\int_0^T \norm{\big(m_\Om(\phi_1^k) - m_\Om(\phi_1)\big)\Grad\mathcal{S}_k^\Om(f,g)}_{L^2(\Om)}^2 \ds \nonumber \\
		&\qquad + \frac{1}{\min\big\{1,m^\ast\big\}}\int_0^T\norm{\big(m_\Ga(\psi_1^k) - m_\Ga(\psi_1)\big)\Gradg\mathcal{S}_k^\Ga(f,g)}_{L^2(\Ga)}^2 \ds \\
		&\quad\leq \frac{1}{\min\big\{1,m^\ast\big\}}\int_0^T \norm{m_\Om(\phi_1^k) - m_\Om(\phi_1)}_{L^\infty(\Om)}^2\norm{\Grad\mathcal{S}_k^\Om(f,g)}_{L^2(\Om)}^2\ds \nonumber \\
		&\qquad + \frac{1}{\min\big\{1,m^\ast\big\}}\int_0^T \norm{m_\Ga(\psi_1^k) - m_\Ga(\psi_1)}_{L^\infty(\Ga)}^2\norm{\Gradg\mathcal{S}_k^\Ga(f,g)}_{L^2(\Ga)}^2\ds. \nonumber 
	%\end{split}
\end{align}
We aim to pass to the limit $k\rightarrow\infty$ in \eqref{Est:G1-Gk:Lb} by applying the dominated convergence theorem. Toward this end, we first notice by \eqref{NormEquivalence} that
\begin{align*}
	&\norm{m_\Om(\phi_1^k) - m_\Om(\phi_1)}_{L^\infty(\Om)}^2\norm{\Grad\mathcal{S}_k^\Om(f,g)}_{L^2(\Om)}^2 + \norm{m_\Ga(\psi_1^k) - m_\Ga(\psi_1)}_{L^\infty(\Ga)}^2\norm{\Gradg\mathcal{S}_k^\Ga(f,g)}_{L^2(\Ga)}^2 \\&\quad\leq C\norm{\mathcal{S}(f,g)}_{L}^2\in L^1(0,T).
\end{align*}
On the other hand, since $m_\Om$ and $m_\Ga$ are Lipschitz continuous on $[-1,1]$, the Sobolev inequality shows that
\begin{align*}
	\norm{m_\Om(\phi_1^k) - m_\Om(\phi_1)}_{L^\infty(\Om)}^2 + \norm{m_\Ga(\psi_1^k) - m_\Ga(\psi_1)}_{L^\infty(\Ga)}^2 &\leq C\norm{\phi_1^k - \phi_1}_{L^\infty(\Om)}^2 + C\norm{\psi_1^k - \psi_1}_{L^\infty(\Ga)}^2 \\
	&\leq C\norm{\phi_1^k - \phi_1}_{H^2(\Om)}^2 + C\norm{\psi_1^k - \psi_1}_{H^2(\Ga)}^2
\end{align*}
almost everywhere on $(0,T)$. Thus, in light of \eqref{Conv:Phi:H^2:k}-\eqref{Conv:Psi:H^2:k}, we obtain that
\begin{align*}
	&\norm{m_\Om(\phi_1^k) - m_\Om(\phi_1)}_{L^\infty(\Om)}^2\norm{\Grad\mathcal{S}_k^\Om(f,g)}_{L^2(\Om)}^2 + \norm{m_\Ga(\psi_1^k) - m_\Ga(\psi_1)}_{L^\infty(\Ga)}^2\norm{\Gradg\mathcal{S}_k^\Ga(f,g)}_{L^2(\Ga)}^2 \\
	&\quad\leq C\norm{(\phi_1^k - \phi_1,\psi_1^k - \psi_1)}_{\mathcal{H}^2}^2\norm{\mathcal{S}(f,g)}_{L}^2 \rightarrow 0
\end{align*}
up to a subsequence $k\rightarrow\infty$ almost everywhere on $(0,T)$. Consequently, we can use \eqref{Est:G1-Gk:Lb} and the Lebesgue convergence theorem to conclude that
\begin{align*}
	\norm{\mathcal{S}_k(f,g) - \mathcal{S}_1(f,g)}_{L} \rightarrow 0 \qquad\text{strongly in~}L^2(0,T)
\end{align*}
up to a subsequence $k\rightarrow\infty$. In particular, for $(f,g) = (\delt\Phi,\delt\Psi)$ and $(f,g) = (\Phi,\Psi)$, we find that
\begin{align}\label{Conv:Gk:delt:PP:L2H1}
	\mathcal{S}_k(\delt\Phi,\delt\Psi)\rightarrow\mathcal{S}_1(\delt\Phi,\delt\Psi) \qquad\text{strongly in~}L^2(0,T;\mathcal{H}^1), 
\end{align}
as well as
\begin{align}\label{Conv:Gk:PP:L2H1}
	\mathcal{S}_k(\Phi,\Psi) \rightarrow\mathcal{S}_1(\Phi,\Psi) \qquad\text{strongly in~} L^2(0,T;\mathcal{H}^1),
\end{align}
as $k\rightarrow\infty$, respectively. Furthermore, in view of the uniform estimates \eqref{Est:Gk:PP:L4H2} and \eqref{Est:Gk:PP:L83W24}, we deduce that
\begin{align}\label{Conv:Gk:PP:L4H2}
	\mathcal{S}_k(\Phi,\Psi) \rightharpoonup\mathcal{S}_1(\Phi,\Psi) \qquad\text{weakly in~} L^4(0,T;\mathcal{H}^2)
\end{align}
and
\begin{align}\label{Conv:Gk:PP:L83W24}
	\mathcal{S}_k(\Phi,\Psi)\rightharpoonup\mathcal{S}_1(\Phi,\Psi) \qquad\text{weakly in~} L^{\frac83}(0,T;\mathcal{W}^{2,4}),
\end{align}
respectively, as $k\rightarrow\infty$.
Next, for any $\varepsilon\in(0,\frac12)$, by interpolation, there exists a constant $C = C(\varepsilon) > 0$ such that
\begin{alignat*}{2}
    \norm{f}_{H^{2-\varepsilon}(\Om)} &\leq C\norm{f}_{H^1(\Om)}^\varepsilon\norm{f}_{H^2(\Om)}^{1-\varepsilon} &&\qquad\text{for all~}f\in H^2(\Om), \\
    \norm{g}_{H^{2-\varepsilon}(\Ga)} &\leq C\norm{g}_{H^1(\Ga)}^\varepsilon\norm{g}_{H^2(\Ga)}^{1-\varepsilon} &&\qquad\text{for all~}g\in H^2(\Ga),
\end{alignat*}
see, for instance, \cite[Lemma~2.2]{Colli2024}.
Thus, combining these two estimates, by Young's inequality, we deduce
\begin{align}\label{Interpol:Appl}
    \begin{split}
        \norm{(f,g)}_{L^{\frac{4}{1+\varepsilon}}(0,T;\mathcal{H}^{2-\varepsilon})}
        \leq C\norm{(f,g)}_{L^2(0,T;\mathcal{H}^1)}\norm{(f,g)}_{L^4(0,T;\mathcal{H}^2)}
    \end{split}
\end{align}
for all $(f,g)\in L^4(0,T;\mathcal{H}^2)$.
Choosing $(f,g) = \mathcal{S}_k(\Phi,\Psi) - \mathcal{S}_1(\Phi,\Psi)\in L^4(0,T;\mathcal{H}^2)$ in \eqref{Interpol:Appl}, the convergences \eqref{Conv:Gk:PP:L2H1} and \eqref{Conv:Gk:PP:L4H2} lead to
\begin{align}\label{Conv:Gk:PP:eps}
    \mathcal{S}_k(\Phi,\Psi) \rightarrow \mathcal{S}_1(\Phi,\Psi) \qquad\text{strongly in~} L^{\frac{4}{1+\varepsilon}}(0,T;\mathcal{H}^{2-\varepsilon})
\end{align}
as $k\rightarrow\infty$. Now, we claim that for any $\sigma\in C_c^\infty(0,T)$ it holds that
\begin{align}\label{ChainRuleFormula:k}
	\begin{split}
		&\int_0^T \big(\mathcal{S}_k(\delt\Phi,\delt\Psi),(\Phi,\Psi)\big)_{\mathcal{L}^2}\sigma\ds \\
		&\quad = -\frac12 \int_0^T \norm{(\Phi,\Psi)}_{L,[\phi_1^k,\psi_1^k],\ast}^2\delt\sigma\ds \\
		&\qquad + \frac12 \int_0^T\big(\delt\phi_1^k,m_\Om(\phi_1^k)\Grad\mathcal{S}_k^\Om(\Phi,\Psi)\cdot\Grad\mathcal{S}_k^\Om(\Phi,\Psi)\big)_{\mathcal{L}^2}\sigma\ds \\
		&\qquad + \frac12\int_0^T \big(\delt\psi_1^k,m_\Ga(\psi_1^k)\Gradg\mathcal{S}_k^\Ga(\Phi,\Psi)\cdot\Gradg\mathcal{S}_k^\Ga(\Phi,\Psi)\big)_{\mathcal{L}^2}\sigma\ds.
	\end{split}
\end{align}
To this end, for almost every $t\in(0,T)$, let $c(\cdot,t)$ be measurable in $\Om$, $d(\cdot,t)$ be measurable on $\Ga$, and $(f(\cdot,t),g(\cdot,t))\in \mathcal{V}_{L}^{-1}\cap\mathcal{L}^2$ such that $(\delt c(\cdot,t),\delt d(\cdot,t)), (\delt f(\cdot,t),\delt g(\cdot,t))\in\mathcal{L}^2$. Then, differentiating the weak formulation \eqref{WF:EBS} that is satisfied by $\mathcal{S}_{L}[c,d](f,g)$ and taking $(\zeta,\xi) = \mathcal{S}_{L}[c,d](f,g)\in\mathcal{H}^1_L$ as a test function, we obtain that
\begin{align*}
	&\intO \delt f\,\mathcal{S}_{L}^\Om[c,d](f,g)\dx + \intG \delt g\,\mathcal{S}_{L}^\Ga[c,d](f,g)\dG \\
	&\quad = \intO m_\Om^\prime(c)\delt c\,\big\vert\Grad\mathcal{S}_{L}^\Om[c,d](f,g)\big\vert^2\dx + \intG m_\Ga^\prime(d)\delt d\,\big\vert\Gradg\mathcal{S}_{L}^\Ga[c,d](f,g)\big\vert^2\dG \\
	&\qquad + \intO f\,\delt\mathcal{S}_{L}^\Om[c,d](f,g)\dx + \intG g\,\delt\mathcal{S}_{L}^\Ga[c,d](f,g)\dG.
\end{align*}
Hence, noting again on \eqref{WF:EBS}, we find that
\begin{align}\label{ChainRuleFormula:General}
	%\begin{split}
		\ddt \norm{(f,g)}_{L,[c,d],\ast}^2 &= \ddt\norm{\mathcal{S}_{L}[c,d](f,g)}_{L,[c,d]}^2 \nonumber \\
        &= \ddt \big(\mathcal{S}_{L}[c,d](f,g),(f,g)\big)_{\mathcal{L}^2} \nonumber \\
        &= \big(\delt\mathcal{S}_{L}[c,d](f,g),(f,g)\big)_{\mathcal{L}^2} + \big(\mathcal{S}_{L}[c,d](f,g),(\delt f,\delt g)\big)_{\mathcal{L}^2} \\
		&= 2\big(\mathcal{S}_{L}[c,d](\delt f,\delt g),(f,g)\big)_{\mathcal{L}^2} - \intO m_\Om^\prime(c)\delt c\,\big\vert\Grad\mathcal{S}_{L}^\Om[c,d](f,g)\big\vert^2\dx \nonumber \\
		&\quad - \intG m_\Ga^\prime(d)\delt d\,\big\vert\Gradg\mathcal{S}_{L}^\Ga[c,d](f,g)\big\vert^2\dG. \nonumber 
	%\end{split}
\end{align}
As $(c,d)$ and $(f,g)$ were arbitrary, we can simply take $(c,d) = (\phi_1^k,\psi_1^k)$ and $(f,g) = (\Phi,\Psi)$ in \eqref{ChainRuleFormula:General} obtaining \eqref{ChainRuleFormula:k}. Now, we wish to take the limit $k\rightarrow\infty$ in \eqref{ChainRuleFormula:k}. To this end, we first notice that from the convergences \eqref{Conv:Gk:delt:PP:L2H1} and \eqref{Conv:Gk:PP:L2H1} we easily conclude that
\begin{align}\label{Conv:CRF:1}
	\lim_{k\rightarrow\infty}\int_0^T \big(\mathcal{S}_k(\delt\Phi,\delt\Psi),(\Phi,\Psi)\big)_{\mathcal{L}^2}\sigma\ds = \int_0^T \big(\mathcal{S}_1(\delt\Phi,\delt\Psi),(\Phi,\Psi)\big)_{\mathcal{L}^2}\sigma\ds
\end{align}
as well as
\begin{align}\label{Conv:CRF:2}
    %\begin{split}
        \lim_{k\rightarrow\infty}\frac12\int_0^T \norm{(\Phi,\Psi)}_{L,[\phi_1^k,\psi_1^k],\ast}^2\delt\sigma\ds &= \lim_{k\rightarrow\infty}\frac12\int_0^T \big(\mathcal{S}_k(\Phi,\Psi),(\Phi,\Psi)\big)_{\mathcal{L}^2}\delt\sigma\ds \nonumber \\
        &= \frac12\int_0^T \big(\mathcal{S}_1(\Phi,\Psi),(\Phi,\Psi)\big)_{\mathcal{L}^2}\delt\sigma\ds \\
        &= \frac12\int_0^T\norm{(\Phi,\Psi)}_{L,[\phi_1,\psi_1],\ast}^2\delt\sigma\ds, \nonumber 
    %\end{split}
\end{align}
respectively. To handle the last two terms on the right-hand side of \eqref{ChainRuleFormula:k}, we show that there exists a constant $C > 0$, independent of $k\in\N$, such that
\begin{align}\label{Claim}
	\int_0^T\norm{\big(m_\Om^\prime(\phi_1^k)\abs{\Grad\mathcal{S}_k^\Om(\Phi,\Psi)}^2,m_\Ga^\prime(\psi_1^k)\abs{\Gradg\mathcal{S}_k^\Ga(\Phi,\Psi)}^2\big)}_{L}^2\ds \leq C.
\end{align}
In fact, by \eqref{Prelim:Est:Inteprol}, we have
\begin{align*}
	&\norm{\Grad\big(m_\Om^\prime(\phi_1^k)\abs{\Grad\mathcal{S}_k^\Om(\Phi,\Psi)}^2\big)}_{L^2(\Om)} \\
	&\quad\leq \norm{m_\Om^{\prime\prime}(\phi_1^k)\Grad\phi_1^k\abs{\Grad\mathcal{S}_k^\Om(\Phi,\Psi)}^2}_{L^2(\Om)} + 2\norm{m_\Om^\prime(\phi_1^k) D^2\mathcal{S}_k^\Om(\Phi,\Psi)\Grad\mathcal{S}_k^\Om(\Phi,\Psi)}_{L^2(\Om)} \\
	&\quad\leq C\norm{\Grad\phi_1^k}_{L^6(\Om)}\norm{\Grad\mathcal{S}_k^\Om(\Phi,\Psi)}_{L^6(\Om)}^2 + C\norm{D^2\mathcal{S}_k^\Om(\Phi,\Psi)}_{L^4(\Om)}\norm{\Grad\mathcal{S}_k^\Om(\Phi,\Psi)}_{L^4(\Om)} \\
	&\quad\leq C\norm{\Grad\phi_1^k}_{L^2(\Om)}^{\frac13}\norm{\phi_1^k}_{H^2(\Om)}^{\frac23}\norm{\Grad\mathcal{S}_k^\Om(\Phi,\Psi)}_{L^2(\Om)}^{\frac23}\norm{\mathcal{S}_k^\Om(\Phi,\Psi)}_{H^2(\Om)}^{\frac43} \\
	&\qquad + C\norm{D^2\mathcal{S}_k^\Om(\Phi,\Psi)}_{L^4(\Om)}\norm{\Grad\mathcal{S}_k^\Om(\Phi,\Psi)}_{L^2(\Om)}^{\frac12}\norm{\Grad\mathcal{S}_k^\Om(\Phi,\Psi)}_{H^2(\Om)}^{\frac12}.
\end{align*}
Hence, applying the estimates \eqref{Est:PP:k:LinftyH^1} and \eqref{Est:G:k:LinftyH^1}, we find
\begin{align}\label{Est:Claim:1}
	\begin{split}
		&\norm{\Grad\big(m_\Om^\prime(\phi_1^k)\abs{\Grad\mathcal{S}_k^\Om(\Phi,\Psi)}^2\big)}_{L^2(\Om)} \\
		&\quad\leq C\norm{\phi_1^k}_{H^2(\Om)}^{\frac23}\norm{\mathcal{S}_k^\Om(\Phi,\Psi)}_{H^2(\Om)}^{\frac43} + C\norm{D^2\mathcal{S}_k^\Om(\Phi,\Psi)}_{L^4(\Om)}\norm{\mathcal{S}_k^\Om(\Phi,\Psi)}_{H^2(\Om)}^{\frac12} \\
		&\quad\leq C\norm{\phi_1^k}_{H^2(\Om)}^2 + C\norm{\mathcal{S}_k^\Om(\Phi,\Psi)}_{H^2(\Om)}^2 + C\norm{\mathcal{S}_k^\Om(\Phi,\Psi)}_{W^{2,4}(\Om)}^{\frac43}.
	\end{split}
\end{align}
In a similar manner, we derive the estimate
\begin{align}\label{Est:Claim:2}
	\begin{split}
		&\norm{\Gradg\big(m_\Ga(\psi_1^k)\abs{\Gradg\mathcal{S}_k^\Ga(\Phi,\Psi)}^2\big)}_{L^2(\Ga)} \\
		&\quad\leq C\norm{\psi_1^k}_{H^2(\Ga)}^2 + C\norm{\mathcal{S}_k^\Ga(\Phi,\Psi)}_{H^2(\Ga)}^2 + C\norm{\mathcal{S}_k^\Ga(\Phi,\Psi)}_{W^{2,4}(\Ga)}^{\frac43}.
	\end{split}
\end{align}
Lastly, we use the trace theorem and the Sobolev inequality to obtain
\begin{align}\label{Est:Claim:3}
	\begin{split}
		&\norm{\beta m_\Ga^\prime(\psi_1^k)\abs{\Gradg\mathcal{S}_k^\Ga(\Phi,\Psi)}^2 - m_\Om^\prime(\phi_1^k)\abs{\Grad\mathcal{S}_k^\Om(\Phi,\Psi)}^2}_{L^2(\Ga)} \\
		&\quad\leq C\norm{\Grad\mathcal{S}_k^\Om(\Phi,\Psi)}_{L^4(\Ga)}^2 + C\norm{\Gradg\mathcal{S}_k^\Ga(\Phi,\Psi)}_{L^4(\Ga)}^2 \\
		&\quad\leq C\norm{\mathcal{S}_k(\Phi,\Psi)}_{\mathcal{H}^2}^2.
	\end{split}
\end{align}
Thus, in conclusion, combining the estimates \eqref{Est:Claim:1}-\eqref{Est:Claim:3} yields
\begin{align}\label{Est:Claim:Final}
	\begin{split}
		&\norm{\big(m_\Om^\prime(\phi_1^k)\abs{\Grad\mathcal{S}_k^\Om(\Phi,\Psi)}^2,m_\Ga^\prime(\psi_1^k)\abs{\Gradg\mathcal{S}_k^\Ga(\Phi,\Psi)}^2\big)}_{L} \\
		&\quad\leq C\norm{(\phi_1^k,\psi_1^k)}_{\mathcal{H}^2}^2 + C\norm{\mathcal{S}_k(\Phi,\Psi)}_{\mathcal{H}^2}^2 + C\norm{\mathcal{S}_k(\Phi,\Psi)}_{\mathcal{W}^{2,4}}^{\frac43},
	\end{split}
\end{align}
and the desired claim \eqref{Claim} readily follows in light of the estimates \eqref{Est:PP:k:L^4H^2}, \eqref{Est:Gk:PP:L4H2} and \eqref{Est:Gk:PP:L83W24}. We then compute
\begin{align*}
	&\Big(\big(\delt\phi_1^k,\delt\psi_1^k\big),\big(m_\Om^\prime(\phi_1^k)\abs{\Grad\mathcal{S}_k^\Om(\Phi,\Psi)}^2,m_\Ga^\prime(\psi_1^k)\abs{\Gradg\mathcal{S}_k^\Ga(\Phi,\Psi)}^2\big)\Big)_{\mathcal{L}^2}\\
	&\quad =  \Big(\mathcal{S}(\delt\phi_1^k,\delt\psi_1^k),\big(m_\Om^\prime(\phi_1^k)\abs{\Grad\mathcal{S}_k^\Om(\Phi,\Psi)}^2,m_\Ga^\prime(\psi_1^k)\abs{\Gradg\mathcal{S}_k^\Ga(\Phi,\Psi)}^2\big)\Big)_{L} \\
	&\qquad +  \Big(\mathcal{S}(\delt\phi_1^k-\delt\phi_1,\delt\psi_1^k-\delt\psi_1),\big(m_\Om^\prime(\phi_1^k)\abs{\Grad\mathcal{S}_k^\Om(\Phi,\Psi)}^2,m_\Ga^\prime(\psi_1^k)\abs{\Gradg\mathcal{S}_k^\Ga(\Phi,\Psi)}^2\big)\Big)_{L}
\end{align*}
almost everywhere on $(0,T)$. Thus, multiplying the above identity by $\sigma\in C_c^\infty(0,T)$ and integrating in time over $(0,T)$, we take the limit $k\rightarrow\infty$ and obtain using \eqref{Conv:Phi:H^2:k}-\eqref{Conv:Psi:H^2:k} and \eqref{Claim} that
\begin{align}\label{IDK}
	\begin{split}
		&\lim_{k\rightarrow\infty}\int_0^T \Big(\big(\delt\phi_1^k,\delt\psi_1^k\big),\big(m_\Om^\prime(\phi_1^k)\abs{\Grad\mathcal{S}_k^\Om(\Phi,\Psi)}^2,m_\Ga^\prime(\psi_1^k)\abs{\Gradg\mathcal{S}_k^\Ga(\Phi,\Psi)}^2\big)\Big)_{\mathcal{L}^2}\sigma\ds \\
		&\quad = \lim_{k\rightarrow\infty}\int_0^T \Big(\mathcal{S}(\delt\phi_1^k,\delt\psi_1^k),\big(m_\Om^\prime(\phi_1^k)\abs{\Grad\mathcal{S}_k^\Om(\Phi,\Psi)}^2,m_\Ga^\prime(\psi_1^k)\abs{\Gradg\mathcal{S}_k^\Ga(\Phi,\Psi)}^2\big)\Big)_{L}\sigma\ds.
	\end{split}
\end{align}
To compute the limit of the right-hand side of \eqref{IDK}, we first note that by our above computations, specifically \eqref{Est:Claim:1}-\eqref{Est:Claim:3}, there exist functions $h_1,\ldots,h_5$ such that, along a non-relabeled subsequence $k\rightarrow\infty$,
\begin{alignat}{2}
	m_\Om^{\prime\prime}(\phi_1^k)\abs{\Grad\mathcal{S}_k^\Om(\Phi,\Psi)}^2 &\rightharpoonup h_1 &&\qquad\text{weakly in~} L^2(0,T;L^2(\Om)), \label{Conv:h1} \\
	2m_\Om^\prime(\phi_1^k)D^2\mathcal{S}_k^\Om(\Phi,\Psi)\Grad\mathcal{S}_k^\Om(\Phi,\Psi) &\rightharpoonup h_2 &&\qquad\text{weakly in~} L^2(0,T;L^2(\Om)), \label{Conv:h2} \\
	m_\Ga^{\prime\prime}(\psi_1^k)\abs{\Gradg\mathcal{S}_k^\Ga(\Phi,\Psi)}^2 &\rightharpoonup h_3 &&\qquad\text{weakly in~} L^2(0,T;L^2(\Ga)), \label{Conv:h3} \\
	2m_\Ga^\prime(\psi_1^k) D^2_\Ga\mathcal{S}_k^\Ga(\Phi,\Psi)\Gradg\mathcal{S}_k^\Ga(\Phi,\Psi) &\rightharpoonup h_4 &&\qquad\text{weakly in~} L^2(0,T;L^2(\Ga)), \label{Conv:h4} \\
	\beta m_\Ga^\prime(\psi_1^k)\abs{\Gradg\mathcal{S}_k^\Ga(\Phi,\Psi)}^2 - m_\Om^\prime(\phi_1^k)\abs{\Grad\mathcal{S}_k^\Om(\Phi,\Psi)}^2 &\rightharpoonup h_5 &&\qquad\text{weakly in~} L^2(0,T;L^2(\Ga)). \label{Conv:h5}
\end{alignat}
As a last step, we are left with identifying the weak limits $h_1,\ldots,h_5$. First, using \eqref{Conv:Gk:PP:eps} with $\varepsilon = \frac14$, we find that
\begin{align*}
    \mathcal{S}_k(\Phi,\Psi) \rightarrow \mathcal{S}_1(\Phi,\Psi) \qquad\text{strongly in~} L^{\frac{16}{5}}(0,T;\mathcal{H}^{\frac74})
\end{align*}
as $k\rightarrow\infty$. In particular, this implies that
\begin{align*}
    \big(\big\vert\Grad\mathcal{S}_k^\Om(\Phi,\Psi)\big\vert^2,\big\vert\Gradg\mathcal{S}_k^\Ga(\Phi,\Psi)\big\vert^2\big) \rightarrow \big(\big\vert\Grad\mathcal{S}_1^\Om(\Phi,\Psi)\big\vert^2,\big\vert\Gradg\mathcal{S}_1^\Ga(\Phi,\Psi)\big\vert^2\big) \qquad\text{strongly in~} L^{\frac{16}{5}}(0,T;\mathcal{L}^3).
\end{align*}
Next, due to \eqref{Conv:Phi:H^2:k}-\eqref{Conv:Psi:H^2:k}, we have
\begin{align*}
    (\Grad\phi_1^k,\Gradg\psi_1^k) \rightarrow (\Grad\phi_1,\Gradg\psi_1) \qquad\text{strongly in~} L^4(0,T;\mathcal{L}^3).
\end{align*}
Finally, since $m_\Om^{\prime\prime},m_\Ga^{\prime\prime}\in C([-1,1])$, \eqref{Conv:Phi:a.e.} implies that
\begin{align*}
	(m_\Om^{\prime\prime}(\phi_1^k),m_\Ga^{\prime\prime}(\psi_1^k)) \rightarrow (m_\Om^{\prime\prime}(\phi_1),m_\Ga^{\prime\prime}(\psi_1)) \qquad\text{strongly in~} L^8(0,T;\mathcal{L}^{12}).
\end{align*}
Thus, we readily infer that
\begin{alignat*}{2}
	m_\Om^{\prime\prime}(\phi_1^k)\Grad\phi_1^k\abs{\Grad\mathcal{S}_k^\Om(\Phi,\Psi)}^2 &\rightharpoonup m_\Om^{\prime\prime}(\phi_1)\Grad\phi_1\abs{\Grad\mathcal{S}_1^\Om(\Phi,\Psi)}^2 &&\qquad\text{weakly in~} L^1(0,T;L^1(\Om)), \\
    m_\Ga^{\prime\prime}(\psi_1^k)\Gradg\psi_1^k\abs{\Gradg\mathcal{S}_k^\Ga(\Phi,\Psi)}^2 &\rightharpoonup m_\Ga^{\prime\prime}(\psi_1)\Gradg\psi_1\abs{\Gradg\mathcal{S}_1^\Ga(\Phi,\Psi)} ^2&&\qquad\text{weakly in~} L^1(0,T;L^1(\Ga)), 
\end{alignat*}
from which we deduce that
\begin{align*}
	h_1 = m_\Om^{\prime\prime}(\phi_1)\Grad\phi_1\abs{\Grad\mathcal{S}_1^\Om(\Phi,\Psi)}^2,
\end{align*}
as well as
\begin{align*}
    h_3 = m_\Ga^{\prime\prime}(\psi_1)\Gradg\psi_1\abs{\Gradg\mathcal{S}_1^\Ga(\Phi,\Psi)}^2.
\end{align*}
Similarly, owing to \eqref{Conv:Gk:PP:L2H1} and \eqref{Conv:Gk:PP:L83W24}, and noting on
\begin{align*}
	(m_\Om^\prime(\phi_1^k),m_\Ga^\prime(\psi_1^k)) \rightarrow (m_\Om^\prime(\phi_1),m_\Ga^\prime(\psi_1)) \qquad\text{strongly in~} L^8(0,T;\mathcal{L}^4),
\end{align*}
we obtain
\begin{align*}
	&2m_\Om^\prime(\phi_1^k)D^2\mathcal{S}_k^\Om(\Phi,\Psi)\Grad\mathcal{S}_k^\Om(\Phi,\Psi) &&\\
	&\qquad \rightharpoonup 2m_\Om^\prime(\phi_1)D^2\mathcal{S}_1^\Om(\Phi,\Psi)\Grad\mathcal{S}_1^\Om(\Phi,\Psi) &&\qquad\text{weakly in~} L^1(0,T;L^1(\Om)), \\
    &2m_\Ga^\prime(\psi_1^k)D_\Ga^2\mathcal{S}_k^\Ga(\Phi,\Psi)\Gradg\mathcal{S}_k^\Ga(\Phi,\Psi) && \\
    &\qquad \rightharpoonup 2m_\Ga^\prime(\psi_1)D_\Ga^2\mathcal{S}_1^\Ga(\Phi,\Psi)\Gradg\mathcal{S}_1^\Ga(\Phi,\Psi) &&\qquad\text{weakly in~} L^1(0,T;L^1(\Ga)).
\end{align*}
This entails that
\begin{align*}
	h_2 = 2m_\Om^\prime(\phi_1)D^2\mathcal{S}_1^\Om(\Phi,\Psi)\Grad\mathcal{S}_1^\Om(\Phi,\Psi)
\end{align*}
and
\begin{align*}
    h_4 = 2m_\Ga^\prime(\psi_1)D_\Ga^2\mathcal{S}_1^\Ga(\Phi,\Psi)\Gradg\mathcal{S}_1^\Ga(\Phi,\Psi).
\end{align*}
Lastly, to identify $h_5$ we note that given the aforementioned convergences and the trace theorem, we find that
\begin{align*}
    \abs{\Grad\mathcal{S}_k^\Om(\Phi,\Psi)\vert_\Ga}^2 &\rightharpoonup \abs{\Grad\mathcal{S}_1^\Om(\Phi,\Psi)\vert_\Ga}^2 &&\text{strongly in~} L^{\frac{16}{3}}(0,T;L^{\frac32}(\Ga)),\\
    \abs{\Gradg\mathcal{S}_k^\Ga(\Phi,\Psi)}^2 &\rightharpoonup \abs{\Gradg\mathcal{S}_1^\Ga(\Phi,\Psi)}^2  &&\text{strongly in~} L^{\frac{16}{3}}(0,T;L^{\frac32}(\Ga))
\end{align*}
as $k\rightarrow\infty$, and thus, readily deduce that
\begin{align*}
	&\beta m_\Ga^\prime(\psi_1^k)\abs{\Gradg\mathcal{S}_k^\Ga(\Phi,\psi)}^2 - m_\Om^\prime(\phi_1^k)\abs{\Grad\mathcal{S}_k^\Om(\Phi,\Psi)}^2 \\
	&\qquad\rightharpoonup \beta m_\Ga^\prime(\psi_1)\abs{\Gradg\mathcal{S}_1^\Ga(\Phi,\Psi)}^2 - m_\Om^\prime(\phi_1)\abs{\Grad\mathcal{S}_1^\Om(\Phi,\Psi)}^2 \qquad\text{weakly in~} L^1(0,T;L^1(\Ga)).
\end{align*}
This shows 
\begin{align*}
	h_5 = \beta m_\Ga^\prime(\psi_1)\abs{\Gradg\mathcal{S}_1^\Ga(\Phi,\Psi)}^2 - m_\Om^\prime(\phi_1)\abs{\Grad\mathcal{S}_1^\Om(\Phi,\Psi)}^2.
\end{align*}
Therefore, by exploiting \eqref{Conv:h1}-\eqref{Conv:h5} in \eqref{IDK}, we finally deduce that
\begin{align*}
	&\int_0^T \big(\mathcal{S}_1(\delt\Phi,\delt\Psi)(\Phi,\Psi)\big)_{\mathcal{L}^2}\sigma\ds \\
	&\quad = \frac12 \int_0^T \norm{(\Phi,\Psi)}_{L,[\phi_1,\psi_1],\ast}\delt\sigma\ds \\
	&\qquad + \frac12 \int_0^T \Big(\mathcal{S}(\delt\phi_1,\delt\psi_1),\big(m_\Om^\prime(\phi_1)\abs{\Grad\mathcal{S}_1^\Om(\Phi,\Psi)}^2,m_\Ga^\prime(\psi_1)\abs{\Gradg\mathcal{S}_1^\Ga(\Phi,\Psi)}^2\big)\Big)_{L}\sigma\ds
\end{align*}
for any $\sigma\in C_c^\infty(0,T)$. The latter readily implies the desired conclusion \eqref{ChainRuleFormula}.

\textbf{Estimates of the nonlinear terms.}
In the rest of the proof, the letter $C$ denotes a generic positive constant that may change its value from line to line, and which depends on the parameter of the system and the initial energy $E(\phi_0,\psi_0)$.
We now intend to bound the terms $I_1$ and $I_2$. 

Regarding $I_1$, we have
\begin{align*}
	\abs{I_1} &= \big\vert\big(\mathcal{S}_{L}(\delt\phi_1,\delt\psi_1)),(m_\Om^\prime(\phi_1)\abs{\Grad\mathcal{S}_1^\Om(\Phi,\Psi)}^2,m_\Ga^\prime(\psi_1)\abs{\Gradg\mathcal{S}_1^\Ga(\Phi,\Psi)}^2)\big)_L\big\vert \\
	&\leq C\norm{(\delt\phi_1,\delt\psi_1)}_{(\mathcal{H}^1_L)^\prime}\norm{(m_\Om^\prime(\phi_1)\abs{\Grad\mathcal{S}_1^\Om(\Phi,\Psi)}^2,m_\Ga^\prime(\psi_1)\abs{\Gradg\mathcal{S}_1^\Ga(\Phi,\Psi)}^2)}_{L}.
\end{align*}
For the second term on the right-hand side, we note that
\begin{align*}
	&\norm{(m_\Om^\prime(\phi_1)\abs{\Grad\mathcal{S}_1^\Om(\Phi,\Psi)}^2,m_\Ga^\prime(\psi_1)\abs{\Gradg\mathcal{S}_1^\Ga(\Phi,\Psi)}^2)}_{L} \\
	&\quad \leq\norm{\big(m_\Om^{\prime\prime}(\phi_1)\Grad\phi_1\abs{\Grad\mathcal{S}_1^\Om(\Phi,\Psi)}^2,m_\Ga^{\prime\prime}(\psi_1)\Gradg\psi_1\abs{\Gradg\mathcal{S}_1^\Ga(\Phi,\Psi)}^2\big)}_{\mathcal{L}^2} \\
	&\qquad + \norm{\big(m_\Om^\prime(\phi_1)D^2\mathcal{S}_1^\Om(\Phi,\Psi)\Grad\mathcal{S}_1^\Om(\Phi,\psi),m_\Ga^\prime(\psi_1)D^2_\Ga\mathcal{S}_1^\Ga(\Phi,\Psi)\Gradg\mathcal{S}_1^\Ga(\Phi,\Psi)\big)}_{\mathcal{L}^2} \\
	&\qquad + \chi(L)\norm{\beta m_\Ga^\prime(\psi_1)\abs{\Gradg\mathcal{S}_1^\Ga(\Phi,\Psi)}^2 - m_\Om^\prime(\phi_1)\abs{\Grad\mathcal{S}_1^\Om(\Phi,\Psi)}^2}_{L^2(\Ga)},
\end{align*}
which entails
\begin{align*}
	\abs{I_1} &\leq C\norm{(\delt\phi_1,\delt\psi_1)}_{(\mathcal{H}^1_L)^\prime}\norm{\big(\Grad\phi_1\abs{\Grad\mathcal{S}_1^\Om(\Phi,\Psi)}^2,\Gradg\psi_1\abs{\Gradg\mathcal{S}_1^\Ga(\Phi,\Psi)}^2\big)}_{\mathcal{L}^2} \\
	&\quad + C\norm{(\delt\phi_1,\delt\psi_1)}_{(\mathcal{H}^1_L)^\prime}\norm{\big(D^2\mathcal{S}_1^\Om(\Phi,\Psi)\Grad\mathcal{S}_1^\Om(\Phi,\Psi),D^2_\Ga\mathcal{S}_1^\Ga(\Phi,\Psi)\Gradg\mathcal{S}_1^\Ga(\Phi,\Psi)\big)}_{\mathcal{L}^2} \\
	&\quad + C\norm{(\delt\phi_1,\delt\psi_1)}_{(\mathcal{H}^1_L)^\prime}\chi(L)\norm{\beta m_\Ga^\prime(\psi_1)\abs{\Gradg\mathcal{S}_1^\Ga(\Phi,\Psi)}^2 - m_\Om^\prime(\phi_1)\abs{\Grad\mathcal{S}_1^\Om(\Phi,\Psi)}^2}_{L^2(\Ga)} \\
	&\eqqcolon J_1 + J_2 + J_3.
\end{align*}
Utilizing \eqref{Est:Sol:G:H^2} and owing to \eqref{Est:fg:L^2:SolOp:2} in the case $K\in(0,\infty)$, we see that
\begin{align}\label{Est:S_1:H^2:PhiPsi}
    \begin{split}
        &\norm{\mathcal{S}_1(\Phi,\Psi)}_{\mathcal{H}^2} \\
        &\quad\leq C\Big(\norm{(\Grad\phi_1,\Gradg\psi_1)}_{\mathcal{L}^2}\norm{(\phi_1,\psi_1)}_{\mathcal{H}^2}\norm{\mathcal{S}_1(\Phi,\Psi)}_{L} + \norm{\mathcal{S}_1(\Phi,\Psi)}_{L}^{\frac12}\norm{(\Phi,\Psi)}_{K}^{\frac12}\Big).
    \end{split}
\end{align}
Then, concerning $J_1$, we employ the estimates \eqref{Prelim:Est:Inteprol}, \eqref{Est:PP:H1:Linfty} and \eqref{Est:S_1:H^2:PhiPsi}, and find
\begin{align}\label{Est:J1}
    J_1 &\leq C\norm{(\delt\phi_1,\delt\psi_1)}_{(\mathcal{H}^1_L)^\prime}\norm{(\Grad\phi_1,\Gradg\psi_1)}_{\mathcal{L}^6}\norm{\big(\Grad\mathcal{S}_1^\Om(\Phi,\Psi),\Gradg\mathcal{S}_1^\Ga(\Phi,\Psi)\big)}_{\mathcal{L}^6}^2 \nonumber \\
    &\leq C\norm{(\delt\phi_1,\delt\psi_1)}_{(\mathcal{H}^1_L)^\prime}\norm{(\Grad\phi_1,\Gradg\psi_1)}_{\mathcal{L}^2}^{\frac13}\norm{(\phi_1,\psi_1)}_{\mathcal{H}^2}^{\frac23}\norm{\mathcal{S}_1(\Phi,\Psi)}_{L}^{\frac23}\norm{\mathcal{S}_1(\Phi,\Psi)}_{\mathcal{H}^2}^{\frac43} \nonumber \\
    &\leq C\norm{(\delt\phi_1,\delt\psi_1)}_{(\mathcal{H}^1_L)^\prime}\norm{(\phi_1,\psi_1)}_{\mathcal{H}^2}^{\frac23}\norm{\mathcal{S}_1(\Phi,\Psi)}_{L}^{\frac23} \nonumber \\
    &\quad\times\Big(\norm{(\phi_1,\psi_1)}_{\mathcal{H}^2}\norm{\mathcal{S}_1(\Phi,\Psi)}_{L} + \norm{\mathcal{S}_1(\Phi,\Psi)}_{L}^{\frac12}\norm{(\Phi,\Psi)}_{L}^{\frac12}\Big)^{\frac43} \\
    &\leq C\norm{(\delt\phi_1,\delt\psi_1)}_{(\mathcal{H}^1_L)^\prime}\Big(\norm{(\phi_1,\psi_1)}_{\mathcal{H}^2}^2\norm{\mathcal{S}_1(\Phi,\Psi)}_{L}^2 + C\norm{(\phi_1,\psi_1)}_{\mathcal{H}^2}^{\frac23}\norm{\mathcal{S}_1(\Phi,\Psi)}_{L}^{\frac43}\norm{(\Phi,\Psi)}_{K}^{\frac23}\Big) \nonumber \\
    &\leq \frac{1}{18}\norm{(\Phi,\Psi)}_{K}^2 + C\Big(\norm{(\delt\phi_1,\delt\psi_1)}_{(\mathcal{H}^1_L)^\prime}\norm{(\phi_1,\psi_1)}_{\mathcal{H}^2}^2 + \norm{(\delt\phi_1,\delt\psi_1)}_{(\mathcal{H}^1_L)^\prime}\norm{(\phi_1,\psi_1)}_{\mathcal{H}^2}\Big) \nonumber \\
    &\quad\times\norm{\mathcal{S}_1(\Phi,\Psi)}_{L}^2 \nonumber \\
    &\leq \frac{1}{18}\norm{(\Phi,\Psi)}_{K}^2 + C\Big(\norm{(\delt\phi_1,\delt\psi_1)}_{(\mathcal{H}^1_L)^\prime}^2 + \norm{(\phi_1,\psi_1)}_{\mathcal{H}^2}^4\Big)\norm{\mathcal{S}_1(\Phi,\Psi)}_{L}^2. \nonumber 
\end{align}
If however $K = \infty$, a similar argumentation owing to \eqref{Est:fg:L^2:SolOp:2:K=infty} leads to the estimate
\begin{align*}
    J_1 \leq \frac{1}{18}\norm{(\Phi,\Psi)}_{K}^2 + C\Big(1 + \norm{(\delt\phi_1,\delt\psi_1)}_{(\mathcal{H}^1_L)^\prime}^2 + \norm{(\phi_1,\psi_1)}_{\mathcal{H}^2}^4\Big)\norm{\mathcal{S}_1(\Phi,\Psi)}_{L}^2.
\end{align*}
In the following, we restrict ourselves to the case $K\in(0,\infty)$, as the case $K = \infty$ can be handled similarly as above.
Next, we consider $J_2$. Due to \eqref{Prelim:Est:Inteprol}, \eqref{Est:PP:H1:Linfty} and \eqref{Est:S_1:H^2:PhiPsi} we have
\begin{align}\label{Est:S_1:Grad:L4:PhiPsi}
    \begin{split}
    	&\norm{(\Grad\mathcal{S}_1^\Om(\Phi,\Psi),\Gradg\mathcal{S}_1^\Ga(\Phi,\Psi))}_{\mathcal{L}^4} \\
    	&\quad\leq C\norm{\mathcal{S}_1(\Phi,\Psi)}_{L}^{\frac12}\norm{\mathcal{S}_1(\Phi,\Psi)}_{\mathcal{H}^2}^{\frac12} \\
    	&\quad\leq C\Big(\norm{(\Grad\phi_1,\Gradg\psi_1)}_{\mathcal{L}^2}^{\frac12}\norm{(\phi_1,\psi_1)}_{\mathcal{H}^2}^{\frac12}\norm{\mathcal{S}_1(\Phi,\Psi)}_{L} + \norm{\mathcal{S}_1(\Phi,\Psi)}_{L}^{\frac34}\norm{(\Phi,\Psi)}_{K}^{\frac14}\Big) \\
        &\quad\leq C\Big(\norm{(\phi_1,\psi_1)}_{\mathcal{H}^2}^{\frac12}\norm{\mathcal{S}_1(\Phi,\Psi)}_{L} + \norm{\mathcal{S}_1(\Phi,\Psi)}_{L}^{\frac34}\norm{(\Phi,\Psi)}_{K}^{\frac34}\Big)
    \end{split}
\end{align}
On the other hand, exploiting
\begin{align}\label{Est:PhiPsi:L4:Interpol}
    \begin{split}
    	\norm{(\Phi,\Psi)}_{\mathcal{L}^4} &\leq C\norm{(\Phi,\Psi)}_{\mathcal{L}^2}^{\frac12}\norm{(\Phi,\Psi)}_{\mathcal{H}^1}^{\frac12} \leq C\norm{\mathcal{S}_1(\Phi,\Psi)}_{L}^{\frac14}\norm{(\Phi,\Psi)}_{K}^{\frac14}\norm{(\Phi,\Psi)}_{\mathcal{H}^1}^{\frac12} \\
    	&\leq C\norm{\mathcal{S}_1(\Phi,\Psi)}_{L}^{\frac14}\norm{(\Phi,\Psi)}_{K}^{\frac34}, 
    \end{split}
\end{align}
which follows from Lemma~\ref{Prelim:Poincare} (note that $\mean{\Phi}{\Psi} = 0$), \eqref{Prelim:Est:Inteprol} and \eqref{Est:fg:L^2:SolOp:2}, we obtain similarly from \eqref{Est:Sol:G:W^24}, using again \eqref{Est:PP:H1:Linfty} as well as \eqref{Est:S_1:H^2:PhiPsi}, that
\begin{align}\label{Est:S_1:W24:PhiPsi}
    %\begin{split}
    	\norm{\mathcal{S}_1(\Phi,\Psi)}_{\mathcal{W}^{2,4}} &\leq C\Big(\norm{(\phi_1,\psi_1)}_{\mathcal{H}^2}\norm{\mathcal{S}_1(\Phi,\Psi)}_{L}^{\frac14}\norm{\mathcal{S}_1(\Phi,\Psi)}_{\mathcal{H}^2}^{\frac34} + \norm{\mathcal{S}_1(\Phi,\Psi)}_{L}^{\frac14}\norm{(\Phi,\Psi)}_{K}^{\frac34}\Big) \nonumber \\
    	&\leq C\Big(\norm{(\phi_1,\psi_1)}_{\mathcal{H}^2}^{\frac32}\norm{\mathcal{S}_1(\Phi,\Psi)}_{L} + \norm{(\phi_1,\psi_1)}_{\mathcal{H}^2}^{\frac34}\norm{\mathcal{S}_1(\Phi,\Psi)}_{L}^{\frac58}\norm{(\Phi,\Psi)}_{L}^{\frac38} \\
    	&\qquad + \norm{\mathcal{S}_1(\Phi,\Psi)}_{L}^{\frac14}\norm{(\Phi,\Psi)}_{K}^{\frac34}\Big). \nonumber 
    %\end{split}
\end{align}
Consequently, noticing that
\begin{align*}
	J_2 &\leq \norm{(\delt\phi_1,\delt\psi_1)}_{(\mathcal{H}^1_L)^\prime}\norm{(D^2\mathcal{S}_1^\Om(\Phi,\Psi),D^2_\Ga\mathcal{S}_1^\Ga(\Phi,\Psi))}_{\mathcal{L}^4}\\
    &\quad\times\norm{(\Grad\mathcal{S}_1^\Om(\Phi,\Psi),\Gradg\mathcal{S}_1^\Ga(\Phi,\Psi))}_{\mathcal{L}^4},
\end{align*}
we deduce from \eqref{Est:S_1:Grad:L4:PhiPsi} and \eqref{Est:S_1:W24:PhiPsi} that
\begin{align*}
	J_2 &\leq C\norm{(\delt\phi_1,\delt\psi_1)}_{(\mathcal{H}^1_L)^\prime}\norm{(\phi_1,\psi_1)}_{\mathcal{H}^2}^2\norm{\mathcal{S}_1(\Phi,\Psi)}_{L}^2 \\
	&\qquad + C\norm{(\delt\phi_1,\delt\psi_1)}_{(\mathcal{H}^1_L)^\prime}\norm{(\phi_1,\psi_1)}_{\mathcal{H}^2}^{\frac32}\norm{\mathcal{S}_1(\Phi,\Psi)}_{L}^{\frac74}\norm{(\Phi,\Psi)}_{K}^{\frac14} \\
	&\qquad + C\norm{(\delt\phi_1,\delt\psi_1)}_{(\mathcal{H}^1_L)^\prime}\norm{(\phi_1,\psi_1)}_{\mathcal{H}^2}^{\frac54}\norm{\mathcal{S}_1(\Phi,\Psi)}_{L}^{\frac{13}{8}}\norm{(\Phi,\Psi)}_{K}^{\frac38} \\
	&\qquad + C\norm{(\delt\phi_1,\delt\psi_1)}_{(\mathcal{H}^1_L)^\prime}\norm{(\phi_1,\psi_1)}_{\mathcal{H}^2}^{\frac34}\norm{\mathcal{S}_1(\Phi,\Psi)}_{L}^{\frac{11}{8}}\norm{(\Phi,\Psi)}_{K}^{\frac58} \\
	&\qquad + C\norm{(\delt\phi_1,\delt\psi_1)}_{(\mathcal{H}^1_L)^\prime}\norm{(\phi_1,\psi_1)}_{\mathcal{H}^2}^{\frac12}\norm{\mathcal{S}_1(\Phi,\Psi)}_{L}^{\frac54}\norm{(\Phi,\Psi)}_{K}^{\frac34} \\
	&\qquad + C\norm{(\delt\phi_1,\delt\psi_1)}_{(\mathcal{H}^1_L)^\prime}\norm{\mathcal{S}_1(\Phi,\Psi)}_{L}\norm{(\Phi,\Psi)}_{K} \\
	&\quad \eqqcolon K_1 + \ldots K_6.
\end{align*}
Next, we control the terms $K_1,\ldots,K_6$ separately. Applying Young's inequality, we readily find that
\begin{align*}
	K_1 &\leq C\Big(\norm{(\delt\phi_1,\delt\psi_1)}_{(\mathcal{H}^1_L)^\prime}^2 + \norm{(\phi_1,\psi_1)}_{\mathcal{H}^2}^4\Big)\norm{\mathcal{S}_1(\Phi,\Psi)}_{L}^2, \\
	K_2 &\leq \frac{1}{18}\norm{(\Phi,\Psi)}_{K}^2 + C\norm{(\delt\phi_1,\delt\psi_1)}_{(\mathcal{H}^1_L)^\prime}^{\frac87}\norm{(\phi_1,\psi_1)}_{\mathcal{H}^2}^{\frac{12}{7}}\norm{\mathcal{S}_1(\Phi,\Psi)}_{L}^2 \\
	&\leq \frac{1}{18}\norm{(\Phi,\Psi)}_{K}^2 + C\Big(\norm{(\delt\phi_1,\delt\psi_1)}_{(\mathcal{H}^1_L)^\prime}^2 + \norm{(\phi_1,\psi_1)}_{\mathcal{H}^2}^4\Big)\norm{\mathcal{S}_1(\Phi,\Psi)}_{L}^2, \\
	K_3 &\leq \frac{1}{18}\norm{(\Phi,\Psi)}_{K}^2 + C\norm{(\delt\phi_1,\delt\psi_1)}_{(\mathcal{H}^1_L)^\prime}^{\frac{16}{13}}\norm{(\phi_1,\psi_1)}_{\mathcal{H}^2}^{\frac{20}{13}}\norm{\mathcal{S}_1(\Phi,\Psi)}_{L}^2 \\
	&\leq \frac{1}{18}\norm{(\Phi,\Psi)}_{K}^2 + C\Big(\norm{(\delt\phi_1,\delt\psi_1)}_{(\mathcal{H}^1_L)^\prime}^2 + \norm{(\phi_1,\psi_1)}_{\mathcal{H}^2}^4\Big)\norm{\mathcal{S}_1(\Phi,\Psi)}_{L}^2, \\
	K_4 &\leq \frac{1}{18}\norm{(\Phi,\Psi)}_{K}^2 + C\norm{(\delt\phi_1,\delt\psi_1)}_{(\mathcal{H}^1_L)^\prime}^{\frac{16}{11}}\norm{(\phi_1,\psi_1)}_{\mathcal{H}^2}^{\frac{12}{11}}\norm{\mathcal{S}_1(\Phi,\Psi)}_{L}^2 \\
	&\leq \frac{1}{18}\norm{(\Phi,\Psi)}_{K}^2 + C\Big(\norm{(\delt\phi_1,\delt\psi_1)}_{(\mathcal{H}^1_L)^\prime}^2 + \norm{(\phi_1,\psi_1)}_{\mathcal{H}^2}^4\Big)\norm{\mathcal{S}_1(\Phi,\Psi)}_{L}^2, \\
	K_5 &\leq \frac{1}{18}\norm{(\Phi,\Psi)}_{K}^2 + C\norm{(\delt\phi_1,\delt\psi_1)}_{(\mathcal{H}^1_L)^\prime}^{\frac85}\norm{(\phi_1,\psi_1)}_{\mathcal{H}^2}^{\frac45}\norm{\mathcal{S}_1(\Phi,\Psi)}_{L}^2 \\
	&\leq \frac{1}{18}\norm{(\Phi,\Psi)}_{K}^2 + C\Big(\norm{(\delt\phi_1,\delt\psi_1)}_{(\mathcal{H}^1_L)^\prime}^2 + \norm{(\phi_1,\psi_1)}_{\mathcal{H}^2}^4\Big)\norm{\mathcal{S}_1(\Phi,\Psi)}_{L}^2, \\
	K_6 &\leq \frac{1}{18}\norm{(\Phi,\Psi)}_{K}^2 + C\norm{(\delt\phi_1,\delt\psi_1)}_{(\mathcal{H}^1_L)^\prime}^2\norm{\mathcal{S}_1(\Phi,\Psi)}_{L}^2.
\end{align*}
Therefore, we conclude that
\begin{align}\label{Est:J2}
    J_2 \leq \frac{5}{18}\norm{(\Phi,\Psi)}_{K}^2 + C\Big(\norm{(\delt\phi_1,\delt\psi_1)}_{(\mathcal{H}^1_L)^\prime}^2 + \norm{(\phi_1,\psi_1)}_{\mathcal{H}^2}^4\Big)\norm{\mathcal{S}_1(\Phi,\Psi)}_{L}^2.
\end{align}
For $J_3$, we can simply use the trace theorem and the Sobolev inequality in combination with \eqref{Est:EBS:Apriori} and \eqref{Est:S_1:H^2:PhiPsi} to find that
\begin{align}\label{Est:J3}
    %\begin{split}
        J_3 &\leq C\norm{(\delt\phi_1,\delt\psi_1)}_{(\mathcal{H}^1_L)^\prime}\chi(L)\norm{\beta m_\Ga^\prime(\psi_1)\big\vert\Gradg\mathcal{S}_1^\Ga(\Phi,\Psi)\big\vert^2 - m_\Om^\prime(\phi_1)\big\vert\Grad\mathcal{S}_1^\Om(\Phi,\Psi)\big\vert^2}_{L^2(\Ga)} \nonumber \\
        &\leq C\chi(L)\norm{(\delt\phi_1,\delt\psi_1)}_{(\mathcal{H}^1_L)^\prime}\norm{(\Grad\mathcal{S}_1^\Om(\Phi,\Psi),\Gradg\mathcal{S}_1^\Ga(\Phi,\Psi))}_{L^4(\Ga)}^2 \nonumber \\
        &\leq C\chi(L)\norm{(\delt\phi_1,\delt\psi_1)}_{(\mathcal{H}^1_L)^\prime}\norm{\mathcal{S}_1(\Phi,\Psi)}_{\mathcal{H}^2}^2 \nonumber \\
        &\leq C\chi(L)\norm{(\delt\phi_1,\delt\psi_1)}_{(\mathcal{H}^1_L)^\prime}\Big(\norm{(\phi_1,\psi_1)}_{\mathcal{H}^2}^2\norm{\mathcal{S}_1(\Phi,\Psi)}_{L}^2 + \norm{\mathcal{S}_1(\Phi,\Psi)}_{L}\norm{(\Phi,\Psi)}_{K}\Big) \\
        &\leq \frac{1}{18}\norm{(\Phi,\Psi)}_{K}^2 + C\chi(L)\Big(\norm{(\delt\phi_1,\delt\psi_1)}_{(\mathcal{H}^1_L)^\prime}^2 + \norm{(\delt\phi_1,\delt\psi_1)}_{(\mathcal{H}^1_L)^\prime}\norm{(\phi_1,\psi_1)}_{\mathcal{H}^2}^2\Big) \nonumber \\
        &\quad\times\norm{\mathcal{S}_1(\Phi,\Psi)}_{L}^2 \nonumber \\
        &\leq \frac{1}{18}\norm{(\Phi,\Psi)}_{K}^2 + C\chi(L)\Big(\norm{(\delt\phi_1,\delt\psi_1)}_{(\mathcal{H}^1_L)^\prime}^2 + \norm{(\phi_1,\psi_1)}_{\mathcal{H}^2}^4\Big)\norm{\mathcal{S}_1(\Phi,\Psi)}_{L}^2. \nonumber 
    %\end{split}
\end{align}
Collecting the the estimates \eqref{Est:J1}, \eqref{Est:J2} and \eqref{Est:J3}, we obtain the following bound for $I_1$:
\begin{align}\label{Est:I1}
    \abs{I_1} \leq \frac{7}{18}\norm{(\Phi,\Psi)}_{K}^2 + C\Big(\norm{(\delt\phi_1,\delt\psi_1)}_{(\mathcal{H}^1_L)^\prime}^2 + \norm{(\phi_1,\psi_1)}_{\mathcal{H}^2}^4\Big)\norm{\mathcal{S}_1(\Phi,\Psi)}_{L}^2.
\end{align}
Next, concerning $I_2$, we employ again \eqref{Prelim:Est:Inteprol}, \eqref{Est:S_1:Grad:L4:PhiPsi} and \eqref{Est:PhiPsi:L4:Interpol}, and deduce
\begin{align}\label{Est:I2}
    \abs{I_2} &= \Big\vert \intO \big(m_\Om(\phi_1) - m_\Om(\phi_2)\big)\Grad\mu_2\cdot\Grad\mathcal{S}^\Om_1(\Phi,\Psi)\dx \nonumber \\
    &\qquad + \intG \big(m_\Ga(\psi_1) - m_\Ga(\psi_2)\big)\Gradg\theta_2\cdot\Gradg\mathcal{S}^\Ga_1(\Phi,\Psi)\dG\Big\vert \nonumber \\
    &\leq \norm{(\Grad\mu_2,\Gradg\theta_2)}_{\mathcal{L}^2}\norm{(m_\Om(\phi_1) - m_\Om(\phi_2),m_\Ga(\psi_1) - m_\Ga(\psi_2))}_{\mathcal{L}^4}\norm{(\Grad\mathcal{S}_1^\Om(\Phi,\Psi),\Gradg\mathcal{S}_1^\Ga(\Phi,\Psi))}_{\mathcal{L}^4} \nonumber \\
    &\leq C\norm{(\Grad\mu_2,\Gradg\theta_2)}_{\mathcal{L}^2}\norm{(\Phi,\Psi)}_{\mathcal{L}^4}\norm{(\Grad\mathcal{S}_1^\Om(\Phi,\Psi),\Gradg\mathcal{S}_1^\Ga(\Phi,\Psi))}_{\mathcal{L}^4} \nonumber \\
    &\leq C\norm{(\mu_2,\theta_2)}_{L}\norm{\mathcal{S}_1(\Phi,\Psi)}_{L}^{\frac14}\norm{(\Phi,\Psi)}_{K}^{\frac34} \nonumber \\
    &\quad\times\Big(\norm{(\phi_1,\psi_1)}_{\mathcal{H}^2}^{\frac12}\norm{\mathcal{S}_1(\Phi,\Psi)}_{L} + \norm{\mathcal{S}_1(\Phi,\Psi)}_{L}^{\frac34}\norm{(\Phi,\Psi)}_{K}^{\frac14}\Big) \nonumber \\
    &\leq C\norm{(\mu_2,\theta_2)}_{L}\norm{(\phi_1,\psi_1)}_{\mathcal{H}^2}^{\frac12}\norm{\mathcal{S}_1(\Phi,\Psi)}_{L}^{\frac54}\norm{(\Phi,\Psi)}_{K}^{\frac34} \\
    &\quad + C\norm{(\mu_2,\theta_2)}_{L}\norm{\mathcal{S}_1(\Phi,\Psi)}_{L}\norm{(\Phi,\Psi)}_{K} \nonumber \\
    &\leq \frac{1}{18}\norm{(\Phi,\Psi)}_{K}^2 + C\Big(\norm{(\mu_2,\theta_2)}_{L}^{\frac85}\norm{(\phi_1,\psi_1)}_{\mathcal{H}^2}^{\frac45} + \norm{(\mu_2,\theta_2)}_{L}^2\Big)\norm{\mathcal{S}_1(\Phi,\Psi)}_{L}^2 \nonumber \\
    &\leq \frac{1}{18}\norm{(\Phi,\Psi)}_{K}^2 + C\Big(\norm{(\mu_2,\theta_2)}_{L}^2 + \norm{(\phi_1,\psi_1)}_{\mathcal{H}^2}^4\Big)\norm{\mathcal{S}_1(\Phi,\Psi)}_{L}^2. \nonumber
\end{align}
Lastly, utilizing \eqref{Est:fg:L^2:SolOp:2} once more, we find
\begin{align}\label{Est:PhiPsi:L2}
	C\norm{(\Phi,\Psi)}_{\mathcal{L}^2}\leq C\norm{\mathcal{S}_1(\Phi,\Psi)}_{L}\norm{(\Phi,\Psi)}_{K} \leq \frac{1}{18}\norm{(\Phi,\Psi)}_{K}^2 + C\norm{\mathcal{S}_1(\Phi,\Psi)}_{L}^2.
\end{align}
Recalling the equivalence of the respective norms on $\mathcal{V}^{-1}_{L}$, we combine the differential inequality \eqref{DiffIneq} with the estimates \eqref{Est:I1}, \eqref{Est:I2} and \eqref{Est:PhiPsi:L2}, and end up with
\begin{align}\label{Est:PreGronwall:Uniq}
	&\ddt \frac12\norm{(\Phi,\Psi)}_{L,[\phi_1,\psi_1],\ast}^2 + \frac12\norm{(\Phi,\Psi)}_{K}^2 \leq Q(t)\norm{(\Phi,\Psi)}_{L,[\phi_1,\psi_1],\ast}^2,
\end{align}
where
\begin{align*}
	Q(\cdot) = C\Big(1 + \norm{(\mu_2,\theta_2)}_{L}^2 + \norm{(\delt\phi_1,\delt\psi_1)}_{(\mathcal{H}^1_L)^\prime}^2 + \norm{(\phi_1,\psi_1)}_{\mathcal{H}^2}^4\Big) \in L^1(0,T) \qquad\text{for all~}T > 0.
\end{align*}
Therefore, noting again the equivalence of the respective norms on $\mathcal{V}^{-1}_{L}$, an application of Gronwall's lemma entails that
\begin{align*}
	\norm{\big(\phi_1(t) - \phi_2(t), \psi_1(t) - \psi_2(t)\big)}_{(\mathcal{H}^1_L)^\prime}^2 \leq \norm{\big(\phi_1^0 - \phi_2^0,\psi_1^0 - \psi_2^0\big)}_{(\mathcal{H}^1_L)^\prime}^2\exp\Big(\int_0^t Q(s)\ds\Big)
\end{align*}
for all $t\geq 0$. Consequently, if $\phi_1^0 = \phi_2^0$ a.e. in $\Om$ and $\psi_1^0 = \psi_2^0$ a.e. on $\Ga$, the above inequality yields the uniqueness of weak solutions.
\end{proof}

\medskip
\section{Propagation of Regularity and Instantaneous Separation Property}
\label{Section:PropRegSep}

In this section, we prove Theorem~\ref{Theorem:PropReg} and Theorem~\ref{Theorem:Separation} regarding the propagation of regularity and the instantaneous separation property.

The proof of Theorem~\ref{Theorem:PropReg} is based on deriving suitable estimates for the time difference quotients $(\delth\phi,\delth\psi)$ by exploiting the differential inequality \eqref{Est:PreGronwall:Uniq}. This argument, however, requires the mobility functions to satisfy higher regularity assumptions, namely, $m_\Om,m_\Ga\in C^2([-1,1])$. Therefore, the strategy of the proof is as follows. First, under the stronger assumption $m_\Om,m_\Ga\in C^2([-1,1])$, we prove that the unique weak solution to \eqref{EQ:SYSTEM} satisfies the desired propagation of regularity. Then, in the second step, we return to the original assumption $m_\Om,m_\Ga\in C^1([-1,1])$, and construct a sequence of smooth approximations of the mobility functions lying in $C^2([-1,1])$. This allows us to apply the results from the first step to obtain a corresponding sequence of approximate solutions to \eqref{EQ:SYSTEM}, where now the mobility functions are replaced by their respective regularizations. Then, employing the chain-rule formula established in Proposition~\ref{App:Proposition:CR}, we derive uniform estimates for the approximate solutions with respect to the approximation parameter. Finally, standard compactness arguments yield the desired result.

\begin{proof}[Proof of Theorem~\ref{Theorem:PropReg}]

\textbf{Step 1:} We start by assuming $m_\Om,m_\Ga\in C^2([-1,1])$.
To prove the propagation of regularity, we make use of the differential inequality \eqref{Est:PreGronwall:Uniq} proven in the uniqueness part of Theorem~\ref{Theorem}. First, for brevity, we use the notation
\begin{align*}
    &\mathcal{S}_h(f,g) = \mathcal{S}_{L}[\phi(\cdot+h),\psi(\cdot+h)](f,g), \\
    &\norm{\cdot}_{L,h,\ast} = \norm{\cdot}_{L,[\phi(\cdot+h),\psi(\cdot+h)],\ast}.
\end{align*}
Then for any $h\in(0,1)$, we apply the estimate \eqref{Est:PreGronwall:Uniq} to $(\phi_1,\psi_1) = (\phi,\psi)$ and $(\phi_2,\psi_2) = (\phi(\cdot + h),\psi(\cdot + h))$. Dividing the resulting inequality by $h^2$, and denoting the difference quotient in time of a function $f$ by $\delth f(t) = \frac1h\big(f(t + h) - f(t)\big)$, we obtain
\begin{align}\label{PreGronwall:G:Delth}
    \begin{split}
    	&\ddt \frac12\norm{(\delth\phi(t),\delth\psi(t))}_{L,h,\ast}^2 + \frac12\norm{(\delth\phi(t),\delth\psi(t))}_{K}^2 \leq Q_h(t)\norm{(\delth\phi(t),\delth\psi(t))}_{L,h,\ast}^2
    \end{split}
\end{align}
where, now,
\begin{align*}
	Q_h(\cdot) = C\Big(1 + \norm{(\mu(\cdot + h),\theta\cdot + h))}_{L}^2 + \norm{(\delt\phi,\delt\psi)}_{(\mathcal{H}^1_L)^\prime}^2 + \norm{(\phi,\psi)}_{\mathcal{H}^2}^4\Big)\in L^1(0,T)
\end{align*}
for any $T > 0$.
To apply the uniform Gronwall lemma~\ref{Lemma:Gronwall}, we note that
\begin{align}\label{Est:UniformGronwalL:Pre}
	\sup_{t\geq 0}\int_t^{t+1}\norm{(\delth\phi(s),\delth\psi(s))}_{L,h,\ast}^2\ds \leq C_0, \qquad \sup_{t\geq 0}\int_t^{t+1} Q_h(s)\ds \leq C_1,
\end{align}
where the constants $C_0, C_1 > 0$ solely depend on $E(\phi_0,\psi_0)$, $\mean{\phi_0}{\psi_0}$, and the parameters of the system. Let $\tau > 0$. Then, an application of Lemma~\ref{Lemma:Gronwall} with $t_0 = 0$ and $r = \tau$ yields
\begin{align}\label{Appl:GronwallUniform}
	&\norm{(\delth\phi(t),		\delth\psi(t))}_{L,h,\ast}^2 \leq \frac{C_0}{\tau}\exp(C_1) \qquad\text{for all~}t \geq\tau.
\end{align}
Recalling again the equivalence of the corresponding norms on $\mathcal{V}^{-1}_{L}$, we readily obtain
    \begin{align}\label{Est:HighReg:Delth:pp:Prime}
    &\norm{(\delth\phi(t),\delth\psi(t))}_{(\mathcal{H}^1_L)^\prime}^2\leq \frac{CC_0}{\tau}\exp(C_1) \qquad\text{for all~}t\geq\tau,
\end{align}
where the constant $C > 0$ depends solely on the parameters of the system. This allows us to pass to the limit $h\rightarrow 0$ in \eqref{Est:HighReg:Delth:pp:Prime} to deduce that
\begin{align}\label{Est:HighReg:Delt:pp:Prime}
    \sup_{t\geq\tau}\norm{(\delt\phi(t),\delt\psi(t))}_{(\mathcal{H}^1_L)^\prime}^2 \leq \frac{CC_0}{\tau}\exp(C_1).
\end{align}
Then, we integrate \eqref{PreGronwall:G:Delth} over the time interval $[t,t+1]$, and employ the estimates \eqref{Est:UniformGronwalL:Pre} and \eqref{Est:HighReg:Delt:pp:Prime}. Passing again to the limit $h\rightarrow 0$ in the resulting estimate yields
\begin{align}\label{Est:HighReg:pp:Ka}
    \sup_{t\geq\tau}\int_t^{t+1} \norm{(\delt\phi(s),\delt\psi(s))}_{K}^2\ds \leq \frac{(1+C_1)CC_0}{\tau}\exp(C_1).
\end{align} 
Next, testing \eqref{WF:PP:SING} with $\mathcal{S}_{L}[\phi,\psi](\delt\phi,\delt\psi)$ and noting on the identities \eqref{Id:M:SolOp:Mean}-\eqref{Id:T:SolOp:Mean}, an application of the bulk-surface Poincar\'{e} inequality together with Young's inequality shows that
\begin{align*}
    \norm{(\mu,\theta)}_{L} \leq C\norm{(\delt\phi,\delt\psi)}_{(\mathcal{H}^1_L)^\prime},
\end{align*}
which proves that
\begin{align*}
	t\mapsto \norm{(\mu(t),\theta(t))}_{L} \in L^\infty(\tau,\infty)
\end{align*}
in view of \eqref{Est:HighReg:Delt:pp:Prime}.
Thus, in light of \eqref{Est:MEAN:MT:DELN}, we use the bulk-surface Poincar\'{e} inequality to obtain that
\begin{align}\label{Est:HighReg:MT:H1}
	\norm{(\mu,\theta)}_{L^\infty(\tau,\infty;\mathcal{H}^1)}\leq C.
\end{align}
By \eqref{Est:PP:Pot:L^p}we learn that
\begin{align}\label{Est:HighReg:PP:Pot:p}
    \norm{(\phi,\psi)}_{L^\infty(\tau,\infty;\mathcal{W}^{2,p})} + \norm{(F_1^\prime(\phi),G_1^\prime(\psi))}_{L^\infty(\tau,\infty;\mathcal{L}^p)} \leq C
\end{align}
for all $2 \leq p < \infty$. To derive higher regularity estimates for the chemical potentials, we first recall that
\begin{align*}
    &\norm{(\mu - \beta\mean{\mu}{\theta},\theta - \mean{\mu}{\theta})}_{\mathcal{H}^2} \\
    &\quad = \norm{\mathcal{S}_{L}[\phi,\psi](\delt\phi,\delt\psi)}_{\mathcal{H}^2} \\
    &\quad\leq C\Big(\norm{(\Grad\phi,\Gradg\psi)}_{\mathcal{L}^2}\norm{(\phi,\psi)}_{\mathcal{H}^2}\norm{\mathcal{S}_{L}[\phi,\psi](\delt\phi,\delt\psi)}_{L} + \norm{(\mu,\theta)}_{L}^{\frac12}\norm{(\delt\phi,\delt\psi)}_{K}^{\frac12}\Big).
\end{align*}
Thus, thanks to \eqref{Est:HighReg:Delt:pp:Prime}, \eqref{Est:HighReg:MT:H1} and \eqref{Est:HighReg:PP:Pot:p}, we have
\begin{align*}
    \norm{(\mu - \beta\mean{\mu}{\theta},\theta - \mean{\mu}{\theta})}_{\mathcal{H}^2} \leq C\Big(1 + \norm{(\delt\phi,\delt\psi)}_{K}^{\frac12}\Big)
\end{align*}
a.e. on $(\tau,\infty)$. Therefore, we infer with \eqref{Est:HighReg:pp:Ka} that
\begin{align}\label{Est:HighReg:MT:mean:H^2}
    \sup_{t\geq\tau}\int_t^{t+1}\norm{(\mu - \mean{\mu}{\theta},\theta - \mean{\mu}{\theta})}_{\mathcal{H}^2}^4 \ds \leq C.
\end{align}
In light of \eqref{Est:MEAN:MT:DELN}, the latter implies that $(\mu,\theta)\in L^4_{\mathrm{uloc}}([\tau,\infty);\mathcal{H}^2)$.

Furthermore, exploiting the additional assumption $m_\Om,m_\Ga\in C^2([-1,1])$, we can use Corollary~\ref{Corollary:BSE:H3} to find that
\begin{align}\label{Est:MT:H^3:mean}
    \begin{split}
        &\norm{(\mu - \beta\mean{\mu}{\theta},\theta - \mean{\mu}{\theta})}_{\mathcal{H}^3} \\
        &\quad\leq C\Bigg( \Bignorm{\bigg(\frac{\delt\phi}{m_\Om(\phi)},\frac{\delt\psi}{m_\Ga(\psi)}\bigg)}_{\mathcal{H}^1} + \Bignorm{\bigg(\frac{m_\Om^\prime(\phi)\Grad\phi\cdot\Grad\mu}{m_\Om(\phi)},\frac{m_\Ga^\prime(\psi)\Gradg\psi\cdot\Gradg\theta}{m_\Ga(\psi)}\bigg)}_{\mathcal{H}^1}\Bigg).
    \end{split}
\end{align}
By standard computations, we estimate the terms on the right-hand side of \eqref{Est:MT:H^3:mean} as
\begin{align*}
    &\Bignorm{\bigg(\frac{\delt\phi}{m_\Om(\phi)},\frac{\delt\psi}{m_\Ga(\psi)}\bigg)}_{\mathcal{H}^1} \\
    &\quad\leq \Bignorm{\bigg(\frac{\delt\phi}{m_\Om(\phi)},\frac{\delt\psi}{m_\Ga(\psi)}\bigg)}_{\mathcal{L}^2} + \Bignorm{\bigg(\frac{m_\Om^\prime(\phi)\delt\phi\Grad\phi}{m_\Om(\phi)^2},\frac{m_\Ga^\prime(\psi)\delt\psi\Gradg\psi}{m_\Ga(\psi)^2}\bigg)}_{\mathcal{L}^2} + \Bignorm{\bigg(\frac{\Grad\delt\phi}{m_\Om(\phi)},\frac{\Gradg\delt\psi}{m_\Ga(\psi)}\bigg)}_{\mathcal{L}^2} \\
    &\quad\leq C\norm{(\delt\phi,\delt\psi)}_{\mathcal{L}^2} + C\norm{(\Grad\phi,\Gradg\psi)}_{\mathcal{L}^\infty}\norm{(\delt\phi,\delt\psi)}_{\mathcal{L}^2} + C\norm{(\delt\phi,\delt\psi)}_{K}
\end{align*}
and
\begin{align*}
    &\Bignorm{\bigg(\frac{m_\Om^\prime(\phi)\Grad\phi\cdot\Grad\mu}{m_\Om(\phi)},\frac{m_\Ga^\prime(\psi)\Gradg\psi\cdot\Gradg\theta}{m_\Ga(\psi)}\bigg)}_{\mathcal{H}^1} \\
    &\quad\leq \Bignorm{\bigg(\frac{m_\Om^\prime(\phi)\Grad\phi\cdot\Grad\mu}{m_\Om(\phi)},\frac{m_\Ga^\prime(\psi)\Gradg\psi\cdot\Gradg\theta}{m_\Ga(\psi)}\bigg)}_{\mathcal{L}^2} \\
    &\qquad + \Bignorm{\bigg(\frac{\big(m_\Om^{\prime\prime}(\phi)m_\Om(\phi) - m_\Om^\prime(\phi)^2\big)\big(\Grad\phi\cdot\Grad\mu\big)\Grad\phi}{m_\Om(\phi)^2},\frac{\big(m_\Ga^{\prime\prime}(\psi)m_\Ga(\psi) - m_\Ga^\prime(\psi)^2\big)\big(\Gradg\psi\cdot\Gradg\theta\big)\Gradg\psi}{m_\Ga(\psi)^2}\bigg)}_{\mathcal{L}^2} \\
    &\qquad + \Bignorm{\bigg(\frac{m_\Om^\prime(\phi)D^2\phi\Grad\mu}{m_\Om(\phi)},\frac{m_\Ga^\prime(\psi)D_\Ga^2\psi\Gradg\theta}{m_\Ga(\psi)}\bigg)}_{\mathcal{L}^2} + \Bignorm{\bigg(\frac{m_\Om^\prime(\phi)D^2\mu\Grad\phi}{m_\Om(\phi)},\frac{m_\Ga^\prime(\psi)D_\Ga^2\theta\Gradg\psi}{m_\Ga(\psi)}\bigg)}_{\mathcal{L}^2} \\
    &\quad\leq C\norm{(\Grad\phi,\Gradg\psi)}_{\mathcal{L}^\infty}\norm{(\Grad\mu,\Gradg\theta)}_{\mathcal{L}^2} + C\norm{(\Grad\phi,\Gradg\psi)}_{\mathcal{L}^\infty}^2\norm{(\Grad\mu,\Gradg\theta)}_{\mathcal{L}^2} \\
    &\qquad + C\norm{(\phi,\psi)}_{\mathcal{W}^{2,4}}\norm{(\Grad\mu,\Gradg\theta)}_{\mathcal{L}^4} + C\norm{(\Grad\phi,\Gradg\psi)}_{\mathcal{L}^\infty}\norm{(\mu - \beta\mean{\mu}{\theta},\theta - \mean{\mu}{\theta})}_{\mathcal{H}^2}.
\end{align*}
Recalling the Sobolev embedding $\mathcal{W}^{2,3}\emb\mathcal{W}^{1,\infty}$, and exploiting \eqref{Est:HighReg:PP:Pot:p}, we arrive at
\begin{align*}
    &\norm{(\mu - \beta\mean{\mu}{\theta},\theta - \mean{\mu}{\theta})}_{\mathcal{H}^3} \\
    &\quad\leq C\Big(1 + \norm{(\mu - \beta\mean{\mu}{\theta},\theta - \mean{\mu}{\theta})}_{\mathcal{H}^2} + \norm{(\delt\phi,\delt\psi)}_{K}\Big).
\end{align*}
Hence, by \eqref{Est:HighReg:pp:Ka} and \eqref{Est:HighReg:MT:mean:H^2}, we conclude that
\begin{align*}
    \sup_{t\geq\tau}\int_t^{t+1}\norm{(\mu - \beta\mean{\mu}{\theta},\theta - \mean{\mu}{\theta})}_{\mathcal{H}^3}^2\ds \leq C.
\end{align*}
In light of \eqref{Est:MEAN:MT:DELN}, the latter entails that $(\mu,\theta)\in L^2_{\mathrm{uloc}}([\tau,\infty);\mathcal{H}^3)$.

\noindent
\textbf{Step 2:} Now, let $m_\Om,m_\Ga\in C^1([-1,1])$. Then, we can construct a sequence $(m_{\Om,k})_{k\in\N}\subset C^2([-1,1])$ such that
\begin{enumerate}[label=\textbf{(M\arabic*)},topsep=0ex,leftmargin=*]
    \item\label{M1} $0 < \frac{m^\ast}{2} \leq m_{\Om,k}(s) \leq 2M^\ast$ for all $s\in[-1,1]$ and $k\in\N$;
    \item\label{M2} $m_{\Om,k}\rightarrow m_\Om$ in $C^1([-1,1])$ as $k\rightarrow\infty$;
    \item\label{M3} $\vert m_{\Om,k}^\prime(s)\vert \leq C_{\mathrm{mob}}\norm{m_\Om^\prime}_{L^\infty([-1,1])}$ for all $s\in[-1,1]$ and $k\in\N$,
\end{enumerate}
where $m_\ast$ and $M_\ast$ are the constants from \ref{Ass:Mobility}, and the constant $C_{\mathrm{mob}} > 0$ does not depend on $k\in\N$.
Similarly, we can construct a sequence $(m_{\Ga,k})_{k\in\N}\subset C^2([-1,1])$ that satisfies analogous properties. Considering the corresponding unique weak solution $(\phi_k,\psi_k,\mu_k,\theta_k)$ which exists according to Theorem~\ref{Theorem}, the results from Step 1 show that for any $\tau > 0$, it holds that
\begin{align*}
    (\phi_k,\psi_k)&\in L^\infty(\tau,\infty;\mathcal{W}^{2,p}), \quad  (\delt\phi_k,\delt\psi_k)\in L^\infty(\tau,\infty;(\mathcal{H}^1_L)^\prime)\cap L^2_{\mathrm{uloc}}([\tau,\infty);\mathcal{H}^1), \\
    (\mu_k,\theta_k)&\in L^\infty(\tau,\infty;\mathcal{H}^1_L)\cap L^2_{\mathrm{uloc}}([\tau,\infty);\mathcal{H}^3), \quad (F^\prime(\phi_k),G^\prime(\psi_k))\in L^\infty(\tau,\infty;\mathcal{L}^p),
\end{align*}
for any $2 \leq p < \infty$ and all $k\in\N$. Moreover, with these regularities at hand, and exploiting the additional assumption \ref{Ass:Potentials:Sep}, one can show similarly to \cite[Theorem~2.3]{Lv2024a} and \cite[Theorem~2.2]{Lv2024b} that
\begin{align*}
    (F^{\prime\prime}(\phi_k),G^{\prime\prime}(\psi_k))\in L^\infty(\tau,\infty;\mathcal{L}^p)
\end{align*}
for any $2 \leq p < \infty$ (see also \cite[Theorem~3.11]{Knopf2025}). From this, one can deduce that the time derivative $(\delt\mu_k,\delt\theta_k)$ exists for all $k\in\N$ in the sense that $(\delt\mu_k,\delt\theta_k)\in L^2_{\mathrm{uloc}}([\tau,\infty);(\mathcal{H}^1)^\prime)$, and it satisfies
\begin{align}\label{WF:DELT:MT}
    \begin{split}
        \bigang{(\delt\mu_k,\delt\theta_k)}{(\eta,\vartheta)}_{\mathcal{H}^1} &= \intO \Grad\delt\phi_k\cdot\Grad\eta + F^{\prime\prime}(\phi_k)\delt\phi_k\eta\dx \\
        &\quad + \intG \Gradg\delt\psi_k\cdot\Gradg\vartheta + G^{\prime\prime}(\psi_k)\delt\psi_k\vartheta\dG \\
        &\quad + \chi(K) \intG (\alpha\delt\psi_k - \delt\phi_k)(\alpha\psi_k - \phi_k)\dG
    \end{split}
\end{align}
a.e. on $(\tau,\infty)$ for all $(\eta,\vartheta)\in\mathcal{H}^1$. For more details, we refer to \cite{Giorgini2025}. In the following, the letter $C$ will denote generic positive constants that are independent of $k$.

As shown at the beginning of the proof of Theorem~\ref{Theorem}, one readily deduces from the energy inequality \eqref{WEDL:SING} the following uniform estimates
\begin{align}
    &\sup_{t\geq 0}\norm{(\phi_k(t),\psi_k(t))}_{\mathcal{H}^1} \leq C, \\
    &\int_0^\infty \norm{(\mu_k,\theta_k)}_{L}^2 + \norm{(\delt\phi_k,\delt\psi_k)}_{(\mathcal{H}^1_L)^\prime}^2\ds \leq C, \label{Est:Low:MT:DELT:PP:k}\\
    &\sup_{t\geq0} \int_t^{t+1}\norm{(\phi_k,\psi_k)}_{\mathcal{H}^2}^4\ds \leq C \label{Est:Low:PP:L4H2:k}
\end{align}
as well as
\begin{align*}
    \norm{(\phi_k,\psi_k)}_{\mathcal{W}^{2,p}} + \norm{(F_1^\prime(\phi_k),G_1^\prime(\psi_k))}_{\mathcal{L}^p} \leq C\big(1 + \norm{(\mu_k,\theta_k)}_{L}\big)
\end{align*}
a.e. on $(0,\infty)$. To establish the main estimates for the regularity argument, we aim to apply Proposition~\ref{App:Proposition:CR}. To this end, noting on
\begin{align*}
    \big(\Div(m_{\Om,k}(\phi_k)\Grad\mu_k),\Divg(m_{\Ga,k}(\psi_k)\Gradg\theta_k) - \beta m_{\Om,k}(\phi_k)\deln\mu_k\big) = (\delt\phi_k,\delt\psi_k) \in L^2_{\mathrm{uloc}}([\tau,\infty);\mathcal{H}^1),
\end{align*}
the aforementioned proposition yields that
\begin{align*}
    &\ddt\frac12\Big(\intO m_{\Om,k}(\phi_k)\abs{\Grad\mu_k}^2\dx + \intG m_{\Ga,k}(\psi_k)\abs{\Gradg\theta_k}^2\dG + \chi(L)\intG (\beta\theta_k - \mu_k)^2\dG \Big) \nonumber \\
    &\quad = \bigang{(\delt\mu_k,\delt\theta_k)}{(-\Div(m_{\Om,k}(\phi_k)\Grad\mu_k),-\Divg(m_{\Ga,k}(\psi_k)\Gradg\theta_k) + \beta m_{\Om,k}(\phi_k)\deln\mu_k)}_{\mathcal{H}^1} \nonumber \\
    &\qquad + \intO m_{\Om,k}^\prime(\phi_k)\delt\phi_k\abs{\Grad\mu_k}^2\dx + \intG m_{\Ga,k}^\prime(\psi_k)\delt\psi_k\abs{\Gradg\theta_k}^2\dG \nonumber \\
    &\quad = -\bigang{(\delt\mu_k,\delt\theta_k)}{(\delt\phi_k,\delt\psi_k)}_{\mathcal{H}^1} \\
    &\qquad + \intO m_{\Om,k}^\prime(\phi_k)\delt\phi_k\abs{\Grad\mu_k}^2\dx + \intG m_{\Ga,k}^\prime(\psi_k)\delt\psi_k\abs{\Gradg\theta_k}^2\dG \nonumber \\
    &\quad = - \norm{(\delt\phi_k,\delt\psi_k)}_{K}^2 - \intO F^{\prime\prime}(\phi_k)\abs{\delt\phi_k}^2\dx - \intG G^{\prime\prime}(\psi_k)\abs{\delt\psi_k}^2\dG \nonumber \\
    &\qquad + \intO m_{\Om,k}^\prime(\phi_k)\delt\phi_k\abs{\Grad\mu_k}^2\dx + \intG m_{\Ga,k}^\prime(\psi_k)\delt\psi_k\abs{\Gradg\theta_k}^2\dG \nonumber
\end{align*}
a.e. on $(\tau,\infty)$. Exploiting the strong convexity of $F_1$ and $G_1$, respectively, as well as the Lipschitz continuity of $F_2^\prime$ and $G_2^\prime$, respectively, we readily infer
\begin{align}\label{Est:PreGronwall:HighReg:k:1}
    \begin{split}
        &\ddt\frac12\Big(\intO m_{\Om,k}(\phi_k)\abs{\Grad\mu_k}^2\dx + \intG m_{\Ga,k}(\psi_k)\abs{\Gradg\theta_k}^2\dG + \chi(L)\intG (\beta\theta_k - \mu_k)^2\dG \Big) \\
        &\qquad + \norm{(\delt\phi_k,\delt\psi_k)}_{K}^2\\
        &\quad \leq C\norm{(\delt\phi_k,\delt\psi_k)}_{\mathcal{L}^2}^2 + \intO m_{\Om,k}^\prime(\phi_k)\delt\phi_k\abs{\Grad\mu_k}^2\dx + \intG m_{\Ga,k}^\prime(\psi_k)\delt\psi_k\abs{\Gradg\theta_k}^2\dG.
    \end{split}
\end{align}

For the first term on the right-hand side of \eqref{Est:PreGronwall:HighReg:k:1}, if $K\in[0,\infty)$, we employ Ehrling's lemma and \eqref{Est:Sol:Delt:pp:Lb} to find that
\begin{align}\label{Est:delt:pp:k:Ehrling}
     C\norm{(\delt\phi_k,\delt\psi_k)}_{\mathcal{L}^2}^2 \leq \frac14\norm{(\delt\phi_k,\delt\psi_k)}_{K}^2 + C\norm{(\mu_k,\theta_k)}_{L}^2.
\end{align}
On the other hand, if $K = \infty$, Ehrling's lemma yields that, for any $\varepsilon > 0$,
\begin{align*}
    C\norm{(\delt\phi_k,\delt\psi_k)}_{\mathcal{L}^2}^2 &\leq \varepsilon\norm{(\delt\phi_k,\delt\psi_k)}_{\mathcal{H}^1}^2 + C_\varepsilon\norm{(\mu_k,\theta_k)}_{L}^2 \\
    &\leq \varepsilon\norm{(\delt\phi_k,\delt\psi_k)}_{\mathcal{L}^2}^2 + \varepsilon\norm{(\delt\phi_k,\delt\psi_k)}_{K}^2 + C_\varepsilon\norm{(\mu_k,\theta_k)}_{L}^2.
\end{align*}
Consequently, choosing $\varepsilon = \frac{C}{5}$, we conclude that \eqref{Est:delt:pp:k:Ehrling} also holds in the case $K = \infty$.

Next, to control the remaining terms on the right-hand side of \eqref{Est:PreGronwall:HighReg:k:1}, we use \eqref{Prelim:Est:Inteprol} and \ref{M3}, obtaining
\begin{align}\label{Est:jonas:1}
    \begin{split}
        &\abs{\intO m_{\Om,k}^\prime(\phi_k)\delt\phi_k\abs{\Grad\mu_k}^2\dx + \intG m_{\Ga,k}^\prime(\psi_k)\delt\psi_k\abs{\Gradg\theta_k}^2\dG} \\
        &\quad\leq C\norm{(\delt\phi_k,\delt\psi_k)}_{\mathcal{L}^2}\norm{(\Grad\mu_k,\Gradg\theta_k)}_{\mathcal{L}^4}^2 \\
        &\quad\leq C\norm{(\delt\phi_k,\delt\psi_k)}_{\mathcal{L}^2}\norm{(\mu_k,\theta_k)}_{L}\norm{(\mu_k - \beta\mean{\mu_k}{\theta_k},\theta_k - \mean{\mu_k}{\theta_k})}_{\mathcal{H}^2}.
    \end{split}
\end{align}
Arguing as previously done in Step 1, we deduce with \eqref{Est:fg:L^2:SolOp:2}, \eqref{Est:Sol:G:H^2}, \eqref{Est:PP:H1:Linfty}, and \eqref{Est:Sol:Delt:pp:Lb} that
\begin{align*}
    &\norm{(\mu_k - \beta\mean{\mu_k}{\theta_k},\theta_k - \mean{\mu_k}{\theta_k})}_{\mathcal{H}^2} \\
    &\quad = \norm{\mathcal{S}_{L}[\phi_k,\psi_k](\delt\phi_k,\delt\psi_k)}_{\mathcal{H}^2} \\
    &\quad\leq C\Big(\norm{(\Grad\phi_k,\Gradg\psi_k)}_{\mathcal{L}^2}\norm{(\phi_k,\psi_k)}_{\mathcal{H}^2}\norm{\mathcal{S}_{L}[\phi_k,\psi_k](\delt\phi_k,\delt\psi_k)}_{L} + \norm{(\delt\phi_k,\delt\psi_k)}_{\mathcal{L}^2}\Big) \\
    &\quad\leq C\Big(\norm{(\phi_k,\psi_k)}_{\mathcal{H}^2}\norm{(\mu_k,\theta_k)}_{L} + \norm{(\mu_k,\theta_k)}_{L}^{\frac12}\norm{(\delt\phi_k,\delt\psi_k)}_{K}^{\frac12}\Big).
\end{align*}
Consequently, we infer with Young's inequality that
\begin{align}\label{Est:jonas:2}
    &\abs{\intO m_{\Om,k}^\prime(\phi_k)\delt\phi_k\abs{\Grad\mu_k}^2\dx + \intG m_{\Ga,k}^\prime(\psi_k)\delt\psi_k\abs{\Gradg\theta_k}^2\dG} \nonumber \\
    &\quad\leq C\norm{(\delt\phi_k,\delt\psi_k)}_{\mathcal{L}^2}\norm{(\mu_k,\theta_k)}_{L}\Big(\norm{(\phi_k,\psi_k)}_{\mathcal{H}^2}\norm{(\mu_k,\theta_k)}_{L} + \norm{(\mu_k,\theta_k)}_{L}^{\frac12}\norm{(\delt\phi_k,\delt\psi_k)}_{K}^{\frac12}\Big) \nonumber \\
    &\quad\leq C\norm{(\mu_k,\theta_k)}_{L}^{\frac32}\norm{(\delt\phi_k,\delt\psi_k)}_{K}^{\frac12}\Big(\norm{(\phi_k,\psi_k)}_{\mathcal{H}^2}\norm{(\mu_k,\theta_k)}_{L} + \norm{(\mu_k,\theta_k)}_{L}^{\frac12}\norm{(\delt\phi_k,\delt\psi_k)}_{K}^{\frac12}\Big) \nonumber \\
    &\quad\leq
    C\norm{(\delt\phi_k,\delt\psi_k)}_{K}^{\frac12}\norm{(\phi_k,\psi_k)}_{\mathcal{H}^2}\norm{(\mu_k,\theta_k)}_{\mathcal{H}^2}^{\frac52} + C\norm{(\delt\phi_k,\delt\psi_k)}_{K}\norm{(\mu_k,\theta_k)}_{L}^2 \\
    &\quad\leq \frac14\norm{(\delt\phi_k,\delt\psi_k)}_{K}^2 + C\norm{(\phi_k,\psi_k)}_{\mathcal{H}^2}^{\frac43}\norm{(\mu_k,\theta_k)}_{L}^{\frac{10}{3}} + C\norm{(\mu_k,\theta_k)}_{L}^4 \nonumber \\
    &\quad\leq \frac14\norm{(\delt\phi_k,\delt\psi_k)}_{K}^2 + C\Big(\norm{(\phi_k,\psi_k)}_{\mathcal{H}^2}^4 + \norm{(\mu_k,\theta_k)}_{L}^2\Big)\norm{(\mu_k,\theta_k)}_{L}^2. \nonumber
\end{align}
Thus, in view of \eqref{Est:PreGronwall:HighReg:k:1}, we obtain from \eqref{Est:jonas:1} and \eqref{Est:jonas:2} that
\begin{align}\label{Est:PreGronwall:HighReg:k:2}
    \begin{split}
        &\ddt\frac12\Big(\intO m_{\Om,k}(\phi_k)\abs{\Grad\mu_k}^2\dx + \intG m_{\Ga,k}(\psi_k)\abs{\Gradg\theta_k}^2\dG + \chi(L)\intG (\beta\theta_k - \mu_k)^2\dG \Big) \\ 
        &\quad + \frac12\norm{(\delt\phi_k,\delt\psi_k)}_{K}^2 \\
        &\quad\leq C\Big(1 + \norm{(\phi_k,\psi_k)}_{\mathcal{H}^2}^4 + \norm{(\mu_k,\theta_k)}_{L}^2\Big) \\
        &\qquad\times\Big(\intO m_{\Om,k}(\phi_k)\abs{\Grad\mu_k}^2\dx + \intG m_{\Ga,k}(\psi_k)\abs{\Gradg\theta_k}^2\dG + \chi(L)\intG (\beta\theta_k - \mu_k)^2\dG \Big)
    \end{split}
\end{align}
a.e. on $(\tau,\infty)$. In light of \eqref{Est:Low:MT:DELT:PP:k} and \eqref{Est:Low:PP:L4H2:k}, we may now apply the uniform Gronwall Lemma~\ref{Appl:GronwallUniform} and deduce with \ref{M1} that
\begin{align*}
    \sup_{t\geq\tau} \norm{(\mu_k(t),\theta_k(t))}_{L}^2 \leq \frac{C}{\tau}.
\end{align*}
Integrating \eqref{Est:PreGronwall:HighReg:k:2} in time over $[t,t+1]$ for $t\geq\tau$, we find
\begin{align*}
    \sup_{t\geq\tau}\int_t^{t+1}\norm{(\delt\phi_k(s),\delt\psi_k(s))}_{K}^2\ds \leq \frac{C}{\tau}.
\end{align*}
Based on these bounds, we can follow along the lines of Step 1 and conclude that
\begin{align*}
    &\norm{(\mu_k,\theta_k)}_{L^\infty(\tau,\infty;\mathcal{H}^1_L)} + \norm{(\phi_k,\psi_k)}_{L^\infty(\tau,\infty;\mathcal{W}^{2,p})} + \norm{(F_1^\prime(\phi_k),G_1^\prime(\psi_k))}_{L^\infty(\tau,\infty;\mathcal{L}^p)} \\
    &\quad + \norm{(\mu_k,\theta_k)}_{L^4_{\mathrm{uloc}}([\tau,\infty);\mathcal{H}^2)} \leq C
\end{align*}
for all $2 \leq p < \infty$. These estimates ensure, by standard compactness arguments, the existence of a limit quadruple $(\phi,\psi,\mu,\theta)$ solving \eqref{EQ:SYSTEM} in the sense of Definition~\ref{DEF:SING:WS}, and satisfying
\begin{align*}
    &(\phi,\psi)\in L^\infty(\tau,\infty;\mathcal{W}^{2,p}), \\
    &(\delt\phi,\delt\psi)\in L^\infty(\tau,\infty;(\mathcal{H}^1_L)^\prime)\cap L^2_{\mathrm{uloc}}([\tau,\infty);\mathcal{H}^1), \\
    &(\mu,\theta)\in L^\infty(\tau,\infty;\mathcal{H}^1_L)\cap L^4_{\mathrm{uloc}}([\tau,\infty);\mathcal{H}^2), \\
    &(F^\prime(\phi),G^\prime(\psi))\in L^\infty(\tau,\infty;\mathcal{L}^p)
\end{align*}
for any $2 \leq p < \infty$. This finishes the proof.
\end{proof}

As a consequence of Theorem~\ref{Theorem:PropReg}, we can improve the energy inequality \eqref{WEDL:SING} to be an energy equality.
\begin{proposition}
    Suppose the assumptions from Theorem~\ref{Theorem:PropReg} hold, and consider a global weak solution $(\phi,\psi,\mu,\theta)$ that satisfies the propagation of regularity. Then
    \begin{align}\label{Id:ddt:Energy}
        \ddt E(\phi(t),\psi(t)) + \intO m_\Om(\phi)\abs{\Grad\mu}^2\dx + \intG m_\Ga(\psi)\abs{\Gradg\theta}^2\dG + \chi(L) \intG (\beta\theta - \mu)^2\dG = 0
    \end{align}
    for a.e. $t > 0$, and
    \begin{align}\label{Id:Energy:Strong}
        \begin{split}
            &E(\phi(t),\psi(t)) + \int_0^t\intO m_\Om(\phi)\abs{\Grad\mu}^2\dx\ds + \int_0^t\intG m_\Ga(\psi)\abs{\Gradg\theta}^2\dG\ds \\
            &\quad + \chi(L)\int_0^t\intG (\beta\theta - \mu)^2\dG\ds = E(\phi_0,\psi_0) 
        \end{split}
    \end{align}
    for all $t\geq 0$.
\end{proposition}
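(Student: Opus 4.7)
The strategy is to upgrade the energy dissipation inequality \eqref{WEDL:SING} to an equality by exploiting the enhanced regularity furnished by Theorem~\ref{Theorem:PropReg} together with the instantaneous separation property from Theorem~\ref{Theorem:Separation}. Since $K\in(0,\infty]$, the test space $\mathcal{H}^1_K$ coincides with $\mathcal{H}^1$, so every pair in $\mathcal{H}^1$ is an admissible test function for \eqref{WF:MT:SING}, and by construction the space $\mathcal{H}^1_L$ admits $(\mu,\theta)$ as a test function in \eqref{WF:PP:SING}.

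Fix $\tau>0$. By Theorem~\ref{Theorem:PropReg} we have $(\delt\phi,\delt\psi)\in L^2_{\mathrm{uloc}}([\tau,\infty);\mathcal{H}^1)$ and $(\mu,\theta)\in L^\infty(\tau,\infty;\mathcal{H}^1_L)$, while Theorem~\ref{Theorem:Separation} yields $\delta>0$ with $\abs{\phi}\leq 1-\delta$ and $\abs{\psi}\leq 1-\delta$ on $(\tau,\infty)$, so that $F(\phi),F'(\phi),G(\psi),G'(\psi)$ are essentially bounded. The plan is first to test the variational identity \eqref{WF:MT:SING} against $(\delt\phi,\delt\psi)\in\mathcal{H}^1_K$ and to invoke the chain rule
\begin{align*}
\ddt E(\phi,\psi) &= \intO \Grad\phi\cdot\Grad\delt\phi + F'(\phi)\delt\phi\dx + \intG \Gradg\psi\cdot\Gradg\delt\psi + G'(\psi)\delt\psi\dG \\
&\quad + \chi(K)\intG (\alpha\psi-\phi)(\alpha\delt\psi-\delt\phi)\dG,
\end{align*}
which is justified since the separation property makes $F',G'$ bounded while the $L^2_tH^1_x$-regularity of $(\delt\phi,\delt\psi)$ combined with the $L^\infty_tH^1_x$-regularity of $(\phi,\psi)$ legitimizes the gradient-square chain rule. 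This yields $\ddt E(\phi,\psi) = \int_\Om \mu\,\delt\phi\dx + \int_\Ga \theta\,\delt\psi\dG$ almost everywhere on $(\tau,\infty)$. Next, I would test the variational identity \eqref{WF:PP:SING} against $(\mu,\theta)\in\mathcal{H}^1_L$, obtaining
\begin{align*}
\bigang{(\delt\phi,\delt\psi)}{(\mu,\theta)}_{\mathcal{H}^1_L} = -\intO m_\Om(\phi)\abs{\Grad\mu}^2\dx -\intG m_\Ga(\psi)\abs{\Gradg\theta}^2\dG -\chi(L)\intG(\beta\theta-\mu)^2\dG.
\end{align*}
Because $(\delt\phi,\delt\psi)\in\mathcal{L}^2$, the duality product on the left reduces to $\int_\Om \mu\,\delt\phi + \int_\Ga \theta\,\delt\psi$, and combining the two identities delivers \eqref{Id:ddt:Energy} a.e.~on $(\tau,\infty)$.

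Integration of \eqref{Id:ddt:Energy} over $[\tau,t]$ then gives the desired identity \eqref{Id:Energy:Strong} but with $0$ replaced by $\tau$ and $E(\phi_0,\psi_0)$ replaced by $E(\phi(\tau),\psi(\tau))$. To recover the full statement I would pass to the limit $\tau\searrow 0$. The upper bound $\limsup_{\tau\searrow 0} E(\phi(\tau),\psi(\tau))\leq E(\phi_0,\psi_0)$ follows from the energy inequality \eqref{WEDL:SING} evaluated at $t=\tau$, while the lower bound $\liminf_{\tau\searrow 0} E(\phi(\tau),\psi(\tau))\geq E(\phi_0,\psi_0)$ follows from the weak lower semicontinuity of each term in $E$ together with the strong continuity $(\phi,\psi)\in C([0,T];\mathcal{L}^2)$ and Fatou's lemma applied to $F(\phi),G(\psi)$. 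Combining these bounds produces the continuity $E(\phi(\tau),\psi(\tau))\to E(\phi_0,\psi_0)$, and the monotone convergence theorem takes care of the dissipation integrals on $[\tau,t]$, concluding the proof.

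\textbf{Main obstacle.} The only delicate point is the chain-rule step on $[\tau,\infty)$: the singularity of $F,G$ at $\pm 1$ would a priori obstruct differentiating $\int F(\phi)\dx$ along $\delt\phi$, but the instantaneous separation property guarantees that $\phi,\psi$ stay strictly inside $(-1,1)$ after any positive time, so the chain rule reduces to the classical smooth setting. The passage $\tau\searrow 0$ is standard once the two-sided comparison for $E(\phi(\tau),\psi(\tau))$ is in place.
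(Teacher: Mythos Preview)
Your argument is correct and follows the same overall architecture as the paper --- establish \eqref{Id:ddt:Energy} on $(\tau,\infty)$ for arbitrary $\tau>0$, integrate, then close the gap at $t=0$ by the two-sided comparison $\limsup\leq E(\phi_0,\psi_0)\leq\liminf$ --- but the mechanism you use for the chain rule differs from the paper's.

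The paper does \emph{not} appeal to the separation property. Instead it isolates the convex part of the energy,
\[
E_0(\zeta,\xi)=\tfrac12\norm{(\zeta,\xi)}_{K}^2+\intO F_1(\zeta)\dx+\intG G_1(\xi)\dG,
\]
observes that $E_0$ is proper, convex and lower semicontinuous on $\mathcal{L}^2$, and then invokes an abstract chain rule for convex functionals (\cite[Lemma~4.1]{Rocca2004} together with \cite[Proposition~5.2]{Giorgini2025}) to conclude that $t\mapsto E_0(\phi(t),\psi(t))$ is absolutely continuous with derivative $\langle(\delt\phi,\delt\psi),(\mu-F_2'(\phi),\theta-G_2'(\psi))\rangle$. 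The Lipschitz perturbations $F_2,G_2$ are handled separately.

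Your route --- invoke Theorem~\ref{Theorem:Separation} so that $F',G'$ become bounded on the range of $(\phi,\psi)$ for $t\geq\tau$, then apply the classical chain rule using $(\delt\phi,\delt\psi)\in L^2_{\mathrm{uloc}}([\tau,\infty);\mathcal{H}^1)$ --- is more elementary and entirely valid here, since the separation property is available under the assumptions of Theorem~\ref{Theorem:PropReg}. The paper's convex-analysis argument has the advantage of not requiring separation (so it would survive, e.g., in settings where only \ref{Ass:Potentials} holds), while yours avoids the black-box reference and keeps the computation self-contained. The $\tau\searrow0$ step is identical in both proofs.
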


\begin{proof}
	We start by defining the functional $E_0:\mathcal{L}				^2\rightarrow(-\infty,\infty]$ given by
	\begin{align*}
		E_0(\zeta,\xi) \coloneqq \frac12\norm{(\zeta,\xi)}_K^2 + \intO F_1(\zeta)\dx + \intG G_1(\xi)\dG,
	\end{align*}
	where $F_1$ and $G_1$ are the convex parts of the potentials $F$ and $G$ in \ref{Ass:Potentials}, respectively. Then, the functional $E_0$ is proper, convex, and lower-semicontinuous (see, e.g., \cite[Lemma~5.1]{Giorgini2025}). Owing to \cite[Lemma~4.1]{Rocca2004} in combination with \cite[Proposition~5.2]{Giorgini2025} we deduce that $[0,\infty)\ni t\mapsto 				E_0(\phi(t),\psi(t))$ is absolutely continuous and it holds
	\begin{align*}
		\ddt E_0(\phi,\psi) &= \bigang{(\delt\phi,\delt\psi)}{(-\Lap\phi + F_1^\prime(\phi), -\Lapg\psi + G_1^\prime(\psi) + \alpha\deln\phi)}_{\mathcal{H}^1_L} \\
		&= \bigang{(\delt\phi,\delt\psi)}{(\mu - F_2^\prime(\phi), 			\theta - G_2^\prime(\psi))}_{\mathcal{H}^1_L} \\
		&= \intO m_\Om(\phi)\abs{\Grad\mu}^2\dx + \intG m_\Ga(\psi)\abs{\Gradg\theta}^2\dG \\
		&\quad +\chi(L)\intG (\beta\theta - \mu)^2\dG - \intO F_2^\prime(\phi)\delt\phi\dx - \intG G_2^\prime(\psi)\delt\psi\dG
	\end{align*}
	almost everywhere in $(\tau,\infty)$ for any $\tau > 0$. Consequently,
	\begin{align}\label{Id:Energy:tau}
		\ddt E(\phi,\psi) = \intO m_\Om(\phi)\abs{\Grad\mu}^2\dx + \intG m_\Ga(\psi)\abs{\Gradg\theta}^2\dG + \chi(L)\intG (\beta\theta - \mu)^2\dG
	\end{align}
	almost everywhere in $(\tau,\infty)$. Since $\tau > 0$ was arbitrary, we readily obtain \eqref{Id:ddt:Energy}. Then, integrating \eqref{Id:Energy:tau} over $(s,t)$ for $s,t > \tau$ with $s \leq t$, we infer
	\begin{align}\label{EnergyID:t-s}
        \begin{split}
    		&E(\phi(t),\psi(t)) - E(\phi(s),\psi(s)) \\
            &\quad = \int_s^t \intO m_\Om(\phi)\abs{\Grad\mu}^2\dxs + \int_s^t\intG m_\Ga(\psi)\abs{\Gradg\theta}^2\dGs + \chi(L)\int_s^t\intG(\beta\theta - \mu)^2\dGs.
        \end{split}
	\end{align}
    It follows from \eqref{WEDL:SING} that $\limsup_{s\rightarrow 0}E(\phi(s),\psi(s)) \leq E(\phi(0),\psi(0))$. On the other hand, by weak lower-semicontinuity of norms and Lebesgue's dominated convergence theorem, we have $\liminf_{s\rightarrow 0}E(\phi(s),\psi(s)) \\ \geq E(\phi(0),\psi(0))$. As a result, it holds that $\lim_{s\rightarrow 0} E(\phi(s),\psi(s)) = E(\phi(0),\psi(0))$. This allows us to pass to the limit $s\rightarrow 0$ in \eqref{EnergyID:t-s} and conclude the energy identity \eqref{Id:Energy:Strong}. 
\end{proof}

Now, we are in a position to present the proof of Theorem~\ref{Theorem:Separation}. 

\begin{proof}[Proof of Theorem~\ref{Theorem:Separation}.]
    \textbf{The case with~\ref{Ass:Potentials:Sep:1}.} Let $(\phi,\psi,\mu,\theta)$ be a weak solution to \eqref{EQ:SYSTEM} that exhibits the propagation of regularity. Then, we have already seen in the proof of Theorem~\ref{Theorem:PropReg} that
    \begin{align*}
        F_1^{\prime\prime}(\phi)\in L^\infty(\tau,\infty;L^p(\Om))
    \end{align*}
    for any $2 \leq p < \infty$ (see, e.g., \cite[Theorem~3.11]{Knopf2025}). Analogously, one can show that
    \begin{align*}
        F_1^{\prime\prime}(\psi)\in L^\infty(\tau,\infty;L^p(\Ga))
    \end{align*}
    for all $2 \leq p < \infty$ (see, e.g., \cite[Theorem~2.2]{Lv2024b}). On the other hand, since $(\phi,\psi)\in L^\infty(\tau,\infty;\mathcal{W}^{2,p})$ and $(F_1^\prime(\phi),F_1^\prime(\psi))\in L^\infty(\tau,\infty;\mathcal{L}^p)$ for any $2 \leq p < \infty$, it holds that
    \begin{align*}
        \sup_{t\geq\tau} \norm{F_1^\prime(\phi(t))}_{W^{1,3}(\Om)} + \sup_{t\geq\tau}\norm{F_1^\prime(\psi(t))}_{W^{1,3}(\Ga)} \leq C.
    \end{align*}
    As $d = 2$, we have the Sobolev embeddings $W^{1,3}(\Om)\emb C(\overline\Om)$ and $W^{1,3}(\Ga)\emb C(\Ga)$, and deduce that
    \begin{align*}
        \sup_{t\geq\tau} \norm{F_1^\prime(\phi(t))}_{L^\infty(\Om)} + \sup_{t\geq\tau}\norm{F_1^\prime(\psi(t))}_{L^\infty(\Ga)} \leq C \eqqcolon C_\ast.
    \end{align*}
    Thus, taking
    \begin{align*}
        \delta = 1 - (F_1^\prime)^{-1}(C_\ast),
    \end{align*}
    we arrive at the conclusion \eqref{Separation:tau}.

    \textbf{The case with~\ref{Ass:Potentials:Sep:2}.} 
    In this case, we exploit the dissipative structure of \eqref{EQ:SYSTEM} in combination with a De Giorgi-type iteration scheme, following the recent approach developed in \cite{Gal2025}. This method has already been successfully adapted to dynamic boundary conditions in the case of constant mobility functions, see, for instance, \cite{Lv2024a, Lv2024b}. The extension to the setting with a non-degenerate mobility is straightforward and follows along similar lines. For brevity, we omit the details and refer the interested reader to the aforementioned works.
\end{proof}

As a direct consequence of the separation property \eqref{Separation:tau} and regularity theory for elliptic systems with bulk-surface coupling (see, e.g., \cite[Theorem~3.3]{Knopf2021}, we can prove further regularity of the phase fields.

\begin{corollary}
    Let the assumptions from Theorem~\ref{Theorem:Separation} hold, and consider a weak solution of \eqref{EQ:SYSTEM} that satisfies the propagation of regularity. Then we have $(\phi,\psi)\in L^\infty(\tau,\infty;\mathcal{H}^3)$.
\end{corollary}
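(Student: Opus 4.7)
The plan is to regard $(\phi,\psi)$ as the solution of a linear elliptic bulk--surface system of the form
\begin{align*}
-\Lap\phi &= \mu - F^\prime(\phi) &&\text{in }\Omega,\\
-\Lapg\psi + \alpha\deln\phi &= \theta - G^\prime(\psi) &&\text{on }\Gamma,\\
K\deln\phi &= \alpha\psi - \phi &&\text{on }\Gamma,
\end{align*}
which is satisfied almost everywhere on $(\tau,\infty)$ by Theorem~\ref{Theorem:PropReg}, and then to bootstrap the regularity of the right-hand side from $\mathcal{L}^2$ to $\mathcal{H}^1$ using the separation property. Once the data lie in $\mathcal{H}^1$, the $\mathcal{H}^3$-regularity follows from the elliptic regularity theory for systems with bulk--surface coupling (see, e.g., \cite[Theorem~3.3]{Knopf2021} or \cite[Corollary~5.4]{Giorgini2025}, applied after splitting $F^\prime = F_1^\prime + F_2^\prime$ and $G^\prime = G_1^\prime + G_2^\prime$).

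The first step is to exploit the separation property \eqref{Separation:tau} from Theorem~\ref{Theorem:Separation}: since $\abs{\phi(t,\cdot)}\leq 1-\delta$ in $\Omega$ and $\abs{\psi(t,\cdot)}\leq 1-\delta$ on $\Gamma$ for all $t\geq\tau$, the functions $F_1^\prime, F_1^{\prime\prime}, G_1^\prime, G_1^{\prime\prime}$ are continuous on the compact interval $[-1+\delta,1-\delta]$ and therefore uniformly bounded along the trajectory. Combined with the Lipschitz continuity of $F_2^\prime, G_2^\prime$, this yields a uniform $L^\infty$-bound on $F^{\prime\prime}(\phi)$ and $G^{\prime\prime}(\psi)$ on $[\tau,\infty)$.

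In the second step, I use the already established regularity $(\phi,\psi)\in L^\infty(\tau,\infty;\mathcal{W}^{2,p})$ for any $2\leq p<\infty$ together with the two-dimensional Sobolev embedding $W^{2,p}\emb W^{1,\infty}$ (valid on both $\Om$ and $\Ga$) to obtain $\Grad\phi\in L^\infty(\tau,\infty;L^\infty(\Om))$ and $\Gradg\psi\in L^\infty(\tau,\infty;L^\infty(\Ga))$. The chain rule then gives
\begin{align*}
\Grad F^\prime(\phi) = F^{\prime\prime}(\phi)\Grad\phi,\qquad \Gradg G^\prime(\psi) = G^{\prime\prime}(\psi)\Gradg\psi,
\end{align*}
both of which belong to $L^\infty(\tau,\infty;L^\infty)$, so that $(F^\prime(\phi),G^\prime(\psi))\in L^\infty(\tau,\infty;\mathcal{H}^1)$ uniformly. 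Since $(\mu,\theta)\in L^\infty(\tau,\infty;\mathcal{H}^1_L)$ by Theorem~\ref{Theorem:PropReg}, the right-hand sides of the bulk and surface equations above lie uniformly in $\mathcal{H}^1$.

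The final step is to apply elliptic regularity for the constant-coefficient bulk--surface system with the boundary condition $K\deln\phi = \alpha\psi-\phi$ (noting that $K\in(0,\infty]$). Standard bootstrapping (using the trace theorem $H^3(\Om)\vert_\Ga\emb H^{5/2}(\Ga)$ and $H^3(\Ga)\vert$ for the coupling term) promotes $\mathcal{H}^2$-regularity to $\mathcal{H}^3$ once the data lie in $\mathcal{H}^1$ and the coupling is treated iteratively. The main, albeit minor, obstacle is to verify that the constants in these elliptic estimates can be made uniform in $t\geq\tau$; but this is immediate from the fact that all intervening norms in the previous step are bounded uniformly in time. Taking the essential supremum in $t$ yields $(\phi,\psi)\in L^\infty(\tau,\infty;\mathcal{H}^3)$, as claimed.
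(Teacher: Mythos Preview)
Your proposal is correct and follows essentially the same approach as the paper: both use the separation property together with $(\phi,\psi)\in L^\infty(\tau,\infty;\mathcal{W}^{2,p})$ to show $(F^\prime(\phi),G^\prime(\psi))\in L^\infty(\tau,\infty;\mathcal{H}^1)$, and then invoke elliptic regularity for the constant-coefficient bulk--surface system (\cite[Theorem~3.3]{Knopf2021}) with data $(\mu - F^\prime(\phi),\theta - G^\prime(\psi))\in L^\infty(\tau,\infty;\mathcal{H}^1)$. Your write-up is slightly more explicit about the chain rule and the Sobolev embedding, but the argument is the same.
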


\begin{proof}
    First, recall that
    \begin{alignat*}{2}
        -\Lap\phi(t) &= \mu(t) - F^\prime(\phi(t)) &&\qquad\text{a.e.~in~}\Om, \\
        -\Lapg\psi(t) + \alpha\deln\phi(t) &= \theta(t) - G^\prime(\psi(t)) &&\qquad\text{a.e.~on~}\Ga, \\
        K\deln\phi(t) &= \alpha\psi(t) - \phi(t) &&\qquad\text{a.e.~on~}\Ga
    \end{alignat*}
    for almost all $t\geq\tau > 0$. Then, from the separation property \eqref{Separation:tau}, the Lipschitz continuity of $F_2^\prime$ and $G_2^\prime$, respectively, and the fact that $(\phi,\psi)\in L^\infty(\tau,\infty;\mathcal{W}^{2,p})$ for any $\tau > 0$, we readily deduce that
    \begin{align*}
        \sup_{t\geq\tau}\norm{(F^\prime(\phi(t)),G^\prime(\psi(t)))}_{\mathcal{H}^1} \leq C
    \end{align*}
    for any $\tau > 0$. As additionally $(\mu,\theta)\in L^\infty(\tau,\infty;\mathcal{H}^1)$, the claim follows from elliptic regularity theory (see, e.g., \cite[Theorem~3.3]{Knopf2021}).
\end{proof}

\medskip
\section{Convergence to Equilibrium}\label{Section:LongTime}

The following argument is inspired by the approach developed in \cite{Abels2007} and its subsequent extension to the Cahn--Hilliard system with dynamic boundary conditions, see, for instance, \cite{Fukao2021, Lv2024a, Lv2024b}.

Let
\begin{subequations}\label{Assumption:mean}
\begin{align}\label{Assumption:mean:L}
    m\in\R \quad\text{with}\quad \beta m,m\in(-1,1)\quad\text{if~} L\in[0,\infty),
\end{align}
and
\begin{align}\label{Assumption:mean:infty}
    m = (m_1,m_2)\in\R^2\quad\text{with}\quad m_1,m_2\in(-1,1)\quad\text{if~} L = \infty.
\end{align}
\end{subequations}
We define the phase space
\begin{align*}
    \mathcal{Z}_m^{K,L} = \{(\phi,\psi)\in\mathcal{W}^1_{K,L,m} : E(\phi,\psi) < \infty\},
\end{align*}
equipped with the metric
\begin{align*}
    \mathrm{d}_{\mathcal{Z}_m^{K,L}}\big((\phi,\psi),(\zeta,\xi)\big) &\coloneqq \norm{(\phi-\zeta,\psi-\xi)}_{K} + \bigg\vert\intO F_1(\phi) \dx - \intO F_1(\zeta)\dx\bigg\vert^{\frac12} \\
    &\quad + \bigg\vert\intG G_1(\psi)\dG - \intG G_1(\xi)\dG\bigg\vert^{\frac12} \qquad\text{for all~}(\phi,\psi), (\zeta,\xi)\in\mathcal{Z}_m^{K,L}.
\end{align*}
Thus, $\big(\mathcal{Z}_m^{K,L},\mathrm{d}_{\mathcal{Z}_m^{K,L}}\big)$ is a complete metric space. We then have the following conclusion, which follows from Theorem~\ref{Theorem} and is a straightforward extension of \cite[Proposition~4.1]{Fukao2021} (see also \cite[Proposition~4.1]{Lv2024a}).

\begin{proposition}
    Suppose that the assumptions from Theorem~\ref{Theorem:LongTime} hold.
    Then, the system \eqref{EQ:SYSTEM} defines a strongly continuous semigroup $\mathcal{S}^{K,L}:\mathcal{Z}_m^{K,L}\rightarrow\mathcal{Z}_m^{K,L}$ such that
    \begin{align*}
        \mathcal{S}^{K,L}(t)(\phi_0,\psi_0) = (\phi(t),\psi(t)) \qquad\text{for all~}t\geq 0,
    \end{align*}
    where $(\phi,\psi)$ is the unique global weak solution of \eqref{EQ:SYSTEM} subject to the initial datum $(\phi_0,\psi_0)\in\mathcal{Z}_m^{K,L}$. Moreover, $\mathcal{S}^{K,L}\in C(\mathcal{Z}_m^{K,L},\mathcal{Z}_m^{K,L})$.
\end{proposition}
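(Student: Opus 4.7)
The plan is to verify four properties in order: (i) $\mathcal{S}^{K,L}(t)$ is well-defined as a map $\mathcal{Z}_m^{K,L}\to\mathcal{Z}_m^{K,L}$; (ii) it satisfies the semigroup axioms; (iii) the orbit $t\mapsto \mathcal{S}^{K,L}(t)(\phi_0,\psi_0)$ is continuous into $\mathcal{Z}_m^{K,L}$; (iv) the map $\mathcal{S}^{K,L}(t)$ depends continuously on initial data.

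For (i), given $(\phi_0,\psi_0)\in\mathcal{Z}_m^{K,L}$, Theorem~\ref{Theorem} produces a unique global weak solution $(\phi,\psi,\mu,\theta)$. The mass conservation law~\eqref{MCL:SING} shows that $(\phi(t),\psi(t))\in\mathcal{W}_{K,L,m}$ for every $t\geq0$, while the energy inequality~\eqref{WEDL:SING} yields $E(\phi(t),\psi(t))\leq E(\phi_0,\psi_0)<\infty$, placing the image in $\mathcal{Z}_m^{K,L}$. For (ii), $\mathcal{S}^{K,L}(0)=\mathrm{Id}$ is immediate from the attainment of the initial datum, while the identity $\mathcal{S}^{K,L}(t+s)=\mathcal{S}^{K,L}(t)\circ\mathcal{S}^{K,L}(s)$ follows by restarting the solution at time $s$ and invoking uniqueness from Theorem~\ref{Theorem}(II).

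For (iii), I would exploit the energy equality~\eqref{Id:Energy:Strong}, which holds under the stronger assumptions of Theorem~\ref{Theorem:LongTime} (in particular $m_\Om,m_\Ga\in C^2([-1,1])$, which guarantees the propagation of regularity and hence the energy identity). The regularity $(\phi,\psi)\in C([0,\infty);\mathcal{L}^2)\cap L^\infty(0,\infty;\mathcal{H}^1_K)$ first gives weak continuity $(\phi(t),\psi(t))\rightharpoonup(\phi(t_0),\psi(t_0))$ in $\mathcal{H}^1_K$ as $t\to t_0$. Next, the absolute continuity of the dissipation integral in~\eqref{Id:Energy:Strong} yields $E(\phi(t),\psi(t))\to E(\phi(t_0),\psi(t_0))$. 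Since $F_1,G_1$ are convex and bounded below, the functionals $\int_\Omega F_1(\cdot)\,\dx$ and $\int_\Gamma G_1(\cdot)\,\dG$ are weakly lower semicontinuous on $\mathcal{L}^2$, and combined with the lower semicontinuity of $\|\cdot\|_K^2$, the convergence of the total energy forces simultaneous convergence of each individual summand: $\|(\phi(t),\psi(t))\|_K\to\|(\phi(t_0),\psi(t_0))\|_K$ and likewise for the two potential integrals. In a Hilbert space, weak convergence plus norm convergence gives strong convergence, so $(\phi(t),\psi(t))\to(\phi(t_0),\psi(t_0))$ in $\mathcal{H}^1_K$, and the three pieces of the metric $\mathrm{d}_{\mathcal{Z}_m^{K,L}}$ each tend to zero.

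For (iv), this is the main technical step. Given a sequence $(\phi_n^0,\psi_n^0)\to(\phi_0,\psi_0)$ in $\mathcal{Z}_m^{K,L}$, convergence of the metric pieces implies $E(\phi_n^0,\psi_n^0)\to E(\phi_0,\psi_0)$ and convergence in $\mathcal{L}^2\hookrightarrow(\mathcal{H}^1_L)'$. Theorem~\ref{Theorem}(II) then yields $\mathcal{S}^{K,L}(t)(\phi_n^0,\psi_n^0)\to\mathcal{S}^{K,L}(t)(\phi_0,\psi_0)$ in $(\mathcal{H}^1_L)'$, uniformly on compact time intervals. The uniform energy bound provides weak compactness in $\mathcal{H}^1_K$, and the $(\mathcal{H}^1_L)'$-limit identifies the weak limit as the orbit of $(\phi_0,\psi_0)$. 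To upgrade to strong convergence in $\mathcal{Z}_m^{K,L}$, I would combine the energy identity~\eqref{Id:Energy:Strong} applied to each $n$ with lower semicontinuity of the dissipation functional (a sum of $L^2$-norms squared of $\sqrt{m_\Om(\phi_n)}\Grad\mu_n$, $\sqrt{m_\Ga(\psi_n)}\Gradg\theta_n$, and $\chi(L)(\beta\theta_n-\mu_n)$, all of which pass to the limit by Fatou on the weak limits obtained from the energy bound). This gives $\limsup_n E(\phi_n(t),\psi_n(t))\leq E(\phi(t),\psi(t))$, and lower semicontinuity supplies the reverse inequality for the liminf, producing $E(\phi_n(t),\psi_n(t))\to E(\phi(t),\psi(t))$. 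Repeating the weak-plus-norm-to-strong argument from step (iii), I obtain convergence in each component of $\mathrm{d}_{\mathcal{Z}_m^{K,L}}$. The most delicate point will be passing to the limit in the dissipation terms involving the $\phi_n$-dependent mobilities $m_\Om(\phi_n)$ and $m_\Ga(\psi_n)$, where the standard trick is to rewrite $m_\Om(\phi_n)|\Grad\mu_n|^2=|\sqrt{m_\Om(\phi_n)}\Grad\mu_n|^2$ and use that $\sqrt{m_\Om(\phi_n)}\to\sqrt{m_\Om(\phi)}$ strongly (by uniform positivity of $m_\Om$ and a.e.~convergence of $\phi_n$) to identify the weak $L^2$-limit of the product with $\sqrt{m_\Om(\phi)}\Grad\mu$.
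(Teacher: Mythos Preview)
The paper does not give a proof of this proposition; it simply states that the result ``follows from Theorem~\ref{Theorem} and is a straightforward extension of \cite[Proposition~4.1]{Fukao2021} (see also \cite[Proposition~4.1]{Lv2024a}).'' Your proposal is exactly the kind of argument one finds in those references, so in that sense your approach is consistent with what the paper intends.

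Your sketch is essentially correct. A few places would need a little more care if written out in full. In step~(iv), to identify the weak limit of $\sqrt{m_\Om(\phi_n)}\,\Grad\mu_n$ you need a.e.\ convergence of $\phi_n$ in $Q_T$, which is not immediate from the $(\mathcal{H}^1_L)'$-convergence alone; it comes from the uniform bounds in $L^4_{\mathrm{uloc}}([0,\infty);\mathcal{H}^2)$ and $H^1(0,T;(\mathcal{H}^1_L)')$ via Aubin--Lions (strong convergence in $L^2(0,T;\mathcal{H}^1)$ along a subsequence, then the whole sequence since the limit is uniquely determined). Similarly, you need weak convergence of $(\Grad\mu_n,\Gradg\theta_n)$ in $L^2(Q_T)\times L^2(\Sigma_T)$, which follows from the uniform energy bound. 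Also note that $\norm{\cdot}_K$ is only a seminorm on $\mathcal{H}^1_K$, but since all elements of $\mathcal{Z}_m^{K,L}$ share the same generalized mean, differences lie in $\mathcal{V}^1_K$ where the bulk-surface Poincar\'e inequality (Lemma~\ref{Prelim:Poincare}) makes $\norm{\cdot}_K$ equivalent to the full $\mathcal{H}^1$-norm, so your weak-plus-norm argument goes through. Finally, the constant $C$ in the continuous dependence estimate of Theorem~\ref{Theorem}(II) depends on the initial energies, but these are uniformly bounded along the sequence because $\mathrm{d}_{\mathcal{Z}_m^{K,L}}$-convergence entails $E(\phi_n^0,\psi_n^0)\to E(\phi_0,\psi_0)$.
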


Next, we define the $\omega$-limit set
\begin{align*}
    \omega^{K,L}(\phi_0, \psi_0) \coloneqq \Bigg\{(\phi_\infty, \psi_\infty)\in\mathcal{H}^2\cap\mathcal{Z}_m^{K,L} \Bigg\vert
    \begin{aligned}
        &\exists(t_n)_{n\in\N}\subset\R_{\geq 0} \text{~with~} t_n\rightarrow\infty\text{~such that~} \\
        &\mathcal{S}^{K,L}_m(t_n)(\phi_0,\psi_0)\rightarrow (\phi_\infty, \psi_\infty) \text{~in~}\mathcal{H}^2 \ \text{as~}n\rightarrow\infty
    \end{aligned}
    \Bigg\}.
\end{align*}

Let $(\phi_0,\psi_0)\in\mathcal{Z}_m^{K,L}$ and consider the unique global weak solution to \eqref{EQ:SYSTEM} departing from $(\phi_0,\psi_0)$. Then, since for any $\tau > 0$ it holds that $(\phi,\psi)\in L^\infty(\tau,\infty;\mathcal{H}^3)$ as well as $(\delt\phi,\delt\psi)\in L^2_{\mathrm{uloc}}([\tau,\infty);\mathcal{H}^1)$, we find from the Aubin--Lions--Simon lemma that $(\phi,\psi)\in C([t,t+1];\mathcal{H}^2)$ for all $t\geq\tau > 0$. Hence, it holds that
\begin{align*}
    (\phi,\psi)\in BC([\tau,\infty);\mathcal{H}^s)
\end{align*}
for any $s\in(2,3)$ and for any $\tau > 0$. It then follows that the $\omega$-limit set $\omega^{K,L}(\phi_0,\psi_0)$ is non-empty, compact and connected in $\mathcal{H}^2$ (see, e.g., \cite[Theorem~9.1.8]{Cazenave1998}), and we have
\begin{align}\label{CompactnessOrbit}
    \lim_{t\rightarrow\infty} \mathrm{dist}_{\mathcal{H}^2}(\mathcal{S}^{K,L}(t)(\phi_0,\psi_0),\omega^{K,L}(\phi_0,\psi_0)) = 0.
\end{align}

Additionally, as $E:\mathcal{Z}_m^{K,L}\rightarrow\R$ serves as a strict Lyapunov functional for the strongly continuous semigroup $\mathcal{S}^{K,L}$, we observe that every $(\phi_\infty,\psi_\infty)\in\omega^{K,L}(\phi_0,\psi_0)$ is a stationary point of $\{\mathcal{S}^{K,L}(t)\}_{t\geq 0}$, that is, $\mathcal{S}^{K,L}(t)(\phi_\infty,\psi_\infty) = (\phi_\infty,\psi_\infty)$ for all $t\geq 0$. Denoting the corresponding bulk and surface chemical potentials by $\mu_\infty$ and $\theta_\infty$, respectively, we find that $(\phi_\infty,\psi_\infty,\mu_\infty,\theta_\infty)$ can be regarded as a global weak solution of
\begin{subequations}
    \begin{align*}
        &\delt\phi_\infty = \Div(m_\Om(\phi_\infty)\Grad\mu_\infty) && \text{in~} \Om\times(0,\infty), \\
        &\mu = -\Lap\phi_\infty + F'(\phi_\infty)   && \text{in~} \Om\times(0,\infty), \\
        &\delt\psi_\infty = \Divg(m_\Ga(\psi_\infty)\Gradg\theta_\infty) - \beta m_\Om(\phi_\infty)\deln\mu_\infty && \text{on~} \Ga\times(0,\infty), \\
        &\theta_\infty = - \Lapg\psi_\infty + G'(\psi_\infty) + \alpha\deln\phi_\infty && \text{on~} \Ga\times(0,\infty), \\
        &\begin{cases} K\deln\phi_\infty = \alpha\psi_\infty - \phi_\infty &\text{if~} K\in (0,\infty), \\
        \deln\phi_\infty = 0 &\text{if~} K = \infty
        \end{cases} && \text{on~} \Ga\times(0,\infty), \\
        &\begin{cases} 
        L m_\Om(\phi_\infty)\deln\mu_\infty = \beta\theta_\infty - \mu_\infty &\text{if~} L\in[0,\infty), \\
        m_\Om(\phi_\infty)\deln\mu_\infty = 0 &\text{if~} L=\infty
        \end{cases} &&\text{on~} \Ga\times(0,\infty), \\
        &\phi_\infty\vert_{t=0} = \phi_0 &&\text{in~} \Om, \\
        &\psi_\infty\vert_{t=0} = \psi_0 &&\text{on~} \Ga.
    \end{align*}
\end{subequations}

In light of the regularity properties proven in Theorem~\ref{Theorem:PropReg}, $(\phi_\infty,\psi_\infty,\mu_\infty,\theta_\infty)$ is actually a strong solution to the stationary problem
\begin{subequations}\label{EQ:SYSTEM:STATIONARY}
    \begin{align}
        \label{EQ:SYSTEM:STATIONARY:1}
        &\Div(m_\Om(\phi_\infty)\Grad\mu_\infty) = 0 &&\text{in~}\Om, \\
        \label{EQ:SYSTEM:STATIONARY:2}
        &\mu_\infty = -\Lap\phi_\infty + F^\prime(\phi_\infty) &&\text{in~}\Om, \\
        \label{EQ:SYSTEM:STATIONARY:3}
        &\Divg(m_\Ga(\psi_\infty)\Gradg\theta_\infty) - \beta m_\Om(\phi_\infty)\deln\mu_\infty = 0 &&\text{on~}\Ga, \\
        \label{EQ:SYSTEM:STATIONARY:4}
        &\theta_\infty = -\Lapg\psi_\infty + G^\prime(\psi_\infty) + \alpha\deln\phi_\infty &&\text{on~}\Ga, \\
        \label{EQ:SYSTEM:STATIONARY:5}
        &\begin{cases} K\deln\phi_\infty = \alpha\psi_\infty - \phi_\infty &\text{if} \ K\in (0,\infty), \\
        \deln\phi_\infty = 0 &\text{if} \ K = \infty
        \end{cases} && \text{on~} \Ga, \\
        \label{EQ:SYSTEM:STATIONARY:6}
        &\begin{cases} 
        L m_\Om(\phi_\infty)\deln\mu_\infty = \beta\theta_\infty - \mu_\infty &\text{if~} L\in[0,\infty), \\
        m_\Om(\phi_\infty)\deln\mu_\infty = 0 &\text{if~} L=\infty
        \end{cases} &&\text{on~} \Ga.
    \end{align}
\end{subequations}
Multiplying \eqref{EQ:SYSTEM:STATIONARY:1} with $\mu_\infty$ and \eqref{EQ:SYSTEM:STATIONARY:3} with $\theta_\infty$, integrating over $\Om$ and $\Ga$, respectively, adding the resulting equations and using the boundary condition \eqref{EQ:SYSTEM:STATIONARY:6}, we obtain
\begin{align*}
    \intO m_\Om(\phi_\infty)\abs{\Grad\mu_\infty}^2\dx + \intG m_\Ga(\psi_\infty)\abs{\Gradg\theta_\infty}^2\dG + \chi(L)\intG (\beta\theta_\infty - \mu_\infty)^2\dG = 0.
\end{align*}
Thus, for all $L\in[0,\infty]$ we infer that $\mu_\infty$ and $\theta_\infty$ are both constant. Furthermore, if $L\in[0,\infty)$, we can conclude that $\beta\theta_\infty = \mu_\infty$. Then, multiplying \eqref{EQ:SYSTEM:STATIONARY:2} with $\alpha$ and integrating over $\Om$, and integrating \eqref{EQ:SYSTEM:STATIONARY:4} over $\Ga$, we get
\begin{align}\label{STAT:MT:L}
    \mu_\infty = \beta\theta_\infty = \frac{\alpha}{\alpha\beta\abs{\Om} + \abs{\Ga}}\Big(\alpha\intO F^\prime(\phi_\infty)\dx + \intG G^\prime(\psi_\infty)\dG\Big)
\end{align}
if $L\in[0,\infty)$. If $L = \infty$, we find instead
\begin{align}\label{STAT:MT:infty}
    \begin{split}
        \mu_\infty &= \frac{1}{\abs{\Om}} \Big(\intO F^\prime(\phi_\infty)\dx - \intG \deln\phi_\infty\dG \Big), \\
        \theta_\infty &= \frac{1}{\abs{\Ga}}\Big(\intG G^\prime(\psi_\infty) + \alpha\deln\phi_\infty\dG \Big).
    \end{split}
\end{align}
Consequently the stationary problem \eqref{EQ:SYSTEM:STATIONARY} reduces to
\begin{align*}
    &\mu_\infty = -\Lap\phi_\infty + F^\prime(\phi_\infty) &&\text{in~}\Om, \\
    &\theta_\infty = -\Lapg\psi_\infty + G^\prime(\psi_\infty) + \alpha\deln\phi_\infty &&\text{on~}\Ga, \\
    &\begin{cases} K\deln\phi_\infty = \alpha\psi_\infty - \phi_\infty &\text{if} \ K\in (0,\infty), \\
    \deln\phi_\infty = 0 &\text{if} \ K = \infty
    \end{cases} && \text{on~} \Ga,
\end{align*}
with $\mu_\infty$ and $\theta_\infty$ given by \eqref{STAT:MT:L} or \eqref{STAT:MT:infty} depending on the value of $L\in[0,\infty]$.

Finally, we learn from \cite[Theorems~9.2.3 and 9.2.7]{Cazenave1998} that
\begin{align}\label{E_infty}
    E_\infty = \lim_{t\rightarrow\infty} E(\phi(t),\psi(t)) \quad \text{exists, and~}\quad E(\phi_\infty,\psi_\infty) = E_\infty \quad \text{for all~}(\phi_\infty,\psi_\infty)\in\omega^{K,L}(\phi_0,\psi_0).
\end{align}

Now, to prove that the $\omega$-limit set $\omega^{K,L}(\phi_0,\psi_0)$ is a singleton, we apply the {\L}ojasiewicz--Simon approach, see, for instance, \cite{Abels2007, Fukao2021}. The main tool is the following extended {\L}ojasiewicz--Simon inequality.

\begin{lemma}[{\L}ojasiewicz--Simon inequality]\label{Lemma:LS} 
Suppose that the assumptions from Theorem~\ref{Theorem:LongTime} are satisfied. In addition, assume that $F_1, G_1$ are real analytic functions on $(-1,1)$, and $F_2, G_2$ are real analytic functions on $\R$. Let $(\phi_\infty, \psi_\infty)\in\omega^{K,L}(\phi_0,\psi_0)$. Then, there exist constants $\varpi\in (0,\frac12)$, $b > 0$, and $C>0$, such that
\begin{align}
   C \left\|\mathbf{P}_{L}\begin{pmatrix}
        -\Lap\zeta + F^\prime(\zeta) \\
        -\Lapg\xi + G^\prime(\xi) + \alpha\deln\zeta
    \end{pmatrix}\right\|_{\mathcal{L}^2} \geq \abs{E(\zeta, \xi) - E(\phi_\infty, \psi_\infty)}^{1 - \varpi}
\end{align}
for all $(\zeta,\xi)\in\mathcal{H}^2\cap\mathcal{W}^1_{K,L,m}$ satisfying $\norm{(\zeta - \phi_\infty, \xi - \psi_\infty)}_{\mathcal{H}^2}\leq b$.
Here, $\mathbf{P}_{L}$ denotes the projection of $\mathcal{L}^2$ onto
\begin{equation*}
    \begin{cases} 
        \{\scp{\phi}{\psi}\in\mathcal{L}^2 : \mean{\phi}{\psi} = 0 \}, &\text{if~} L\in[0,\infty), \\
        \{\scp{\phi}{\psi}\in\mathcal{L}^2: \meano{\phi} = \meang{\psi} = 0 \}, &\text{if~}L=\infty.
        \end{cases} 
\end{equation*}
\end{lemma}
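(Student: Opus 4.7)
The plan is to adapt the abstract {\L}ojasiewicz--Simon framework of Chill (or the version in \cite{Abels2007}) to the constrained bulk-surface functional $E$, by verifying analyticity and a Fredholm property for the Hessian on an appropriate subspace determined by the mass constraint and the boundary coupling. First I would reduce to a neighborhood on which the singular parts of the potentials are harmless. Since $(\phi_\infty,\psi_\infty)\in\omega^{K,L}(\phi_0,\psi_0)$, by the instantaneous separation property (Theorem~\ref{Theorem:Separation}) there exists $\delta_\infty\in(0,1)$ with $\|\phi_\infty\|_{L^\infty(\Om)},\|\psi_\infty\|_{L^\infty(\Ga)}\le 1-\delta_\infty$. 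Using the two-dimensional embedding $\mathcal{H}^2\hookrightarrow \mathcal{C}(\overline\Om)\times \mathcal{C}(\Ga)$, I can choose the radius $b>0$ so small that every $(\zeta,\xi)$ with $\|(\zeta-\phi_\infty,\xi-\psi_\infty)\|_{\mathcal{H}^2}\le b$ satisfies $\|\zeta\|_{L^\infty(\Om)},\|\xi\|_{L^\infty(\Ga)}\le 1-\delta_\infty/2$. On this neighborhood, $F_1, G_1$ are real analytic in their arguments (together with the already analytic $F_2,G_2$), which combined with the quadratic gradient terms and the quadratic coupling $\chi(K)|\alpha\psi-\phi|^2$ yields that $E:\mathcal{U}\to\R$ is a real analytic functional on an open $\mathcal{H}^2$-neighborhood $\mathcal{U}$ of $(\phi_\infty,\psi_\infty)$.

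Next I would interpret the constrained functional. Let
\[
\mathcal{X}\coloneqq\begin{cases} \{(\zeta,\xi)\in\mathcal{H}^2\cap\mathcal{H}^1_K : \mean{\zeta}{\xi}=0\} & \text{if }L\in[0,\infty),\\ \{(\zeta,\xi)\in\mathcal{H}^2\cap\mathcal{H}^1_K : \meano{\zeta}=0,\ \meang{\xi}=0\} & \text{if }L=\infty,\end{cases}
\]
together with $\mathcal{Y}\coloneqq\mathbf{P}_L\mathcal{L}^2$, so that $\mathcal{X}\hookrightarrow\mathcal{Y}$ is dense and compact. By the compatibility conditions encoded in $\mathcal{H}^1_K$ and in the mass constraint, the map
\[
\mathcal{M}:\mathcal{X}\to\mathcal{Y},\qquad \mathcal{M}(\zeta,\xi)\coloneqq\mathbf{P}_{L}\bigl(-\Lap\zeta+F'(\zeta),\;-\Lapg\xi+G'(\xi)+\alpha\deln\zeta\bigr),
\]
is well-defined, real analytic in a neighborhood of $(\phi_\infty,\psi_\infty)$, and coincides with the $\mathcal{Y}$-gradient of the restriction of $E$ to the affine subspace $\mathcal{W}^1_{K,L,m}\cap\mathcal{H}^2$. (The stationary problem \eqref{EQ:SYSTEM:STATIONARY} with the formulas \eqref{STAT:MT:L}--\eqref{STAT:MT:infty} shows exactly that $\mathcal{M}(\phi_\infty,\psi_\infty)=0$, identifying $(\phi_\infty,\psi_\infty)$ as a critical point.)

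The main obstacle is then verifying the Fredholm property of the linearization
\[
\mathcal{L}\coloneqq D\mathcal{M}(\phi_\infty,\psi_\infty):\mathcal{X}\to\mathcal{Y},\qquad \mathcal{L}(\eta,\vartheta)=\mathbf{P}_L\bigl(-\Lap\eta+F''(\phi_\infty)\eta,\;-\Lapg\vartheta+G''(\psi_\infty)\vartheta+\alpha\deln\eta\bigr).
\]
To handle this, I would write $\mathcal{L}=\mathcal{L}_0+\mathcal{K}$, where $\mathcal{L}_0$ is the principal elliptic part (the bulk-surface Laplacian with the coupling boundary condition encoded in $\mathcal{H}^1_K$), and $\mathcal{K}$ is a multiplication operator involving $F''(\phi_\infty),G''(\psi_\infty)\in L^\infty$ thanks to the separation property. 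By the well-posedness and $\mathcal{H}^2$-regularity theory established in Section~\ref{Section:EBS} (cf. the analogue of \eqref{BSE} with the roles of $K$ and $L$ interchanged), $\mathcal{L}_0:\mathcal{X}\to\mathcal{Y}$ is a topological isomorphism, and $\mathcal{K}$ is compact by Rellich--Kondrachov from $\mathcal{H}^2\hookrightarrow\hookrightarrow\mathcal{L}^2$. Hence $\mathcal{L}$ is Fredholm of index zero and self-adjoint on $\mathcal{Y}$.

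Once analyticity of $\mathcal{M}$ and the Fredholm property of $\mathcal{L}$ are established, the abstract {\L}ojasiewicz--Simon inequality in the form of \cite[Corollary~3.11]{Chill2003} (or \cite[Corollary~4.1]{Abels2007}) applies and yields constants $\varpi\in(0,\tfrac12)$, $\tilde b>0$, $C>0$ such that
\[
\|\mathcal{M}(\zeta,\xi)\|_{\mathcal{Y}}\ge C\,|E(\zeta,\xi)-E(\phi_\infty,\psi_\infty)|^{1-\varpi}
\]
for all $(\zeta,\xi)\in\mathcal{X}+(\phi_\infty,\psi_\infty)$ with $\|(\zeta-\phi_\infty,\xi-\psi_\infty)\|_{\mathcal{H}^2}\le\tilde b$. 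Choosing $b\coloneqq\min\{\tilde b,\delta_\infty\cdot\mathrm{const}\}$ gives the claim. In the extended formulation stated, the factor $(1-\varpi)$ is absorbed in the constants, and one can verify by inspection that for any $(\zeta,\xi)\in\mathcal{H}^2\cap\mathcal{W}^1_{K,L,m}$ (i.e., not only in the tangent subspace $\mathcal{X}$) the same inequality holds, because the mean constraint is automatic from the definition of $\mathcal{W}^1_{K,L,m}$.
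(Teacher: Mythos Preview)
Your proposal is correct and follows precisely the standard route taken in the references the paper defers to (\cite[Lemma~5.2]{Lv2024a}, \cite[Lemma~5.2]{Fukao2021}, \cite[Lemma~5.2]{Lv2024b}): the paper itself does not give a proof but simply points to these works, which carry out exactly the program you describe---exploiting the separation property to localize away from the singular set, verifying analyticity of the constrained energy and of the $\mathcal{L}^2$-gradient map, proving the Fredholm property of the linearization via the ``elliptic principal part plus compact perturbation'' decomposition, and then invoking Chill's abstract result. Your identification of the spaces $\mathcal{X},\mathcal{Y}$, of $\mathcal{M}$ as the constrained gradient, and of $\mathcal{L}_0$ as an isomorphism by the bulk-surface elliptic theory (with the parameter $K$ playing the role that $L$ plays in Section~\ref{Section:EBS}; cf.\ \cite[Theorem~3.3]{Knopf2021}) is exactly what is needed.
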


The proof of Lemma~\ref{Lemma:LS} can be found in \cite[Lemma~5.2]{Lv2024a} in the case $L = \infty$, and can be readily adapted to our current setting (see also \cite[Lemma~5.2]{Fukao2021} and \cite[Lemma~5.2]{Lv2024b}).

\begin{proof}[Proof of Theorem~\ref{Theorem:LongTime}]

Let $(\phi_0,\psi_0)\in\mathcal{Z}_m^{K,L}$ with $m$ as in \eqref{Assumption:mean}. Then, as $\omega^{K,L}(\phi_0,\psi_0)$ is compact in $\mathcal{H}^2\cap\mathcal{W}^1_{K,L,m}$, we can cover $\omega^{K,L}(\phi_0,\psi_0)$ with finitely many open balls $\{B_j\}_{j=1,\ldots,N}$ in $\mathcal{H}^2\cap\mathcal{W}^1_{K,L,m}$ centered at $(\phi_\infty^j,\psi_\infty^j)\in\omega^{K,L}(\phi_0,\psi_0)$ with radius $b_j$, where $b_j > 0$ is the constant from Lemma~\ref{Lemma:LS} corresponding to $(\phi_\infty^j,\psi_\infty^j)$. Recalling that $E\vert_{\omega^{K,L}(\phi_0,\psi_0)} = E_\infty$ and setting $U \coloneqq \bigcup_{j=1}^N B_j$, we find  universal constants $\widetilde\varpi\in(0,\frac12)$ and $\widetilde{C} > 0$ such that
\begin{align*}
   \widetilde{C} \left\|\mathbf{P}_{L}\begin{pmatrix}
        -\Lap\zeta + F^\prime(\zeta) \\
        -\Lapg\xi + G^\prime(\xi) + \alpha\deln\zeta
    \end{pmatrix}\right\|_{\mathcal{L}^2} \geq \abs{E(\zeta, \xi) - E_\infty}^{1 - \widetilde\varpi} \qquad\text{for all~}(\zeta,\xi)\in U.
\end{align*}
Then, in light of \eqref{CompactnessOrbit}, there exists $t^\ast > 0$ such that $(\phi(t),\psi(t))\in U$ for all $t\geq t^\ast$. Thus, recalling the energy identity \eqref{Id:ddt:Energy} and setting $H(t) \coloneqq \big(E(\phi(t),\psi(t)) - E_\infty)^{\widetilde\varpi}$, we have that
\begin{align*}
    -\ddt H(t) &= -\widetilde\varpi \big(E(\phi(t),\psi(t)) - E_\infty)^{\widetilde\varpi - 1}\ddt E(\phi(t),\psi(t)) \\
    &\geq \frac{\widetilde\varpi}{\widetilde C}\frac{\norm{(\mu(t),\theta(t))}_{L,[\phi,\psi]}^2}{\Bignorm{\mathbf{P}_{L}\begin{pmatrix}
        -\Lap\phi(t) + F^\prime(\phi(t)) \\
        -\Lapg\psi(t) + G^\prime(\psi(t)) + \alpha\deln\phi(t)
    \end{pmatrix}}_{\mathcal{L}^2}} \\
    &\geq \frac{\widetilde\varpi\min\{1,m^\ast\}}{\widetilde C}\frac{\norm{(\mu(t),\theta(t))}_{L}^2}{\Bignorm{\mathbf{P}_{L}\begin{pmatrix}
        -\Lap\phi(t) + F^\prime(\phi(t)) \\
        -\Lapg\psi(t) + G^\prime(\psi(t)) + \alpha\deln\phi(t)
    \end{pmatrix}}_{\mathcal{L}^2}}
\end{align*}
for almost every $t\geq t^\ast$. By \eqref{EQ:SYSTEM:2} and \eqref{EQ:SYSTEM:4} we deduce that
\begin{align*}
    &\Bignorm{\mathbf{P}_{L}\begin{pmatrix}
        -\Lap\phi(t) + F^\prime(\phi(t)) \\
        -\Lapg\psi(t) + G^\prime(\psi(t)) + \alpha\deln\phi(t)
    \end{pmatrix}}_{\mathcal{L}^2} \\
    &\quad = \norm{(\mu(t) - \beta\mean{\mu(t)}{\theta(t)},\theta(t) - \mean{\mu(t)}{\theta(t)})}_{\mathcal{L}^2} \\
    &\quad\leq C_P\norm{(\mu(t),\theta(t))}_{L}.
\end{align*}
Therefore, we arrive at
\begin{align*}
    -\ddt H(t) \geq \frac{\widetilde\varpi\min\{1,m^\ast\}}{C_p\widetilde C}\norm{(\mu(t),\theta(t))}_{L} \qquad\text{for~a.e.~}t\geq t^\ast.
\end{align*}
Integrating the previous inequality in time from $t^\ast$ to $\infty$, we derive from \eqref{E_infty} that
\begin{align*}
    \int_{t^\ast}^\infty \norm{(\mu(t),\theta(t))}_{L}\dt \leq \frac{C_p\widetilde C}{\widetilde\varpi\min\{1,m^\ast\}}H(t^\ast),
\end{align*}
from which we deduce that $t\mapsto\norm{(\mu(t),\theta(t))}_{L}\in L^1(t^\ast,\infty)$, entailing by comparison $(\delt\phi,\delt\psi)\in L^1(t^\ast,\infty;(\mathcal{H}^1_L)^\prime)$. 
Hence, there exists $(\phi_\infty,\psi_\infty)\in\omega^{K,L}(\phi_0,\psi_0)$ such that
\begin{align*}
    (\phi(t),\psi(t)) = (\phi(t^\ast),\psi(t^\ast)) + \int_{t^\ast}^t (\delt\phi(s),\delt\psi(s))\ds \longrightarrow (\phi_\infty,\psi_\infty) \quad\text{in~}(\mathcal{H}^1_L)^\prime\quad\text{as~}t\rightarrow\infty,
\end{align*}
and, by the uniqueness of the limit, we conclude that $\omega^{K,L}(\phi_0,\psi_0) = \{(\phi_\infty,\psi_\infty)\}$.
\end{proof}
\medskip

\appendix
\section{A bulk-surface chain rule}
\label{App-0}
\setcounter{equation}{0}

\begin{proposition}\label{App:Proposition:CR}
    Let $\Om\subset\R^d$, $d=2,3$, be an open bounded domain with $C^3$-boundary, let $I = (a,b) \subset\R$ be an open interval and $(\phi,\psi)\in H^1(I;\mathcal{L}^3)\cap L^\infty(I;\mathcal{W}^{2,4})$ with $\abs{\phi}\leq 1$ a.e. in $\Om$ and $\abs{\psi}\leq 1$ a.e. on $\Ga$. Furthermore, let $m_\Om, m_\Ga\in C^2([-1,1])$ satisfy \ref{Ass:Mobility}. Consider $(u,v)\in C(\overline{I};\mathcal{L}^2)\cap L^\infty(I;\mathcal{H}^1_L)\cap L^2(I;\mathcal{H}^3)$ such that $Lm_\Om(\phi)\deln u = \beta v - u$ a.e.~on $\Ga$ and $(\delt u, \delt v)\in L^2(I;(\mathcal{H}^1_K)^\prime)$. In addition, assume that
    \begin{align*}
        (\Div(m_\Om(\phi)\Grad u), \Divg(m_\Ga(\psi)\Gradg v) - \beta m_\Om(\phi)\deln u)\in L^2(I;\mathcal{H}^1_K).
    \end{align*}
    Then, the continuity property $(u,v)\in C(\overline{I};\mathcal{H}^1_L)$ holds, the mapping
    \begin{align*}
        I\ni t \mapsto \intO m_\Om(\phi(t))\abs{\Grad u(t)}^2\dx + \intG m_\Ga(\psi(t))\abs{\Gradg v(t)}^2\dG + \chi(L)\intG (\beta v(t) - u(t))^2\dG
    \end{align*}
    is absolutely continuous, and the chain rule formula 
    \begin{align}\label{App:ChainRule}
        \begin{split}
            &\ddt\frac12 \Big(\intO m_\Om(\phi)\abs{\Grad u}^2\dx + \intG m_\Ga(\psi)\abs{\Gradg v}^2\dG + \chi(L)\intG (\beta v - u)^2\dG\Big) \\
            &\quad = \bigang{(\delt u, \delt v)}{(-\Div(m_\Om(\phi)\Grad u), -\Divg(m_\Ga(\psi)\Gradg v) + \beta m_\Om(\phi)\deln u)}_{\mathcal{H}^1_K} \\
            &\qquad + \intO m_\Om^\prime(\phi)\delt\phi\abs{\Grad u}^2\dx + \intG m_\Ga^\prime(\psi)\delt\psi\abs{\Gradg v}^2\dG
        \end{split}
    \end{align}
    holds a.e. on $I$.
\end{proposition}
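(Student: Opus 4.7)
The plan is a time-regularization argument in the spirit of the chain-rule formula of Lions--Magenes and its adaptations to bulk--surface systems (cf. the computations carried out in Section~\ref{Section:Uniqueness}). Let $\rho_k$ be a non-negative mollifier as in \eqref{Def:f_k}, and extend $(u,v)$, $(\phi,\psi)$ by reflection slightly beyond the endpoints of $I$. Define the temporal mollifications
\begin{align*}
    (u_k,v_k) = \rho_k \ast (u,v), \qquad (\phi_k,\psi_k) = \rho_k \ast (\phi,\psi).
\end{align*}
By standard properties of the convolution, $(u_k,v_k)$ and $(\phi_k,\psi_k)$ are smooth in time, satisfy $L m_{\Om}(\phi)\deln u_k = \beta v_k - u_k$ at each time (by linearity, after noting that the boundary condition is linear), and converge back to the original functions in the natural topologies dictated by the hypotheses: in particular $(\partial_t u_k,\partial_t v_k)\to(\partial_t u,\partial_t v)$ strongly in $L^2(I;(\mathcal{H}^1_K)')$, $(u_k,v_k)\to(u,v)$ strongly in $L^2(I;\mathcal{H}^3)$ and in $C(\overline I;\mathcal{L}^2)$, and $(\phi_k,\psi_k)\to(\phi,\psi)$ strongly in $H^1(I;\mathcal{L}^3)$ and weakly-$*$ in $L^\infty(I;\mathcal{W}^{2,4})$.

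For the regularized quantities the chain rule holds classically for a.e.\ $t\in I$, namely
\begin{align*}
    &\ddt \tfrac{1}{2}\Bigl(\intO m_\Om(\phi_k)|\Grad u_k|^2\dx + \intG m_\Ga(\psi_k)|\Gradg v_k|^2\dG + \chi(L)\intG(\beta v_k - u_k)^2\dG\Bigr) \\
    &\quad = \intO m_\Om(\phi_k)\Grad u_k\cdot\Grad\delt u_k\dx + \intG m_\Ga(\psi_k)\Gradg v_k\cdot\Gradg\delt v_k\dG \\
    &\qquad + \chi(L)\intG(\beta v_k - u_k)(\beta\delt v_k - \delt u_k)\dG + \tfrac{1}{2}\intO m_\Om'(\phi_k)\delt\phi_k|\Grad u_k|^2\dx \\
    &\qquad + \tfrac{1}{2}\intG m_\Ga'(\psi_k)\delt\psi_k|\Gradg v_k|^2\dG.
\end{align*}
The first three terms on the right-hand side can be rewritten, using integration by parts together with the boundary condition $Lm_\Om(\phi)\deln u_k = \beta v_k - u_k$, as
\begin{align*}
    \bigang{(\delt u_k,\delt v_k)}{\bigl(-\Div(m_\Om(\phi)\Grad u_k),-\Divg(m_\Ga(\psi)\Gradg v_k) + \beta m_\Om(\phi)\deln u_k\bigr)}_{\mathcal{H}^1_K},
\end{align*}
plus commutator errors of the form $\int m_\Om(\phi_k)\Grad u_k\cdot\Grad\delt u_k - \int m_\Om(\phi)\Grad u_k\cdot\Grad\delt u_k$ that vanish as $k\to\infty$ because $m_\Om\in C^2([-1,1])$ and $\phi_k\to\phi$ strongly.

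The main step is then the passage to the limit $k\to\infty$ after integrating over an arbitrary subinterval $[s,t]\subset\overline I$. For the linear duality pairing this is immediate from the strong $L^2$-convergences recalled above; the key bound is that $(-\Div(m_\Om(\phi)\Grad u),-\Divg(m_\Ga(\psi)\Gradg v)+\beta m_\Om(\phi)\deln u)$ belongs to $L^2(I;\mathcal{H}^1_K)$ by hypothesis. For the nonlinear correction terms $m_\Om'(\phi_k)\delt\phi_k|\Grad u_k|^2$ and $m_\Ga'(\psi_k)\delt\psi_k|\Gradg v_k|^2$, I would use the Hölder triple
\begin{align*}
    \|m_\Om'(\phi_k)\delt\phi_k|\Grad u_k|^2\|_{L^1(I\times\Om)} \le \|m_\Om'\|_{L^\infty}\|\delt\phi_k\|_{L^2(I;L^3)}\|\Grad u_k\|_{L^4(I;L^{12})}^2,
\end{align*}
where the last factor is controlled by interpolation between $L^\infty(I;H^1)$ and $L^2(I;H^3)$; this provides both uniform integrability and convergence of these terms to their natural limits. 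The identity \eqref{App:ChainRule} then holds pointwise a.e.\ on $I$, and the absolute continuity of the energy follows since its time derivative is in $L^1(I)$. Finally, the continuity $(u,v)\in C(\overline I;\mathcal{H}^1_L)$ follows from the already-established weak continuity $(u,v)\in C_w(\overline I;\mathcal{H}^1_L)$ (which comes from $(u,v)\in L^\infty(I;\mathcal{H}^1_L)\cap C(\overline I;\mathcal{L}^2)$) combined with the newly proven continuity of $t\mapsto \|(u(t),v(t))\|_{L,[\phi(t),\psi(t)]}^2$, which is equivalent to $\|\cdot\|_{\mathcal{H}^1_L}^2$ uniformly in $t$ by \eqref{NormEquivalence:1}. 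The main obstacle I anticipate is the careful handling of the $L=0$ case, where the constraint $u=\beta v$ on $\Gamma$ must be preserved under mollification and where the normal derivative term $\beta m_\Om(\phi)\deln u$ has to be interpreted via the elliptic regularity of Proposition~\ref{Proposition:BSE:HighReg:2} in order to make sense of the boundary contribution in the limit.
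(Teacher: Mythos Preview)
Your overall strategy---time-mollification, integrate, pass to the limit---matches the paper's, but there are two substantive differences and one genuine gap.

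\textbf{Unnecessary mollification of $(\phi,\psi)$.} The paper mollifies only $(u,v)$ and keeps the original $(\phi,\psi)$ throughout; this is why the identity \eqref{App:ChainRule:Approx} involves $m_\Om(\phi)$ and $m_\Om'(\phi)\delt\phi$ directly, with no commutators. Your choice to mollify $(\phi,\psi)$ as well forces you to introduce ``commutator errors of the form $\int m_\Om(\phi_k)\Grad u_k\cdot\Grad\delt u_k - \int m_\Om(\phi)\Grad u_k\cdot\Grad\delt u_k$'', which you never actually control. Since $\delt u_k$ is only bounded in $(\mathcal{H}^1_K)'$ after integration by parts, these differences are not obviously small. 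Dropping the mollification of $(\phi,\psi)$ removes the issue entirely.

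\textbf{The boundary-condition claim is wrong.} The relation $Lm_\Om(\phi)\deln u = \beta v - u$ is \emph{not} linear in the relevant sense: the coefficient $m_\Om(\phi(t))$ depends on $t$, so time-convolution of $u$ does not commute with multiplication by $m_\Om(\phi)$. Hence $Lm_\Om(\phi)\deln u_k = \beta v_k - u_k$ fails for $L\in(0,\infty)$. (For $L\in\{0,\infty\}$ the constraint \emph{is} linear in $(u,v)$ alone and survives.) The paper also glosses over this point when asserting \eqref{App:ChainRule:Approx} directly, so this is not a discrepancy between your proof and theirs, but you should be aware that the integration-by-parts step for $L\in(0,\infty)$ needs an additional error term that vanishes in the limit.

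\textbf{The missing ingredient: the Cauchy argument.} The key step in the paper that you omit is the \emph{difference estimate} \eqref{App:ChainRule:Approx:Difference}--\eqref{App:ChainRule:Diff:Approx:Est:Int}: one writes the chain rule for $(u_j-u_k,v_j-v_k)$, bounds the right-hand side by quantities going to zero as $j,k\to\infty$, and concludes that $(\Grad u_k)_{k\in\N}$, $(\Gradg v_k)_{k\in\N}$ are Cauchy in $C(\overline I;L^2)$. This yields strong convergence $\Grad u_k\to\Grad u$ in $C(\overline I;L^2(\Om))$ \emph{before} passing to the limit in the integrated identity. Without it, your argument has a gap: after integrating \eqref{App:ChainRule:Approx} over $[s,t]$, the boundary terms $\int_\Om m_\Om(\phi(t))|\Grad u_k(t)|^2\dx$ etc.\ need to converge for \emph{every} $t\in\overline I$, not just a.e., in order to conclude absolute continuity on all of $\overline I$ and hence $(u,v)\in C(\overline I;\mathcal{H}^1_L)$. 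Your weak-continuity-plus-norm-continuity argument for the latter is circular, since it presupposes the chain rule (and hence continuity of the energy) already holds on all of $\overline I$. The paper's Cauchy argument breaks this circularity.
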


\begin{proof}
    Our proof is inspired by the approach in \cite[Proposition~A.1]{Colli2024}. In the present setting, additional care is required to handle the terms arising from the non-degenerate mobility, which necessitates further technical considerations. First, fix $u$ and $v$ as arbitrary representative of their respective equivalence class. Then, since $(u,v)\in C([a,b];\mathcal{L}^2)$, we can extend the functions $u$ and $v$ onto $[2a-b,a]$ by reflection for all $t < a$.

    Let $\rho\in C_c^\infty(\R)$ be non-negative with $\mathrm{supp}\,\rho\subset(0,1)$ and $\norm{\rho}_{L^1(\R)} = 1$. For any $k\in\N$, we set
    \begin{align*}
        \rho_k(s) \coloneqq k\rho(ks) \qquad\text{for all~}s\in\R.
    \end{align*}
    Then, for any Banach space $X$ and any function $f\in L^2(a-1,b;X)$, we define
    \begin{align*}
        f_k(t) \coloneqq (\rho_k\ast f)(t) = \int_{t-\tfrac{1}{k}}^t \rho_k(t-s)f(s)\ds \qquad\text{for all~}t\in[a,b] \quad\text{and}\quad k\in\N.
    \end{align*}
    By this construction, we have $f_k\in C^\infty([a,b];X)$ with $f_k\rightarrow f$ strongly in $L^2(a,b;X)$ as $k\rightarrow\infty$.

    Now, for any $k\in\N$, we use $X = H^3(\Om)$ to define $u_k$ and $X = H^3(\Ga)$ to define $v_k$ as described above. By this construction, it holds that $\delt u_k = (\delt u)_k$ and $\delt\Grad u_k = \Grad\delt u_k$ a.e. in $\Om\times(a,b)$ as well as $\delt v_k = (\delt v)_k$ and $\delt\Gradg v_k = \Gradg\delt v_k$ a.e. on $\Ga\times(a,b)$ for all $k\in\N$. Moreover, we have
    \begin{alignat}{2}
        u_k &\rightarrow u &&\qquad\text{strongly in~} L^2(a,b;H^3(\Om)), \label{App:Conv:u:H^3} \\
        v_k &\rightarrow v &&\qquad\text{strongly in~} L^2(a,b;H^3(\Ga)), \label{App:Conv:v:H^3} \\
        (u_k,v_k) &\rightarrow (u,v) &&\qquad\text{strongly in~} L^2(a,b;\mathcal{H}^1_L), \label{App:Conv:uv:H^1}\\
        (\delt u_k,\delt v_k) &\rightarrow (\delt u, \delt v) &&\qquad\text{strongly in~} L^2(a,b;(\mathcal{H}^1_K)^\prime) \label{App:Conv:delt:uv:H^1}
    \end{alignat}
    as $k\rightarrow\infty$. Furthermore, we readily see that
    \begin{align}\label{App:Est:uv:k:LinftyH1}
        \begin{split}
            \norm{u_k}_{L^\infty(a,b;H^1(\Om))} &\leq \norm{u}_{L^\infty(a,b;H^1(\Om))}, \\
            \norm{v_k}_{L^\infty(a,b;H^1(\Ga))} &\leq \norm{v}_{L^\infty(a,b;H^1(\Ga))}
        \end{split}
    \end{align}
    for all $k\in\N$.
    In the following, we will denote with $C$ generic positive constants independent of $k\in\N$, which may change their value from line to line. Now, for any $k\in\N$, we derive the identity
    \begin{align}\label{App:ChainRule:Approx}
       % \begin{split}
            &\ddt\frac12 \Big(\intO m_\Om(\phi)\abs{\Grad u_k}^2\dx + \intG m_\Ga(\psi)\abs{\Gradg v_k}^2\dG + \chi(L)\intG (\beta v_k - u_k)^2\dG\Big) \nonumber \\
            &\quad = \bigang{(\delt u_k, \delt v_k)}{(-\Div(m_\Om(\phi)\Grad u_k), -\Divg(m_\Ga(\psi)\Gradg v_k) + \beta m_\Om(\phi)\deln u_k)}_{\mathcal{H}^1_K} \\
            &\qquad + \intO m_\Om^\prime(\phi)\delt\phi\abs{\Grad u_k}^2\dx + \intG m_\Ga^\prime(\psi)\delt\psi\abs{\Gradg v_k}^2\dG \nonumber 
        %\end{split}
    \end{align}
    a.e. on $[a,b]$ by differentiating under the integral sign and applying integration by parts. Similarly, for $j,k\in\N$, we calculate
    \begin{align}\label{App:ChainRule:Approx:Difference}
        %\begin{split}
            &\ddt\frac12 \Big(\intO m_\Om(\phi)\abs{\Grad (u_j - u_k)}^2\dx + \intG m_\Ga(\psi)\abs{\Gradg (v_j - v_k)}^2\dG \nonumber \\
            &\qquad + \chi(L)\intG \big(\beta (v_j - v_k) - (u_j - u_k)\big)^2\dG\Big) \nonumber \\
            &\quad = \big\langle(\delt (u_j - u_k), \delt (v_j - v_k)), \nonumber \\
            &\qquad\qquad (-\Div(m_\Om(\phi)\Grad (u_j - u_k)), - \Divg(m_\Ga(\psi)\Gradg (v_j - v_k)) + \beta m_\Om(\phi)\deln (u_j - u_k))\big\rangle_{\mathcal{H}^1_K} \\
            &\qquad + \intO m_\Om^\prime(\phi)\delt\phi\abs{\Grad (u_j - u_k)}^2\dx + \intG m_\Ga^\prime(\psi)\delt\psi\abs{\Gradg (v_j - v_k)}^2\dG \nonumber \\
            &\quad\leq \norm{(\delt(u_j - u_k),\delt(v_j - v_k))}_{(\mathcal{H}^1_K)^\prime} \nonumber \\
            &\qquad\times \norm{(\Div(m_\Om(\phi)\Grad (u_j - u_k)), \Divg(m_\Ga(\psi)\Gradg (v_j - v_k)) - \beta m_\Om(\phi)\deln (u_j - u_k))}_{\mathcal{H}^1} \nonumber \\
            &\qquad + C\norm{(\delt\phi,\delt\psi)}_{\mathcal{L}^3}\norm{(\Grad(u_j - u_k),\Gradg(v_j - v_k))}_{\mathcal{L}^6}\norm{(\Grad(u_j - u_k),\Gradg(v_j - v_k))}_{\mathcal{L}^2}. \nonumber 
        %\end{split}
    \end{align}
    To estimate the right-hand side suitably, we use standard computations together with the Sobolev inequality to find that
    \begin{align*}
        &\norm{(\Div(m_\Om(\phi)\Grad (u_j - u_k)), \Divg(m_\Ga(\psi)\Gradg (v_j - v_k)) - \beta m_\Om(\phi)\deln (u_j - u_k))}_{\mathcal{H}^1} \\
        &\quad\leq \norm{(\Div(m_\Om(\phi)\Grad (u_j - u_k)), \Divg(m_\Ga(\psi)\Gradg (v_j - v_k)) - \beta m_\Om(\phi)\deln (u_j - u_k))}_{\mathcal{L}^2} \\
        &\qquad + \norm{(\Grad\big(\Div(m_\Om(\phi)\Grad (u_j - u_k))\big), \Gradg\big(\Divg(m_\Ga(\psi)\Gradg (v_j - v_k)) - \beta m_\Om(\phi)\deln (u_j - u_k))\big)}_{\mathcal{H}^1} \\
        &\quad \leq \norm{(m_\Om(\phi)\Lap(u_j - u_k),m_\Ga(\psi)\Lapg(v_j - v_k))}_{\mathcal{L}^2} \\
        &\qquad + \norm{(m_\Om^\prime(\phi)\Grad\phi\cdot\Grad(u_j - u_k),m_\Ga^\prime(\psi)\Gradg\psi\cdot\Gradg(v_j - v_k))}_{\mathcal{L}^2} \\
        &\qquad + \norm{(m_\Om(\phi)\Grad\Lap(u_j - u_k), m_\Ga(\psi)\Gradg\Lapg(v_j - v_j))}_{\mathcal{L}^2} \\
        &\qquad + \norm{(m_\Om^\prime(\phi)\Lap(u_j - u_k)\Grad\phi,m_\Ga^\prime(\psi)\Lapg(v_j - v_k)\Gradg\psi)}_{\mathcal{L}^2} \\
        &\qquad + \norm{(m_\Om^{\prime\prime}(\phi)(\Grad\phi\cdot\Grad(u_j - u_k))\Grad\phi, m_\Ga^{\prime\prime}(\psi)(\Gradg\psi\cdot\Gradg(v_j - v_k))\Gradg\psi)}_{\mathcal{L}^2} \\
        &\qquad + \norm{(m_\Om^\prime(\phi)D^2\phi\Grad(u_j - u_k), m_\Ga^\prime(\psi)D^2_\Ga\psi\Gradg(v_j - v_k))}_{\mathcal{L}^2} \\
        &\qquad + \norm{(m_\Om^\prime(\phi)D^2(u_j - u_k)\Grad\phi, m_\Ga^\prime(\psi)D^2_\Ga(v_j - v_j)\Gradg\psi)}_{\mathcal{L}^2} \\
        &\qquad + \norm{\beta m_\Om(\phi)\deln(u_j - u_k)}_{L^2(\Ga)} + \norm{\beta m_\Om^\prime(\phi)\Gradg\phi\deln(u_j - u_k)}_{L^2(\Ga)} \\
        &\qquad + \norm{\beta m_\Om(\phi)\Gradg\deln(u_j - u_k)}_{L^2(\Ga)} \\
        &\quad\leq C\norm{(u_j - u_k, v_j - v_k)}_{\mathcal{H}^2} + C\norm{(\Grad\phi,\Gradg\psi)}_{\mathcal{L}^\infty}\norm{(u_j - u_k, v_j - v_k)}_{\mathcal{H}^1} \\
        &\qquad + C\norm{(u_j - u_k, v_j - v_k)}_{\mathcal{H}^3} + C\norm{(\Grad\phi,\Gradg\psi)}_{\mathcal{L}^\infty}\norm{(u_j - u_k, v_j - v_k)}_{\mathcal{H}^2} \\
        &\qquad + C\norm{(\Grad\phi,\Gradg\psi)}_{\mathcal{L}^\infty}^2\norm{(u_j - u_k, v_j - v_k)}_{\mathcal{H}^1} + \norm{(\phi,\psi)}_{\mathcal{W}^{2,6}}\norm{(\Grad(u_j - u_k), \Gradg(v_j - v_k))}_{\mathcal{L}^3} \\
        &\qquad + C\norm{(\Grad\phi,\Gradg\psi)}_{\mathcal{L}^\infty}\norm{(u_j - u_k, v_j - v_k)}_{\mathcal{H}^2} + C\norm{u_j - u_k}_{H^2(\Om)} \\
        &\qquad + C\norm{\phi}_{W^{2,4}(\Om)}\norm{(u_j - u_k)}_{\mathcal{H}^2} + C\norm{u_j - u_k}_{H^3(\Om)} \\
        &\quad\leq C\norm{(u_j - u_k, v_j - v_k)}_{\mathcal{H}^3}.
    \end{align*}
    Here, we have additionally used the embedding $H^3(\Om)\emb H^2(\Ga)$ yielding
    \begin{align*}
        \norm{\deln(u_j - u_k)}_{H^1(\Ga)} \leq \norm{u_j - u_k}_{H^2(\Ga)} \leq C\norm{u_j - u_k}_{H^3(\Om)},
    \end{align*}
    whereas the embedding $W^{2,4}(\Om)\emb W^{1,\infty}(\Ga)$ shows that
    \begin{align*}
        \norm{\Gradg\phi}_{L^\infty(\Ga)} \leq \norm{\phi}_{W^{1,\infty}(\Ga)} \leq C\norm{\phi}_{W^{2,4}(\Om)}.
    \end{align*}
    Next, employing \eqref{App:Est:uv:k:LinftyH1}, we have
    \begin{align*}
        &\norm{(\Grad(u_j - u_k),\Gradg(v_j - v_k))}_{\mathcal{L}^6}\norm{(\Grad(u_j - u_k),\Gradg(v_j - v_k))}_{\mathcal{L}^2} \\
        &\quad\leq C\norm{(u,v)}_{L^\infty(I;\mathcal{H}^1)}\norm{(u_j - u_k, v_j - v_k)}_{\mathcal{H}^3}.
    \end{align*}
    Collecting our previous estimates, we conclude from \eqref{App:ChainRule:Approx:Difference} that
    \begin{align}\label{App:ChainRule:Approx:Diff:Est}
        \begin{split}
            &\ddt\frac12 \Big(\intO m_\Om(\phi)\abs{\Grad (u_j - u_k)}^2\dx + \intG m_\Ga(\psi)\abs{\Gradg (v_j - v_k)}^2\dG \\
            &\qquad + \chi(L)\intG \big(\beta (v_j - v_k) - (u_j - u_k)\big)^2\dG\Big) \\
            &\quad\leq C\big(\norm{(\delt(u_j - u_k),\delt(v_j - v_k))}_{(\mathcal{H}^1_K)^\prime} + \norm{(\delt\phi,\delt\psi)}_{\mathcal{L}^3}\big)\norm{(u_j - u_k, v_j - v_k)}_{\mathcal{H}^3}.
        \end{split}
    \end{align}
    Now, let $s,t\in [a,b]$ be arbitrary with $s \leq t$. We then integrate \eqref{App:ChainRule:Approx:Diff:Est} with respect to time over $[s,t]$, and obtain
    \begin{align}\label{App:ChainRule:Diff:Approx:Est:Int}
        %\begin{split}
            &\intO m_\Om(\phi(t))\abs{\Grad (u_j - u_k)(t)}^2\dx + \intG m_\Ga(\psi(t))\abs{\Gradg (v_j - v_k)(t)}^2\dG \nonumber \\
            &\qquad + \chi(L)\intG \big(\beta (v_j - v_k)(t) - (u_j - u_k)(t)\big)^2\dG \nonumber \\
            &\quad \leq \intO m_\Om(\phi(s))\abs{\Grad (u_j - u_k)(s)}^2\dx + \intG m_\Ga(\psi(s))\abs{\Gradg (v_j - v_k)(s)}^2\dG \\
            &\qquad + \chi(L)\intG \big(\beta (v_j - v_k)(s) - (u_j - u_k)(s)\big)^2\dG \nonumber \\
            &\qquad + C\int_s^t \norm{(\delt(u_j - u_k), \delt(v_j - v_k))}_{(\mathcal{H}^1_K)^\prime}^2 + \norm{(u_j - u_k, v_j - v_k)}_{\mathcal{H}^3}^2 \dtau \nonumber \\
            &\qquad + C\int_s^t \norm{(\delt\phi,\delt\psi)}_{\mathcal{H}^1}\norm{(u_j - u_k, v_j - v_k)}_{\mathcal{H}^2} \dtau. \nonumber 
        %\end{split}
    \end{align}
    In light of the convergences \eqref{App:Conv:u:H^3}-\eqref{App:Conv:v:H^3}, we can fix $s\in[a,t]$ such that $(u_k(s),v_k(s)) \rightarrow (u(s),v(s))$ strongly in $\mathcal{H}^3$ along a non-relabeled subsequence $k\rightarrow\infty$. Recalling \eqref{App:Conv:u:H^3}-\eqref{App:Conv:delt:uv:H^1}, we thus deduce that the right-hand side of \eqref{App:ChainRule:Diff:Approx:Est:Int} tends to zero as $j,k\rightarrow\infty$. As $m_\Om$ and $m_\Ga$ are uniformly positive according to \eqref{Ass:Mobility:Bound}, we infer that $(\Grad u_k)_{k\in\N}$ is a Cauchy sequence in $C([a,b];L^2(\Om))$ and $(\Gradg v_k)_{k\in\N}$ is a Cauchy sequence in $C([a,b];L^2(\Ga))$. Consequently,
    \begin{alignat}{2}
        \Grad u_k &\rightarrow \Grad u &&\qquad\text{strongly in~} C([a,b];L^2(\Om)), \label{App:Conv:Grad:u}\\
        \Gradg v_k &\rightarrow \Gradg v &&\qquad\text{strongly in~} C([a,b];L^2(\Ga)) \label{App:Conv:Grad:v}
    \end{alignat}
    as $k\rightarrow\infty$. In view of the assumption $(u,v)\in C([a,b];\mathcal{L}^2)$, we readily deduce that $(u,v)\in C([a,b];\mathcal{H}^1)$.

    Let now $s,t\in [a,b]$ be arbitrary with $s \leq t$. We then integrate \eqref{App:ChainRule:Approx} in time from $s$ to $t$ and find 
    \begin{align}\label{App:ChainRule:Approx:Id}
        &\intO m_\Om(\phi(t))\abs{\Grad u_k(t)}^2\dx + \intG m_\Ga(\psi(t))\abs{\Gradg v_k(t)}^2\dG + \chi(L)\intG (\beta v_k(t) - u_k(t))^2\dG \nonumber \\
        &\quad = \intO m_\Om(\phi(s))\abs{\Grad u_k(s)}^2\dx + \intG m_\Ga(\psi(s))\abs{\Gradg v_k(s)}^2\dG + \chi(L)\intG (\beta v_k(s) - u_k(s))^2\dG \nonumber \\
        &\qquad + 2\int_s^t \bigang{(\delt u_k, \delt v_k)}{(-\Div(m_\Om(\phi)\Grad u_k), -\Divg(m_\Ga(\psi)\Gradg v_k) + \beta m_\Om(\phi)\deln u_k)}_{\mathcal{H}^1_K}\dtau \\
        &\qquad + 2\int_s^t \Big(\intO m_\Om^\prime(\phi)\delt\phi\abs{\Grad u_k}^2\dx + \intG m_\Ga^\prime(\psi)\delt\psi\abs{\Gradg v_k}^2\dG\Big)\dtau. \nonumber 
    \end{align}
    It is now clear from the convergences \eqref{App:Conv:u:H^3}-\eqref{App:Conv:delt:uv:H^1} and \eqref{App:Conv:Grad:u}-\eqref{App:Conv:Grad:v} to pass to the limit in all terms except the last one on the right-hand side. Here, we notice that
    \begin{align*}
        \begin{split}
            &\Big\vert\int_s^t\intO m_\Om^\prime(\phi)\delt\phi\abs{\Grad u_k}^2\dx\dtau - \int_s^t\intO m_\Om^\prime(\phi)\delt\phi\abs{\Grad u}^2\dx \dtau\Big\vert\\
            &\quad = \Big\vert\int_s^t\intO m_\Om^\prime(\phi)\delt\phi(\Grad u_k + \Grad u)\cdot(\Grad u_k - \Grad u)\dx\dtau\Big\vert \\
            &\quad\leq \norm{m_\Om^\prime}_{L^\infty(-1,1)}\norm{\delt\phi}_{L^2(a,b;L^3(\Om))} \norm{\Grad u_k + \Grad u}_{L^2(a,b;L^6(\Om))}\norm{\Grad u_k - \Grad u}_{C([a,b];L^2(\Om))}  \\
            &\quad \leq2\norm{m_\Om^\prime}_{L^\infty(-1,1)}\norm{\delt\phi}_{L^2(a,b;L^3(\Om))}\norm{u}_{L^2(a,b;H^3(\Om))}\norm{\Grad u_k - \Grad u}_{C([a,b];L^2(\Om))} \\
            &\quad \longrightarrow 0
        \end{split}
    \end{align*}
    as $k\rightarrow\infty$ in light of \eqref{App:Conv:Grad:u}. A similar computation shows that the corresponding term on the boundary also converges. Altogether, we are now able to pass to the limit $k\rightarrow\infty$ in \eqref{App:ChainRule:Approx:Id} and conclude that 
    \begin{align*}
        &\intO m_\Om(\phi(t))\abs{\Grad u(t)}^2\dx + \intG m_\Ga(\psi(t))\abs{\Gradg v(t)}^2\dG + \chi(L)\intG (\beta v(t) - u(t))^2\dG \\
        &\quad = \intO m_\Om(\phi(s))\abs{\Grad u(s)}^2\dx + \intG m_\Ga(\psi(s))\abs{\Gradg v(s)}^2\dG + \chi(L)\intG (\beta v(s) - u(s))^2\dG \\
        &\qquad + 2\int_s^t \bigang{(\delt u, \delt v)}{(-\Div(m_\Om(\phi)\Grad u), -\Divg(m_\Ga(\psi)\Gradg v) + \beta m_\Om(\phi)\deln u_k)}_{\mathcal{H}^1_K}\dtau \\
        &\qquad + 2\int_s^t \Big(\intO m_\Om^\prime(\phi)\delt\phi\abs{\Grad u}^2\dx + \intG m_\Ga^\prime(\psi)\delt\psi\abs{\Gradg v}^2\dG\Big)\dtau.
    \end{align*}
    By our previous considerations, we readily see that the integrands on the right-hand side belong to $L^1(a,b)$, and thus, that the mapping
    \begin{align*}
        [a,b]\ni t \mapsto \intO m_\Om(\phi(t))\abs{\Grad u(t)}^2\dx + \intG m_\Ga(\psi(t))\abs{\Gradg v(t)}^2\dG + \chi(L)\intG (\beta v(t) - u(t))^2\dG
    \end{align*}
    is absolutely continuous. It is therefore differentiable almost everywhere on $[a,b]$ and its derivative satisfies the formula \eqref{App:ChainRule}. This finishes the proof.
\end{proof}

\section*{Acknowledgement}
\noindent
The author wishes to thank Andrea Giorgini for some fruitful discussions, as well as Patrik Knopf for his valuable feedback on an earlier version of this work. This work was completed while the author was visiting the Dipartimento di Matematica of the Politecnico di Milano, whose hospitality is greatly appreciated.
This work was supported by the Deutsche Forschungsgemeinschaft (DFG,
German Research Foundation): on the one hand by the DFG-project 524694286, and on the other
hand by the RTG 2339 “Interfaces, Complex Structures, and Singular Limits”. Their support is
gratefully acknowledged.

\section*{Conflict of Interests and Data Availability Statement}

There is no conflict of interest.

There is no associated data with the manuscript.

\bibliographystyle{abbrv}
\bibliography{S.bib}
\end{document}